\theoremstyle{thmstyleone}%
\newtheorem{thm}{Theorem}
\newtheorem{prop}[thm]{Proposition}
\newtheorem{cor}[thm]{Corollary}
\newtheorem{lemma}[thm]{Lemma}
\theoremstyle{thmstylethree}
\newtheorem{defn}[thm]{Definition}
\newtheorem{nota}[thm]{Notation}
\newtheorem{example}[thm]{Example}
\def\V{{\mathcal V}}
\begin{document}

\title[Triangulations of Grassmannians and flag manifolds]{Triangulations of Grassmannians and flag manifolds}


\author{\fnm{Olakunle S} \sur{Abawonse}}\email{o.abawonse@northeastern.edu}

\affil{\orgdiv{Department of Mathematics}, \orgname{Northeastern University}, \\ \orgaddress{\street{360 Huntington Ave}, \city{Boston}, \postcode{02115}, \state{Massachusetts}, \country{USA}}}


\abstract{MacPherson in ~\cite{macp:rob} conjectured that the Grassmannian
$\mathrm{Gr}(2, \mathbb{R}^n)$ has the same homeomorphism type as the combinatorial Grassmannian $\mbox{MacP}(2,n)$, while Babson ~\cite{bab:eric} proved that the spaces $\mbox{Gr}(2,\mathbb{R}^n)$ and $\mbox{Gr}(1,2,\mathbb{R}^n)$ are homotopy equivalent to their combinatorial analogs, the simplicial complexes $\|\mbox{MacP}(2,n)\|$ and $\|\mbox{MacP}(1,2,n)\|$ respectively. We will prove that $\mbox{Gr}(2, \mathbb{R}^n)$ and $\mbox{Gr}(1,2, \mathbb{R}^n)$ are homeomorphic to $\|\mbox{MacP}(2,n)\|$ and $\|\mbox{MacP}(1,2,n)\|$ respectively}.

\keywords{Regular cell complex, Oriented matroids, Cell collapse, Topological manifolds, Shellings, Recursive atome ordering}



\maketitle

\section{Introduction}

An oriented matroid can be thought of as a combinatorial abstraction of a vector space, or of a point configuration or of  real hyperplane arrangements. The theory of oriented matroid comes with analogous notion to linear independence, convexity, general position, and subspaces.

Mn\"{e}v and Ziegler in \cite{zieg:mnev} introduced the poset $G(k,\mathcal{M})$ of rank $k$ strong map images of a rank $n$ oriented matroid $\mathcal{M}$ called the \textit{oriented matroid Grassmannian}. The poset was introduced to serve as a combinatorial model for $\mathrm{Gr}(k, \mathbb{R}^n)$, the space of $k$ dimensional subspaces of $\mathbb{R}^n.$ A special case is when $\mathcal{M}$ is the unique rank $n$ oriented matroid on $n$ elements. The resulting poset $\mathrm{MacP}(k, n)$, is called the \textit{MacPhersonian}. Similarly, the poset of flags $(\mathcal{N}_1, \mathcal{N}_2)$ of oriented matroids, where $\mathcal{N}_1$ is a rank $p$ strong map image of $\mathcal{N}_2$ and $\mathcal{N}_2$ is a rank $k$ strong map image of $\mathcal{M}$ is denoted by $G(p, k, \mathcal{M})$. The poset came up in the work of Babson in \cite{bab:eric} and the work of Gelfand and MacPherson in \cite{Gelf:MacP}.

Mn\"{e}v and Ziegler conjectured that $\|G(k, \mathcal{M})\|$, the geometric realization of the poset $G(k, \mathcal{M})$ has the homotopy type of $\mathrm{Gr}(k, \mathbb{R}^n)$. For $k = 2$, it was proven by Babson in \cite{bab:eric} that $\|G(2, \mathcal{M})\|$ has the same homotopy type as $\mathrm{Gr}(2, \mathbb{R}^n)$. It was also proven in \cite{bab:eric} that $\|\mathrm{G}(1,2, \mathcal{M})\|$ has the same homotopy type as $\mathrm{Gr}(1,2, \mathbb{R}^n)$.

We will show that the complexes $\|\mbox{MacP}(2,n)\|$ and $\|\mbox{MacP}(1,2,n)\|$ are homeomorphic to $\mathrm{Gr}(2,\mathbb{R}^n)$ and $\mathrm{Gr}(1,2,\mathbb{R}^n)$ respectively. It follows from Babson's work in ~\cite{bab:eric}  that the complex $\|\mbox{MacP}(2,n)\|$ and $\mathrm{Gr}(2, \mathbb{R}^n)$ have the same homotopy type, and that the complex $\|\mathrm{MacP}(1,2, n)\|$ has the same homotopy type as $\mathrm{Gr}(1,2,\mathbb{R}^n)$. Also, it can  easily be shown that for $k=1$, $\|\mathrm{MacP}(1,n)\|$ is homeomorphic to $\mathbb{R}P^{n-1}$. 

In Section \ref{sec:back}, we will give basic background on oriented matroids, posets and regular cell complexes that is sufficient for this part of the project.  We will also introduce the maps $\mu: \mathrm{Gr}(k, \mathbb{R}^n) \rightarrow \mathrm{MacP}(k,n)$ mapping a $k$ dimensional subspace to the rank $k$ oriented matroid it determines, and the map $\nu : \mathrm{Gr}(p,k, \mathbb{R}^n) \rightarrow \mathrm{MacP}(p, k, n)$ mapping a flag of subspaces to a flag of oriented matroids. To establish our main assertion about the topology of $\|\mbox{MacP}(2,n)\|$ and $\|\mbox{MacP}(1,2,n)\|$, we will prove in Theorem \ref{main} that the stratification $\{\mu^{-1}(M): M \in \mathrm{MacP}(2,n)\}$ is a regular cell decomposition of $\mathrm{Gr}(2, \mathbb{R}^n)$. Similarly, we will prove in Theorem \ref{main1} that the stratification $\{\nu^{-1}(N, M) : (N , M) \in \mathrm{MacP}(1,2,n)\}$ is a regular cell decomposition for $\mathrm{Gr}(1,2,\mathbb{R}^n)$.

In Proposition \ref{prop1}, we will show that $\mu^{-1}(M)$ is homeomorphic to an open ball. Similarly, $\nu^{-1}(N, M)$ will be shown in Proposition \ref{homeo} to be homeomorphic to an open ball. In Proposition \ref{prop2}, the boundary $\partial \overline{\mu^{-1}(M)}$ of $\mu^{-1}(M)$ will be shown to be $\bigcup_{N< M} \mu^{-1}(N)$ as the union of lower dimensional cells. We have similar result in Proposition \ref{bdry1} for the boundary of $\nu^{-1}(N , M)$. 
	
In Section \ref{sec:shell_1} and Section \ref{sec:shell_2}, we will prove that the intervals $\mathrm{MacP}(2,n)_{\leq M} \cup \{\hat{0}\}$ and $\mathrm{MacP}(1,2,n)_{\leq (N, M)} \cup \{\hat{0}\}$ are isomorphic to the augmented face posets of some shellable regular cell decomposition of spheres. To see this, we will prove that the intervals $\mathrm{MacP}(2,n)_{\leq M} \cup \{\hat{0}\}$ and $\mathrm{MacP}(1,2,n)_{\leq (N, M)} \cup \{\hat{0}\}$ have recursive atom ordering and are thin.

In Section \ref{sec:top_1}  and Section \ref{sec:top_2}, we will prove that the closures $\overline{\mu^{-1}(M)}$ and $\overline{\nu^{-1}(N , M)}$ are topological manifolds whose boundaries are spheres.

In rank $r \geq 3$, and for $M \in \mathrm{MacP}(r,n)$, $\mu^{-1}(M)$ is not necessarily connected. Our argument will also make use of the fact that in rank $2$, $\partial \overline{\mu^{-1}(M)} = \bigcup_{N< M} \mu^{-1}(N)$. This fact called \textit{normality}, is also not necessarily true in rank $r$ for  $r \geq 3$. We will use throughout this part of the project, the \textit{realizability} of rank $2$ oriented matroids; that is every rank $2$ oriented matroid can be obtained from an arrangement of vectors in $\mathbb{R}^2$. This fact is also in general not true for oriented matroids of rank at least $3$. Detailed results on rank $r$ oriented matroids for $r \geq 3$ can be found in  \cite{anders:zieg}.

\section{Oriented Matroids} \label{sec:back}

Suppose $X \in \mathrm{Gr}(p, \mathbb{R}^n)$. We will view elements of $\mathbb{R}^n$ as  $1\times n$ row vectors, so that $X$ is the rowspace of a $p \times n$ matrix.
	
The collection of all sign vectors
$$\{(\mbox{sign}(x_1), \mbox{sign}(x_2), \ldots, \mbox{sign}(x_n)): (x_1, x_2, \ldots, x_n) \in X \}$$ is a collection of sign vectors that are called the \textit{covectors} of an \textit{oriented matroid}. For a covector $C$, the set $\{i \in [n]: C(i) \neq 0\}$ is called the support of $C$. A covector of minimal support is called a \textit{cocircuit}. A formal definition of the covector set of an oriented matroid will be given later in this section.
	
	We can write in terms of column vectors as $X = \mathrm{Rowspace}(v_1 \; v_2 \; v_3 \; \cdots \; v_n)$. Every element of $X$ is of the form $(\alpha\cdot v_1 , \alpha\cdot v_2, \ldots, \alpha\cdot v_n)$ for some $\alpha = (\alpha_1, \alpha_2, \ldots, \alpha_p) \in \mathbb{R}^p$. So, $(\mathrm{sign}(\alpha \cdot v_1), \mathrm{sign}(\alpha \cdot v_2), \ldots \mathrm{sign}(\alpha \cdot v_n))$ is a covector of the oriented matroid $\mathcal{M}$ determined by $X$. 
	
	Let $\{v_{\alpha_i}\}_{\alpha_i}$ be the set of non-zero vectors in $\{v_1, v_2, \ldots v_n\}$. We consider the following arrangement  $(v_{\alpha_i}^{\perp})_{\alpha_i}$ of oriented linear hyperplanes. The arrangement determines a cellular decomposition of $\mathbb{R}^p$. The intersection of the cellular decomposition with $S^{p-1}$ the unit sphere in $\mathbb{R}^p$ gives a cellular decomposition of $S^{p-1}$. A cell in $S^{p-1}$ corresponds to a non-zero covector of  $\mathcal{M}$ and a non-zero covector of $\mathcal{M}$ corresponds to a cell in the cellular decomposition of $S^{p-1}$. The oriented matroid $\mathcal{M}$ with a covector set obtained this way is called a \textit{realizable oriented matroid}. It should be noted that rank $1$ and rank $2$ oriented matroids are realizable, but  oriented matroids of rank at least 3 are not necessarily realizable, details can be found in \cite{anders:zieg}.

Let $X = \mathrm{Rowspace}(v_1\; v_2 \; v_3 \ldots \; v_n)$ as defined earlier and $\mathcal{M}$ the corresponding oriented matroid. We consider the following function $\chi : [n]^p \rightarrow \{+, - , 0\}$  associated to $X$
	$$\chi(i_1, i_2, \ldots , i_p) = \mathrm{sign}(\det(v_{i_1} \; v_{i_2} \cdots v_{i_p}) )$$
The collection $\{\pm \chi\}$ is independent of the choice of basis vectors for $X$. We will write the resulting oriented matroid as $\mathcal{M} = (\pm \chi)$.

	In Figure ~\ref{rank2,3}(b), a rank $3$ oriented matroid is obtained from an essential arrangement of equators in a 2-sphere $S^2$.

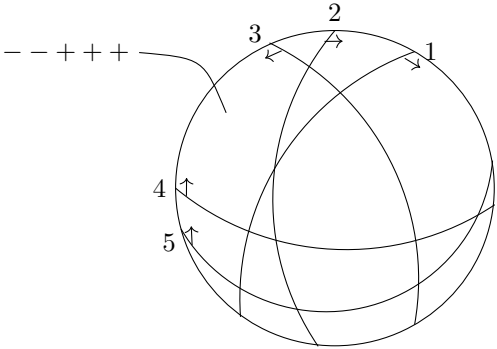
\begin{figure}[htb!]
\begin{subfigure}{0.3\textwidth}
\begin{tikzpicture}
	\begin{scope}[scale=.7]
		\draw (0,0) circle(3);
		\draw (60:3)  arc[radius = 5, start angle= 110, end angle= 184] node[right] at (60:3){$1$};
		\draw[->](62:2.8)--(55:2.8);
		\draw[->](111:2.8)--(118:2.8);
		\draw[->, rotate=32](62:2.8)--(55:2.8);
		\draw[->, rotate=121](62:2.8)--(55:2.8);
		\draw[->, rotate=140](62:2.9)--(55:2.8);
		\draw[rotate=120] (60:3)  arc[radius = 5, start angle= 110, end angle= 184] node[above] at (0:3){$3$};
		\draw[rotate=240] (60:3)  arc[radius = 5, start angle= 110, end angle= 184] node[left] at (300:3){$4$};
		\draw[rotate=30] (60:3)  arc[radius = 5, start angle= 110, end angle= 184] node[above] at (60:3){$2$};
		\draw(195:3) arc[radius=3.15, start angle=210, end angle =355] node[left] at (200:3){$5$};	
		\draw[] (145:4.5).. controls (135:3.5).. (145:2.5) node[left] at (145:4.5) {$--+++$} ;
\end{scope}
\end{tikzpicture}
\end{subfigure}
\caption{An  rank $3$ oriented matroid from an arrangement of equators}
\label{rank2,3}
\end{figure}

Notation: Let $E$ be a finite set and $X,Y\in\{0,+,-\}^E$ be sign vectors. The {\em composition} $X\circ Y$ is defined to be the element of $\{0,+,-\}^E$ with 
\[X\circ Y(e)=\begin{cases} 
X(e)&\mbox{ if $X(e)\neq 0$}\\
Y(e)&\mbox{ otherwise}
\end{cases}\]

\begin{defn} (\cite{anders:zieg}) Let $E$ be a finite set and $\V^* \subseteq\{0,+,-\}^E$. $\V^*$ is the {\em covector set} of an oriented matroid on elements $E$ if it satisfies all of the following.
\begin{enumerate}
\item $\mathbf 0\in\V^*$.
\item If $X\in\V^*$ then $-X\in\V^*$.
\item If $X,Y\in\V^*$ then $X\circ Y\in \V^*$.
\item If $X,Y\in\V^*$, $X\neq -Y$, and $e\in E$ such that $X(e)=-Y(e)\neq 0$, then there 
is a $Z\in\V^*$ such that $Z(e)=0$ and, for each $f\in E$,
\begin{itemize}
\item if $X(f)=Y(f)=0$ then $Z(f)=0$,
\item If $+\in\{X(f),Y(f)\}\subseteq\{0,+\}$ then $Z(f)=+$, and 
\item If $-\in\{X(f),Y(f)\}\subseteq\{0,-\}$ then $Z(f)=-$.
\end{itemize}
\end{enumerate}
If $\V^*$ is the  covector set of an oriented matroid, then the {\em rank} of the oriented matroid is the rank of $\V^*$ as a subposet of $\{0,+,-\}^E$.
\end{defn}

\begin{defn} (\cite{anders:bjo}) Let $E$ be a finite set and $r$ a positive integer. A {\em rank $r$ chirotope on elements $E$} is a nonzero alternating function $\chi:E^r\to\{0,+,-\}$ satisfying the following {\em Grassmann-Pl\"ucker relations}: for each $x_2, x_3,\ldots, x_r, y_0, y_1, \ldots, y_r\in E$, the set
\[\{(-1)^i\chi(y_i, x_2, \ldots, x_r)\chi(y_0, \ldots, \hat y_i,\ldots, y_r)\}\]
 either is $\{0\}$ or contains both $+$ and $-$. 
\end{defn}

\begin{defn}(Basis orientation)(\cite{anders:bjo})
A basis orientation of an oriented matroid $\mathcal{M}$ is a mapping $\chi$ of the set of ordered bases of $\mathcal{M}$ to $\{+1, -1\}$ satisfying the following two properties
\begin{itemize}
    \item $\chi$ is alternating,
    \item For any ordered bases of $\mathcal{M}$ of the form $(e, x_2, x_3, \ldots, x_p)$ and $(f, x_2, x_3, \ldots, x_p)$, $e \neq f$, we have
    $$\chi(e, x_2, x_3, \ldots, x_p) = D(e)D(f),$$
    where $D$ is one of the two opposite cocircuits complementary to the hyperplane spanned by $\{x_2, x_3 , \ldots, x_p\}$ in $\mathcal{M}.$
\end{itemize}
\end{defn}

The following theorem establish the cryptomorphism between the definition of an oriented matroid using covectors and its definition using a chirotope.

\begin{thm} (\cite{jim:law})
Let $p\geq 1$ be an integer and $E$ be a set. A mapping $\chi: E^p \rightarrow \{+1, 0, -1\}$ is a basis orientation of an oriented matroid of rank $p$ on $E$ if and only if it is a chirotope.
\end{thm}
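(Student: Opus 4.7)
The plan is to prove the equivalence by establishing each direction through the natural correspondence between cocircuits of the oriented matroid and the values of $\chi$ on ordered $p$-tuples completing a rank $(p-1)$ flat. Both directions hinge on the identity $\chi(e, x_2, \ldots, x_p) = D(e) D(f)$, where $D$ is a cocircuit complementary to the hyperplane spanned by $\{x_2, \ldots, x_p\}$ and $f$ is any element such that $(f, x_2, \ldots, x_p)$ is a basis.

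For the forward direction (basis orientation $\Rightarrow$ chirotope), alternativeness is already built into the definition of basis orientation, so only the Grassmann--Pl\"ucker relations remain to be checked. Fix $x_2, \ldots, x_p, y_0, y_1, \ldots, y_p \in E$. If $\{x_2, \ldots, x_p\}$ is dependent in the underlying matroid then every $\chi(y_i, x_2, \ldots, x_p)$ vanishes and the conclusion is immediate; otherwise let $D$ be a cocircuit complementary to the hyperplane spanned by $\{x_2, \ldots, x_p\}$. Using the basis-orientation property and alternativeness, each $\chi(y_0, \ldots, \hat y_i, \ldots, y_p)$ can be rewritten in terms of $D$-values together with a second cocircuit attached to the remaining flat, and the alternating sum reassembles into an instance of cocircuit elimination for a pair of opposite cocircuits. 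That elimination axiom then forces the indicated sign set to be either $\{0\}$ or to contain both $+$ and $-$.

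For the reverse direction (chirotope $\Rightarrow$ basis orientation), I would construct the candidate covector set and verify the oriented matroid axioms from $\chi$. Declare the bases of the underlying matroid to be the $p$-subsets $B$ with $\chi(B) \neq 0$. For each $(p-1)$-subset $\{x_2, \ldots, x_p\}$ contained in a basis, define $D(e) = \chi(e, x_2, \ldots, x_p)$ and let the cocircuit set consist of all such $D$ and their negatives; closing under composition yields the full covector set. Axioms (1)--(3) are routine. For the elimination axiom (4), given $X, Y$ with $X(e) = -Y(e) \neq 0$, one applies a Grassmann--Pl\"ucker relation to a carefully chosen $(p+1)$-tuple built from bases realizing $X$ and $Y$; the GP dichotomy produces a third sign vector $Z$ with $Z(e) = 0$ and the required sign conformality on every other coordinate. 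Once the oriented matroid $\mathcal{M}$ is in place, unwinding the construction confirms that $\chi$ is exactly a basis orientation of $\mathcal{M}$.

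The main obstacle is the cocircuit elimination step in the reverse direction: one must pick the right Grassmann--Pl\"ucker $(p+1)$-tuple so that the surviving GP term delivers a $Z$ whose sign profile simultaneously matches axiom (4) on every coordinate $f \in E$, uniformly in $f$. This is the technical core of Lawrence's argument in \cite{jim:law}; the forward direction is essentially a repackaging of the cocircuit axioms into determinantal sign form, and once the elimination step is established both directions close cleanly.
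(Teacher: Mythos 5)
The paper does not prove this theorem: it is cited directly from \cite{jim:law} (Lawrence's cryptomorphism between chirotopes and covector/cocircuit axiomatizations, as reported in the oriented-matroids literature) and used as a black box, so there is no in-paper proof to compare against. Your proposal is a reasonable high-level outline of the standard argument, but it is only a sketch at the places where the actual work happens. In the forward direction, the claim that each term $\chi(y_0,\ldots,\hat y_i,\ldots,y_p)$ ``can be rewritten in terms of $D$-values together with a second cocircuit'' and that the signs ``reassemble into an instance of cocircuit elimination'' is exactly the nontrivial content; without exhibiting the second cocircuit and tracking how the alternating signs $(-1)^i$ interact with the two cocircuit evaluations, one cannot conclude the GP dichotomy. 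In the reverse direction, the assertion that axioms (1)--(3) are ``routine'' undersells the difficulty: composition closure of the covector set generated by cocircuits is not automatic and itself relies on elimination, and the well-definedness of $D$ up to global sign (independence of the ordering of $x_2,\ldots,x_p$ and of the choice of completing element $f$) must be established from GP before elimination is even attempted. You correctly identify cocircuit elimination as the technical core, but a proof would need the explicit choice of GP $(p+1)$-tuple and the verification that it yields $Z$ with the required conformality on all coordinates, not just existence of some $Z$ with $Z(e)=0$. As a plan your proposal points in the right direction; as a proof it defers essentially all of Lawrence's work to the reader.
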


In general, an oriented matroid is obtained from an arrangement of pseudospheres. Figure \ref{pseudo} illustrates an arrangement of pseudospheres.

\begin{thm}(\cite{jim:law}){The Topological Representation Theorem (Folkman-Lawrence 1978)}
	The rank $r$ oriented matroids are exactly the sets $(E,\mathcal{V}^*)$ arising from essential {\em arrangements of pseudospheres} in $S^{r-1}$.
	 \end{thm}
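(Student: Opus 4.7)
The plan is to prove the equivalence by establishing both directions separately; they are of vastly different difficulty. The forward direction---that a pseudosphere arrangement determines an oriented matroid---is essentially a direct verification of the covector axioms, while the reverse direction---that every oriented matroid arises from a pseudosphere arrangement---is where the substantive work lies.

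For the forward direction, suppose $\mathcal{A} = (S_e)_{e \in E}$ is an essential arrangement of oriented pseudospheres in $S^{r-1}$. I would define, for each $x \in S^{r-1}$, the sign vector $\sigma(x) \in \{0,+,-\}^E$ by letting $\sigma(x)(e)$ record which open side of $S_e$ the point $x$ lies on, with the value $0$ assigned when $x \in S_e$. Set $\mathcal{V}^* = \{\sigma(x) : x \in S^{r-1}\} \cup \{\mathbf 0\}$. I would then verify the four covector axioms: negation follows from the antipodal involution on $S^{r-1}$, which reverses every pseudosphere orientation; composition $X \circ Y$ is obtained by perturbing a representative of $Y$ a short distance along any path toward a representative of $X$, so that nonzero coordinates of $X$ dominate; and elimination is handled by traversing a short arc from a representative of $X$ to a representative of $Y$ and selecting the first crossing of $S_e$, with the pseudosphere intersection properties ensuring the resulting sign vector has the required coordinates.

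For the reverse direction, given a rank $r$ oriented matroid $\mathcal{M} = (E, \mathcal{V}^*)$, the strategy due to Folkman and Lawrence is to realize the poset of nonzero covectors as the face poset of a regular CW decomposition of $S^{r-1}$ and then to extract the pseudospheres as subcomplexes. My plan is: (i) show, by induction on rank using deletion and contraction of oriented matroids, that the augmented poset $\mathcal{V}^* \cup \{\hat 1\}$ is thin and admits a recursive atom ordering; (ii) apply Bj\"orner's criterion that a thin poset with a recursive atom ordering is the augmented face poset of a shellable regular CW decomposition of a sphere of the appropriate dimension, thereby producing a regular CW decomposition of $S^{r-1}$; (iii) for each $e \in E$, define $S_e$ to be the subcomplex consisting of the cells indexed by covectors $X$ with $X(e)=0$, oriented by the sign of the $e$-coordinate on the complementary open cells. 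Essentiality of the resulting arrangement is then forced by the rank condition.

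The main obstacle is verifying that the collection $(S_e)_{e \in E}$ so constructed genuinely satisfies the pseudosphere arrangement axioms. In particular, I must show that each $S_e$ is a tamely embedded $(r-2)$-sphere, and that for every subset $A \subseteq E$ the intersection $\bigcap_{e \in A} S_e$ is either empty or a pseudosphere of the expected codimension, meeting the remaining $S_f$ in the correct combinatorial pattern inside that smaller sphere. These local properties reduce to identifying the subposet $\{X \in \mathcal{V}^* : X(e) = 0 \text{ for all } e \in A\}$ with the covector poset of the contraction $\mathcal{M}/A$, and then reapplying the shellability of step (i) inductively at lower rank. Once these intersection properties are in hand, the identification $\sigma(x) \in \mathcal{V}^*$ for every $x \in S^{r-1}$ follows directly from how the CW decomposition was built, completing the proof.
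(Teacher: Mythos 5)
The paper does not prove this statement; it cites the Topological Representation Theorem as a known result and uses it as background, so there is no paper proof to compare your proposal against. Judged on its own terms, your plan is a faithful sketch of the modern Edmonds--Mandel route (as presented, for example, in Bj\"orner, Las Vergnas, Sturmfels, White and Ziegler), not of the original Folkman--Lawrence argument, but this is the standard and cleanest path, so the overall strategy is sound.

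That said, step (i) is where essentially the entire content of the theorem lives, and your proposal elides it. Proving that the augmented covector poset $\mathcal{V}^* \cup \{\hat 1\}$ admits a recursive atom (or coatom) ordering is the deep combinatorial fact here; it is \emph{not} a routine deletion/contraction induction. Thinness is easy (it follows directly from the covector elimination axiom applied to a corank-two interval), but the recursive ordering requires a genuine new idea: in the literature one typically orders the topes using the tope graph and a base tope, producing a recursive coatom ordering, and the inductive verification that the ordering restricts correctly to intervals $[\hat 0, T]$ and $[X, \hat 1]$ relies on structural lemmas about separation sets of topes and the minors $\mathcal{M}/\underline{X}$ that you have not identified. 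Your proposal names neither the candidate atom ordering on cocircuits nor the mechanism for verifying clause (ii) of the recursive atom ordering definition, which is where the work is. Similarly, in the forward direction, ``selecting the first crossing of $S_e$'' along an arc does not by itself produce the required sign vector: you need the pseudosphere arrangement axioms (each $\bigcap_{e \in A} S_e$ is a sphere, and each $S_f$ either contains it or meets it in a sub-pseudosphere) to guarantee that the arc can be chosen so that the side information behaves monotonically, and that a suitable $Z$ exists with the specified zero set and dominated nonzero coordinates. These are not filling-in-details omissions; they are the heart of both directions, and the proposal as written would not compile into a proof without supplying them.
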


\begin{figure}[htb]
\begin{tikzpicture}
	\begin{scope}[scale=.7]
		\draw (0,0) circle(3);
		\draw[->](64:2.8)--(56:2.8);
		\draw[->](111:2.8)--(118:2.8);
		\draw[->, rotate=32](62:2.8)--(55:2.8);
		\draw[->, rotate=121](62:2.8)--(55:2.8);
		\draw[->, rotate=140](62:2.9)--(55:2.8);
		\draw[rotate=120] (60:3)  arc[radius = 5, start angle= 110, end angle= 184] node[above] at (0:3){$3$};
		\draw[rotate=240] (60:3)  arc[radius = 5, start angle= 110, end angle= 184] node[left] at (300:3){$4$};
		\draw[rotate=30] (60:3)  arc[radius = 5, start angle= 110, end angle= 184] node[above] at (60:3){$2$};
		\draw(195:3) arc[radius=3.15, start angle=210, end angle =355] node[left] at (200:3){$5$};
		
		\draw (60:3)..controls (100:2.5) and (120:1.5)..(120:1)..controls(-60:.9) and (-80:1)..(240:3) node[left] at (60:3.5){$1$};
\draw[] (160:3)..controls (170:2.5) and (200:1.5)..(200:1)..controls(195:.9) and (30:1)..(20:1)..controls(22:1.2) and (-10:2.5)..(-20:3) node[left] at (160:3){$6$};
\draw[->] (162:2.8)--(155:2.8);
\end{scope}
\end{tikzpicture}
\caption{Arrangement of Pseudospheres}
\label{pseudo}
\end{figure}
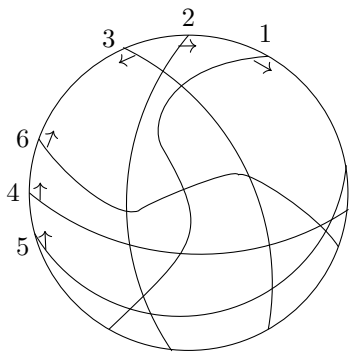

Let $\{+, -, 0\}$ be a poset with the partial order $0 < -$ and $0< +$. The partial order on $\{+, -, 0\}^n$ is component-wise the partial order on $\{+, -, 0\}$.

\begin{defn}(\cite{macp:rob})
    Let $\mathcal{M}$ and $\mathcal{N}$ be two rank $r$ oriented matroids, and $\V^*(\mathcal{M})$ and $\V^*(\mathcal{N})$ the covector sets of $\mathcal{M}$ and $\mathcal{N}$ respectively. We say that $\mathcal{N} \leq \mathcal{M}$ if and only if for every $X \in \V^*(\mathcal{N})$ there exist a $Y \in \V^*(\mathcal{M})$ such that $X \leq Y$. The oriented matroid $\mathcal{M}$ is said to {\em weak map} to $\mathcal{N}$. 
\end{defn}

 \begin{defn}(\cite{macp:rob})
 $\mathrm{MacP}(p,n)$ denotes the poset of all rank $p$ oriented matroids on elements $\{1,2,\ldots, n\}$, with weak map as the partial order. The poset is called the \textit{MacPhersonian} ~\cite{macp:rob}.
 \end{defn}

Let $\mathcal{M}$ be a rank $p$ oriented matroid elements $[n]$ and $\chi: [n]^p \rightarrow \{+ , - ,0\}$ its chirotope. We have the following abstraction of notions from vector spaces and convexity.
\begin{enumerate}[(i)]
    \item \textit{Loop}: An element $i$ is said to be a \textit{loop} of $\mathcal{M}$ if $\chi(i, i_1, i_2, \ldots, i_{p-1}) = 0$ for any $p-1$ tuple $(i_1, i_2, \ldots, i_{p-1}).$

    \item \textit{Basis}: A set $\{i_1, i_2, \ldots, i_p\}$ of size $p$ is said to be a \textit{basis} of $\mathcal{M}$ if and only if $\chi(i_1, i_2, \ldots, i_p) \neq 0$.
    
    \item \textit{Independence}: A set $\{i_1, i_2, \ldots, i_k\}$ is said to be \textit{independent} if it is contained in a basis of $\mathcal{M}$.
    
    \item \textit{Parallel/Anti-parallel}. An non-loop element $i$ is said to be \textit{parallel} to a non-loop element $j$ if for every $p-1$ tuple $(i_1, i_2, \ldots, i_{p-1})$, we have that $\chi(i, i_1, i_2, \ldots, i_{p-1}) = \chi(j, i_1, i_2, \ldots, i_{p-1})$. Similarly, $i$ is said to be \textit{anti-parallel} to $j$ if for every $p-1$ tuple $(i_1, i_2, \ldots, i_{p-1})$, we have that $\chi(i, i_1, i_2, \ldots, i_{p-1}) = -\chi(j, i_1, i_2, \ldots, i_{p-1}).$
    
    \item \textit{Convex Hull}: Let $S$ be a subset of $[n]$. The \textit{convex hull} of $S$ is the set $\{i \in [n]: - \in C(S) \; \mathrm{if}\;  C(i) = - \; \mbox{for all} \; C \in \V^*(\mathcal{M})\setminus \{0\}\}.$ 
\end{enumerate}

An oriented matroid also comes with an abstraction of the notion of subspaces of a vector space. Let $V$ be a rank $k$ subspace of $\mathbb{R}^n$ and $W$ a rank $p$ subspace of $V$. We have that the collection of sign vectors  $\{(\mbox{sign}(x_1), \mbox{sign}(x_2), \ldots, \mbox{sign}(x_n)): (x_1, x_2, \ldots, x_n) \in V\}$ is a subset of the collection $\{(\mbox{sign}(y_1), \mbox{sign}(y_2), \ldots, \mbox{sign}(y_n)): (y_1, y_2, \ldots, y_n) \in W\}$. Let $\mathcal{M}$ be the rank $k$ oriented matroid determined by $V$, and let $\mathcal{N}$ be the rank $p$ oriented matroid determined by $W$. Then $\mathcal{V}^*(\mathcal{N}) \subseteq \mathcal{V}^*(\mathcal{M}) $

\begin{defn}(\cite{zieg:mnev})
    Let $\mathcal{M}$ be a rank $k$ oriented matroid, and $\mathcal{N}$ a rank $p$ oriented matroid. $\mathcal{N}$ is said to be a rank $p$ {\em strong map image} of $\mathcal{M}$ if and only if $\V^*(\mathcal{N}) \subseteq \V^*(\mathcal{M})$.
\end{defn}

\begin{defn}(\cite{zieg:mnev})
Let $\mathcal{M}$ be an oriented matroid. The poset of all rank $p$ oriented matroids that are strong map image of $\mathcal{M}$ is denoted by $\mathrm{G}(p, \mathcal{M})$.
\end{defn}

In Figure \ref{strong_map}, the oriented matroid $\mathcal{N}$ is a rank $2$ strong map image of $\mathcal{M}$. 
\begin{figure}[htb]
\centering
\begin{tikzpicture}
	\begin{scope}[scale=.7]
		\draw (0,0) circle(3);
		\draw (60:3)  arc[radius = 5, start angle= 110, end angle= 184] node[right] at (60:3){$1$};
		\draw[->](68:2.5)--(78:2.5);
		\draw[->](111:2.8)--(118:2.8);
		\draw[->, rotate=32](62:2.8)--(55:2.8);
		\draw[->, rotate=121](62:2.8)--(55:2.8);
		\draw[->, rotate=140](62:2.9)--(55:2.8);
		\draw[rotate=120] (60:3)  arc[radius = 5, start angle= 110, end angle= 184] node[above] at (0:3){$3$};
		\draw[rotate=240] (60:3)  arc[radius = 5, start angle= 110, end angle= 184] node[left] at (300:3){$4$};
		\draw[rotate=30] (60:3)  arc[radius = 5, start angle= 110, end angle= 184] node[above] at (60:3){$2$};
		\draw(195:3) arc[radius=3.15, start angle=210, end angle =355] node[left] at (200:3){$5$};	
\draw (160:3)..controls (170:2.5) and (200:1.5)..(200:1)..controls(195:.9) and (30:1)..(20:1)..controls(22:1.2) and (-10:2.5)..(-20:3) node[left] at (160:3){$6$};
\draw[->] (162:2.8)--(155:2.8);

\draw[dashed] (250:3)..controls (255:2.5) .. (275:0.5).. controls (300:0.1)..(20:2.1).. controls (40:2.5)..(70:3) node[left] at (250:3.4){$\mathcal{N}$};
\end{scope}
\end{tikzpicture}
\caption{strong map image}
\label{strong_map}
\end{figure}
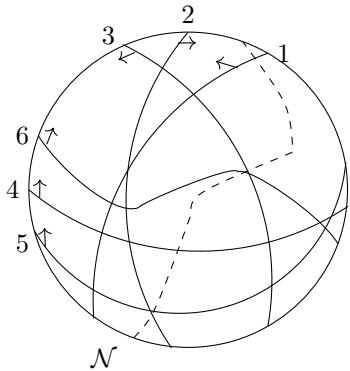

We now define the combinatorial analog of the flag manifold $\mathrm{Gr}(p,k, \mathbb{R}^n)$. The poset came up in the work of Gelfand and MacPherson in \cite{Gelf:MacP} and in the work of Babson in \cite{bab:eric}. 
    
\begin{defn} (\cite{bab:eric})
We define $\mathrm{MacP}(p,k,n)$ as the poset of pairs $(N,M)$ of oriented matroids, where $M$ is a rank $k$ oriented matroid on $n$ elements, and $N$ is a rank $p$ strong map image of $M$. The pair $(N, M)$ is called a \textit{combinatorial flag}.  We say that $(N_1, M_1) \leq (N_2, M_2)$ if and only if $M_1 \leq M_2$ and $N_1 \leq N_2$. 
\end{defn}

As in the case of the map $\mu : \mathrm{Gr}(p,\mathbb{R}^n) \rightarrow \mathrm{MacP}(p,n)$ discussed earlier, we also have the map $\nu: \mathrm{Gr}(k, p, \mathbb{R}^n) \rightarrow \mathrm{MacP}(p,k,n)$ defined by 
$(W, V) \mapsto (N, M)$ where $N$ and $M$ are the oriented matroids determined by subspaces $W$ and $V$ respectively.

\begin{prop}
    Let $M_0\in \mathrm{MacP}(2, n)$ be the rank $2$ oriented matroid whose only basis is $(1, 2)$, and let $N_0$ be a covector of $M_0$. Then $\bigcup_{(N, M) \geq (N_0, M_0)} \nu^{-1}(N, M)$ can be embedded in $\mathrm{Gr}(2, \mathbb{R}^{n+1})$ 
\end{prop}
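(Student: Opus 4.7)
The plan is to use the standard affine chart $U_{12}\subset\mathrm{Gr}(2,\mathbb{R}^n)$ associated to $M_0$ to parametrize the source, and then to encode each flag $(W,V)$ as a single $2$-plane in $\mathbb{R}^{n+1}$ obtained by adjoining one column to the canonical matrix representative of $V$, with the extra entries chosen so that the distinguished line is recoverable by intersecting with the hyperplane $\{x_{n+1}=0\}$.

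Because the only basis of $M_0$ is $\{1,2\}$, one has $M\ge M_0$ iff $\{1,2\}$ is a basis of $M$, and therefore $\bigcup_{M\ge M_0}\mu^{-1}(M)$ is the affine chart $U_{12}$ of $2$-planes whose projection to the first two coordinates is an isomorphism. Each $V\in U_{12}$ has the unique canonical basis $v_1=e_1+u_1,\ v_2=e_2+u_2$ with $u_1,u_2\in\mathrm{span}(e_3,\ldots,e_n)$, and a flag $(W,V)$ with $V\in U_{12}$ is determined by the data $(u_1,u_2,[a{:}b])$ with $W=\mathbb{R}(av_1+bv_2)$. The constraint $N\ge N_0$ restricts $[a{:}b]$ to an open arc $A_{N_0}\subset\mathbb{R}P^1$ whose shape depends only on $N_0(1),N_0(2)$ (since $N_0$ is supported in $\{1,2\}$); in each of the few cases $A_{N_0}$ is a connected open interval and admits a continuous section $\sigma:A_{N_0}\to\mathbb{R}^2\setminus\{0\}$ of the projection $\mathbb{R}^2\setminus\{0\}\to\mathbb{R}P^1$, obtained by a simple sign/normalization rule such as $(a,b)\mapsto(a,b)/\sqrt{a^2+b^2}$ with $a>0$ (or $b>0$ if $a\equiv 0$ on the arc).

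Having fixed $\sigma$, I will define $F:\bigcup_{(N,M)\ge(N_0,M_0)}\nu^{-1}(N,M)\to\mathrm{Gr}(2,\mathbb{R}^{n+1})$ by
\[F(W,V)\ =\ \mathrm{rowspace}\!\begin{pmatrix}v_1 & b\\[2pt] v_2 & -a\end{pmatrix},\qquad(a,b)=\sigma([a{:}b]).\]
This map is continuous by construction. For injectivity I recover $V$ as the image of $F(W,V)$ under the coordinate projection $\pi:\mathbb{R}^{n+1}\to\mathbb{R}^n$ dropping the last entry: $\pi$ restricts to a linear isomorphism $F(W,V)\to V$, being surjective (its image contains $v_1,v_2$) and injective because no nonzero element of $F(W,V)$ lies on the $e_{n+1}$-axis, as $(a,b)\ne(0,0)$. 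I recover $W$ as $F(W,V)\cap(\mathbb{R}^n\oplus 0)$: a combination $\alpha(v_1,b)+\beta(v_2,-a)$ has vanishing last coordinate iff $\alpha b=\beta a$, which forces $(\alpha,\beta)\in\mathbb{R}(a,b)$; the vector is then a scalar multiple of $(av_1+bv_2,0)=(w,0)$, so the intersection equals $\mathbb{R}(w,0)\cong W$. Since $V$ and $W$ can be read off continuously from $F(W,V)$, the inverse is continuous and $F$ is a topological embedding.

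The main obstacle is organizational rather than conceptual: one must go through the possible sign patterns of $(N_0(1),N_0(2))$ case-by-case to verify the form of $A_{N_0}$ and the existence of the section $\sigma$. Because $N_0$ is supported in $\{1,2\}$ there are only a handful of such patterns, each settled by an elementary normalization, after which the linear-algebraic verification above applies uniformly.
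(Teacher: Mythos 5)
Your construction is essentially the paper's: from the unique normalized representative $\mathrm{Rowspace}(e_1\;e_2\;v_3\cdots v_n)$ of $V$ (the paper views the $v_i\in\mathbb{R}^2$ as columns; you equivalently view $v_1,v_2$ as rows), append an $(n+1)$-st column orthogonal to the coefficient vector $\alpha=(a,b)$ of the distinguished line $W=\mathbb{R}(av_1+bv_2)$, then recover $V$ by dropping the last coordinate and $W$ by intersecting with $\{x_{n+1}=0\}$.

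Where you differ is in how the orthogonal column is pinned down, and here you are in fact more careful than the paper. The paper fixes $v_{n+1}\perp\alpha$ by $\|v_{n+1}\|=1$ and $0\le\mathrm{Arg}(v_{n+1})<\pi$ regardless of $N_0$; you instead choose a continuous section $\sigma$ of $\mathbb{R}^2\setminus\{0\}\to\mathbb{R}P^1$ over the arc $A_{N_0}$, with the sign/normalization rule depending on $N_0$. This care is warranted: when $N_0$ is a cocircuit supported on element $2$ alone, the constraint on $\alpha$ is only $\alpha_2\ne 0$, so $\alpha_1$ ranges over all of $\mathbb{R}$; across $\alpha_1=0$ the paper's selected perpendicular with argument in $[0,\pi)$ jumps from near $(-1,0)$ to $(1,0)$, so the paper's rule is not continuous on that stratum, while your case-dependent $\sigma$ (and the observation that $A_{N_0}$ is always a proper open arc, hence admits a section) resolves this. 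You also make explicit the injectivity and bicontinuity of the map via the projection and the hyperplane-intersection recovery; the paper states the formula and leaves these verifications implicit. So: same idea, but your write-up both supplies the missing details and repairs a small continuity gap in the paper's normalization.
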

    
\begin{proof}
Suppose $(N, M) \in \mathrm{MacP}(1,2,n)_{\geq (N_0, M_0)}$. Let $(Y, X) \in \nu^{-1}(N, M)$. Then $X = \mathrm{Rowspace}(e_1 \; e_2 \; v_3\; v_4\; \cdots v_n)$ for some vectors $v_i \in \mathbb{R}^2$. Also, $Y = \mathrm{Rowspace}(\textbf{r})$ for some $\textbf{r} \in X \subseteq \mathbb{R}^n$. Then $\textbf{r}$ is of the form $(\alpha \cdot e_1 \; \alpha \cdot e_2\; \alpha \cdot v_3 \cdots \alpha\cdot v_n )$ for some $\alpha \in \mathbb{R}^2$. There is a unique vector $v_{n+1} \in \mathbb{R}^2$ such that $v_{n+1} \cdot \alpha = 0 $, $\|v_{n+1}\| = 1$ and $0 \leq \mathrm{Arg}(v_{n+1}) < \pi$.

The embedding $\varphi$ is then given by $(Y, X) \mapsto \mathrm{Rowspace}(e_1\; e_2 \; v_3\; v_4 \cdots \; v_{n+1})$.
\end{proof}
Let $(Y, X)$ be any point in $\nu^{-1}(N, M)$. We denote by $\iota(N, M)$ the image of $(Y, X)$ under the map $\mu \circ \varphi$. Let $\chi'$ be a chirotope of $\iota(N, M)$. We observe that $N$ is given by the functions $\pm \chi_{n+1}': [n] \rightarrow \{+, - ,0\}$, where $\chi_{n+1}'(i) = \chi'(n+1, i).$ 
\begin{prop}
Let $(N_0, M_0) \in \mathrm{MacP}(1,2,n)$ such that $\{1,2\}$ is a basis of $M_0$. Then $\mathrm{MacP}_{(N, M) \geq (N_0, M_0)}$ can embedded as a subposet $\mathrm{MacP}(2, n+1)_{\geq \iota(N_0, M_0)}$ of $\mathrm{MacP}(2, n+1)$. 
\end{prop}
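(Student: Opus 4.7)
The plan is to show that $\iota$ restricts to a poset embedding $\mathrm{MacP}(1,2,n)_{\geq (N_0, M_0)} \hookrightarrow \mathrm{MacP}(2, n+1)_{\geq \iota(N_0, M_0)}$. This requires four steps: $\iota$ is well-defined; $\iota$ is order-preserving (in particular the image lies in the target interval); $\iota$ is injective; and $\iota$ reflects order.

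For well-definedness I would combine the remark immediately preceding the proposition with a direct chirotope computation. For any $(Y, X) \in \nu^{-1}(N, M)$ as in the construction, the chirotope $\chi'$ of $\mu(\varphi(Y, X))$ satisfies $\chi'(i,j) = \mathrm{sign}(\det(v_i, v_j))$; for $i, j \in [n]$ these $2 \times 2$ minors do not involve $v_{n+1}$, so they reproduce the chirotope of $M$. The cited remark gives $\pm\chi'(n+1,\cdot) = \pm N$ as the rank-$1$ oriented matroid $N$. Since an element of $\mathrm{MacP}(2, n+1)$ is a chirotope up to global sign, $\iota(N, M)$ depends only on $(N, M)$.

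For order preservation I would use continuity of $\varphi$ together with Propositions \ref{bdry1} and \ref{prop2}. Given $(N_1, M_1) \leq (N_2, M_2)$ in $\mathrm{MacP}(1,2,n)_{\geq (N_0, M_0)}$, Proposition \ref{bdry1} places $\nu^{-1}(N_1, M_1) \subseteq \overline{\nu^{-1}(N_2, M_2)}$; any $(Y_1, X_1) \in \nu^{-1}(N_1, M_1)$ is the limit of some sequence $(Y^{(k)}, X^{(k)}) \in \nu^{-1}(N_2, M_2)$. Continuity of $\varphi$ gives $\varphi(Y^{(k)}, X^{(k)}) \to \varphi(Y_1, X_1)$ in $\mathrm{Gr}(2, \mathbb{R}^{n+1})$; since each $\varphi(Y^{(k)}, X^{(k)}) \in \mu^{-1}(\iota(N_2, M_2))$, the limit $\varphi(Y_1, X_1)$ lies in $\overline{\mu^{-1}(\iota(N_2, M_2))}$. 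Proposition \ref{prop2} then forces $\iota(N_1, M_1) = \mu(\varphi(Y_1, X_1)) \leq \iota(N_2, M_2)$. Specializing to $(N_1, M_1) = (N_0, M_0)$ shows the image of $\iota$ is in $\mathrm{MacP}(2, n+1)_{\geq \iota(N_0, M_0)}$.

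For injectivity and order reflection I would use deletion and contraction of element $n+1$. The chirotope analysis of the first step shows $\iota(N, M) \setminus (n+1) = M$ and $\iota(N, M)/(n+1) = N$, so $\iota$ is injective. Since the weak map order on oriented matroids is preserved by both deletion and contraction of a single element, any inequality $\iota(N_1, M_1) \leq \iota(N_2, M_2)$ descends to $M_1 \leq M_2$ and $N_1 \leq N_2$, giving $(N_1, M_1) \leq (N_2, M_2)$. The main obstacle I anticipate is the continuity of $\varphi$ in the order-preservation step: the convention $0 \leq \mathrm{Arg}(v_{n+1}) < \pi$ picks a unique perpendicular unit vector, and $v_{n+1}$ clearly varies continuously on each open cell $\nu^{-1}(N, M)$, but the delicate point is to verify that $\alpha \mapsto v_{n+1}(\alpha)$ remains continuous along sequences crossing cell boundaries within $\bigcup_{(N, M) \geq (N_0, M_0)} \nu^{-1}(N, M)$. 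The hypothesis that $\{1, 2\}$ is a basis of $M_0$ fixes the first two columns $e_1, e_2$ throughout the interval, which should keep the $\mathrm{Arg}$ convention unambiguous on the relevant closures.
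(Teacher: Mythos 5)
Your four-step structure (well-definedness, order-preservation, injectivity, order-reflection) is a reasonable framework, and the first, third, and fourth steps are sound: the deletion/contraction observation $\iota(N,M)\setminus(n+1)=M$, $\iota(N,M)/(n+1)=N$, together with the fact that deletion and contraction of a fixed element preserve weak maps, gives injectivity and order-reflection cleanly, and this is essentially what the paper's one-line proof (``follows from the above identifications'') is gesturing at. The well-definedness check via the chirotope splitting into the $[n]$-part (giving $M$) and the $\chi'(n+1,\cdot)$-part (giving $N$) is also fine.

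The order-preservation step, however, has two genuine problems. First, you invoke Proposition \ref{bdry1} to place $\nu^{-1}(N_1,M_1)\subseteq\overline{\nu^{-1}(N_2,M_2)}$, but in the paper Proposition \ref{bdry1} is itself derived ``by the identification of the interval \dots in (\textbf{SUBP})'' --- and \textbf{SUBP} is precisely the content of the proposition you are proving. Your argument is therefore circular as written; you would need an independent analogue of Lemma \ref{le} and Proposition \ref{prop2} for the $\nu$-stratification, proved from scratch, before the continuity route can get off the ground. Second, the continuity concern you flag at the end is not just a loose end to be tidied: the map $\varphi$ is genuinely discontinuous wherever $\mathrm{Arg}(v_{n+1})$ passes through $0$ (equivalently, wherever the covector of $N$ vanishes on element $1$). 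At such a point $\varphi$ jumps between $\mathrm{Rowspace}(\dots,e_1)$ and $\mathrm{Rowspace}(\dots,-e_1)$, and these are distinct elements of $\mathrm{Gr}(2,\mathbb{R}^{n+1})$; taking $\mu$ does not repair this, since reorienting the single element $n+1$ changes the oriented matroid. The hypothesis that $\{1,2\}$ is a basis of $M_0$ does \emph{not} keep $\mathrm{Arg}(v_{n+1})$ away from $0$: it is perfectly possible, under the stated hypothesis, for $N_0$ (hence some $N\geq N_0$) to have $N(1)=0$, which is exactly the locus where the jump occurs. So the ``should keep the $\mathrm{Arg}$ convention unambiguous'' hope at the end of your write-up is what actually fails. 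To make order-preservation rigorous you either need an additional hypothesis (such as $1$ never being a loop of $N_0$, which does force $\alpha\cdot e_1\neq 0$ throughout the interval and removes the discontinuity), or a direct covector-level argument showing $\mathcal V^*(\iota(N_1,M_1))$ maps into $\mathcal V^*(\iota(N_2,M_2))$ in the weak-map sense, bypassing $\varphi$ entirely.
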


\begin{proof}
It follows from the above identifications that $(N_1, M_1) \leq (N_2, M_2)$ if and only if $\iota(N_1, M_1) \leq \iota(N_2, M_2)$. 
\end{proof}

\textbf{SUBP}: Let $(N_0, M_0)$ be in $\mathrm{MacP}(1,2,n)$ and $\iota(N_0, M_0)$ be the corresponding rank $2$ oriented matroid in $\mathrm{MacP}(2,n+1)$ described above. Then the interval $\mathrm{MacP}(1,2,n)_{\geq (N_0, M_0)}$ in $\mathrm{MacP}(1,2,n)$ can be identified with the interval $\mathrm{MacP}(2, n+1)_{\geq \iota(N_0, M_0)}$ in $\mathrm{MacP}(2,n+1)$.
	
We can visualize  an element of the flag manifold $\mathrm{Gr}(1,2,\mathbb{R}^n)$ as in Figure \ref{flag_r} (a). In Figure \ref{flag_r} (b), the sign vectors $\{(+-++-), -(+-++-), 0 \}$ is the covector set of a  rank $1$ oriented matroid determined by the 1 dimensional subspace $Y$.

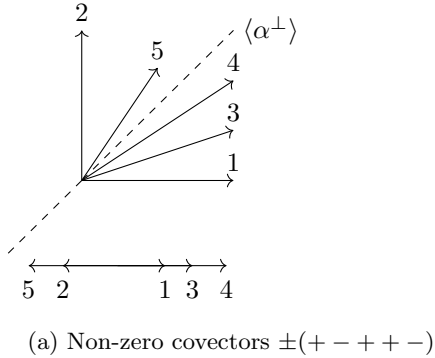
\begin{figure}[!htb]
	\centering
\begin{subfigure}[t]{0.5\textwidth}
	\begin{tikzpicture}
	\path[->] (0,0) edge node[at end, above]{$3$} (2,0.67) ;
	\path[->] (0,0) edge node[at end, above]{$2$} (0,2) ;
	\path[->] (0,0) edge node[at end, above]{$1$} (2,0) ;
	\path[->] (0,0) edge node[at end, above]{$4$} (2,1.33)  ;
	\path[->] (0,0) edge node[at end,  above]{$5$} (1,1.5) ;
	\draw[dashed] (0,0) edge node[at end, right]{$\langle \alpha^{\perp} \rangle$} (2, 2);
	\draw[dashed] (0,0) edge (-1, -1);
	\end{tikzpicture}
\end{subfigure}
~
\begin{subfigure}[t]{0.5\textwidth}
	\begin{tikzpicture}[scale=.9,auto=center,every node/.style={circle}]
	\path[->] (0,0) edge node[at end, below]{$1$} (0.7, 0);
	\path[->] (0,0) edge node[at end, below]{$3$} (1.1, 0);
	\path[->] (0,0) edge node[at end, below]{$4$} (1.6, 0);
	\path[->] (0,0) edge node[at end, below]{$2$} (-0.8, 0);
	\path[->] (0,0) edge node[at end, below]{$5$} (-1.3, 0);
	\end{tikzpicture}
	\caption{Non-zero covectors $\pm (+-++-)$}
\end{subfigure}
\caption{A flag of subspaces, and the rank one oriented matroid  }
\label{flag_r}
\end{figure} 

\subsubsection{Reorientation of an oriented matroid}
We have described a realizable rank $p$ oriented matroid as determined by some arrangement of vectors in $\mathbb{R}^p$. We now discuss the \textit{reorientation} of a rank $p$ oriented matroid.

\begin{defn}
	We say that a rank $p$ oriented matroid $\mathcal{M} = (\pm \chi_\mathcal{M})$ is obtained from a rank $p$ oriented matroid  $ \mathcal{N} = (\pm \chi_\mathcal{N})$  by a \textit{reorientation} of an element $i$ if $\chi_\mathcal{M}(i_1, i_2, \ldots, i_p) = \chi_\mathcal{N}(i_1, i_2, \ldots, i_p)$ for all $p$-tuples $(i_1, i_2, \ldots, i_p)$ such that $i \notin \{i_1, i_2, \ldots, i_p\}$, and $\chi_\mathcal{M}(i,j_1, j_2,\ldots, j_{p-1}) = - \chi_\mathcal{N}(i,j_1, j_2, \ldots, j_{p-1})$ for all $p-1$ tuples $(j_1, j_2, \ldots j_{p-1}).$ 
\end{defn}

In the language of vector arrangements, reorientation of an element $i$ is simply that if $(w_1, w_2, w_3, \ldots,w_i, \ldots , w_n)$ is a vector arrangement for $\mathcal{N}$, then $(w_1, w_2, w_3, \\ \ldots , -w_i, \ldots , w_n)$ is a vector arrangement for $\mathcal{M}$.

It then follows that if $\mathcal{M}$ is obtained from $\mathcal{N}$ by a reorientation or a sequence of reorientations, then $\mu^{-1}(\mathcal{M})$ is homeomorphic to $\mu^{-1}(\mathcal{N})$. Another useful observation is the following lemma: 
\begin{lemma}
	Suppose $\mathcal{N}$ is obtained from $\mathcal{M}$ by reorientations of some elements in $[n]$. Then the posets $(\hat{0}, \mathcal{M})$ and $(\hat{0}, \mathcal{N})$ are isomorphic.
\end{lemma}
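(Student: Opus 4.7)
The plan is to show that reorientation by a fixed subset $S \subseteq [n]$ induces a poset automorphism of the full MacPhersonian $\mathrm{MacP}(p,n)$ that carries $\mathcal{M}$ to $\mathcal{N}$, and then to restrict this automorphism to the open interval in question. Since $\hat 0$ is a formally adjoined minimum, any poset automorphism of $\mathrm{MacP}(p,n)$ extends to fix $\hat 0$, so it will restrict to an isomorphism between $(\hat 0, \mathcal{M})$ and $(\hat 0, \mathcal{N})$.

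First I would introduce the set-level reorientation map $r_S \colon \{0,+,-\}^n \to \{0,+,-\}^n$ that flips the sign of the $i$-th coordinate for each $i \in S$ and fixes zero entries. This map is a bijection, and because $+$ and $-$ play symmetric roles above $0$ in the componentwise order on $\{0,+,-\}^n$, the map $r_S$ is in fact a poset automorphism: $X \leq Y$ if and only if $r_S(X) \leq r_S(Y)$.

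Next I would argue that reorienting an oriented matroid at $S$ applies $r_S$ to every covector, i.e.\ $\V^*(r_S(\mathcal{M}')) = r_S(\V^*(\mathcal{M}'))$ for every $\mathcal{M}' \in \mathrm{MacP}(p,n)$. For realizable cases this is immediate from the vector-arrangement description given after the definition of reorientation (negating $v_i$ just negates the $i$-th coordinate of every covector), and in general it follows from the chirotope definition: the equality $\chi_\mathcal{M}(i, j_1,\ldots,j_{p-1}) = -\chi_\mathcal{N}(i, j_1,\ldots,j_{p-1})$ translates, via the basis orientation/cocircuit relation, into a sign flip on the $i$-th coordinate of each cocircuit, which then propagates to all covectors by composition. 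One must also check that the reoriented covector set still satisfies the axioms of a covector set, but this is essentially automatic since each axiom is invariant under the simultaneous sign flip encoded by $r_S$.

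With these ingredients, the conclusion follows quickly. The weak map order reads $\mathcal{M}_1 \leq \mathcal{M}_2$ iff every $X \in \V^*(\mathcal{M}_1)$ satisfies $X \leq Y$ for some $Y \in \V^*(\mathcal{M}_2)$. Applying $r_S$ to both matroids replaces each $X$ by $r_S(X)$ and each $Y$ by $r_S(Y)$, and since $r_S$ preserves the order on sign vectors, the relation is preserved in both directions. Hence $\mathcal{M}' \mapsto r_S(\mathcal{M}')$ is an order-preserving bijection on $\mathrm{MacP}(p,n)$ sending $\mathcal{M}$ to $\mathcal{N}$. Restricting to elements strictly below $\mathcal{M}$ yields the desired isomorphism $(\hat 0, \mathcal{M}) \cong (\hat 0, \mathcal{N})$. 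The only real obstacle is the routine but careful bookkeeping in the second step, verifying that reorientation at the chirotope level corresponds exactly to the coordinate-wise sign flip $r_S$ on the full covector set rather than merely on the cocircuits.
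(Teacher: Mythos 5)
Your proposal is correct and takes the same approach as the paper: the reorientation map $R_A$ (your $r_S$) is the desired isomorphism. The paper's proof is a single sentence asserting this; you have simply supplied the standard verification that reorientation acts as a coordinatewise sign flip on covectors, preserves the componentwise order on sign vectors, and hence preserves the weak map order in both directions.
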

\begin{proof}
	Suppose $\mathcal{N}$ is obtained from $\mathcal{M}$ by reorienting elements in $A \subseteq [n]$. Then the required poset  isomorphism $R_A: (\hat{0}, \mathcal{M}) \rightarrow (\hat{0}, \mathcal{N})$ is obtained by taking $R_A(Y)$ as a reorientation of the rank $2$ oriented matroid $Y$ by elements in $A$.
\end{proof}

It should also be noted that the homeomorphism type of $\mu^{-1}(\mathcal{M})$ is unchanged by relabelling the elements of $\mathcal{M}$. 

\section{Posets, Regular Cell Complexes and Topological Balls }

\subsection{Posets and Recursive atom ordering}
Associated to every poset $P$ is a simplicial complex $\|P\|$ whose $n$-simplices are chains $x_0 < x_1 < \cdots < x_n$ for some $x_i$ in $P$. $\|P\|$ is called the \textit{order complex} of $P$. We will be studying the topology of the order complex of the posets $\mathrm{MacP}(2,n)$ and $\mathrm{MacP}(1,2,n)$.

\begin{defn}(\cite{anders:wachs})
A finite poset $P$ is said to be \textit{semimodular} if it is bounded, and whenever two distinct elements $u, v$ both cover $x \in P$, there is a $z \in P$ that covers both $u$ and $v$. The poset $P$ is defined to be \textit{totally semimodular} if it is bounded and every interval in $P$ is semimodular.  
\end{defn}

\begin{defn}(\cite{anders:wachs})
Let $P$ be a poset. $P$ is said to be \textit{thin} if every interval of length $2$ in $P$ has exactly four elements.
\end{defn}

\begin{defn}(\cite{anders:wachs})\label{raon}
	A graded poset $P$ is said to admit a \textit{recursive atom ordering} if the length of $P$ is 1 or if the length of $P$ is greater than 1 and there is an ordering $a_1, a_2, \ldots , a_t$ of the atoms of $P$ which satisfies:
	
	\begin{enumerate}[(i)]
		\item For all $j = 1, 2, \ldots , t \; [a_j, \hat{1}]$ admits a recursive atom ordering in which the atoms of $[a_j, \hat{1}]$ that comes first in the ordering are those that cover some $a_i$ where $i < j$.
		
		\item For all $i < j$, if $a_i, a_j < y$, then there is a $k < j$ and an element $z \leq y$ such that $z$ covers $a_k$ and $a_j$.
	\end{enumerate} 
\end{defn}

The following theorem and proof appears in the work of Bjorner and Wachs (\cite{anders:wachs}). We will give below their proof of the only if direction.

\begin{thm}\label{bw}(\cite{anders:wachs})
 A graded poset $P$ is totally semimodular if and only if for every interval $[x,y]$ of $P$, every atom ordering of $[x,y]$ is a recursive atom ordering.
\end{thm}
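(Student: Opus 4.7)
The plan is to prove the biconditional in two pieces: the forward direction (totally semimodular implies every atom ordering of every interval is a recursive atom ordering) goes by induction on interval length using Definition~\ref{raon}, and the backward direction (the converse) is a single application of axiom (ii) inside a carefully chosen subinterval.

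For the forward direction, I would induct on the length of an interval $[x,y]$. The case of length one is handled by Definition~\ref{raon} directly, which declares any length-one poset to admit a recursive atom ordering. For the inductive step, fix an arbitrary atom ordering $a_1,\ldots,a_t$ of $[x,y]$. To check axiom (ii), let $i<j$ and let $y'\in[x,y]$ be any element with $a_i,a_j<y'$. The subinterval $[x,y']$ is semimodular by total semimodularity of $P$, so the two distinct atoms $a_i,a_j$ of $[x,y']$ admit a common cover $z$ lying in $[x,y']$; taking $k=i$ satisfies the axiom. To check axiom (i), observe that $[a_j,y]$ is a subinterval of $P$ and hence is itself totally semimodular, and it has length strictly less than that of $[x,y]$. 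By the inductive hypothesis every atom ordering of $[a_j,y]$ is a recursive atom ordering, so in particular I may choose one that lists first those atoms of $[a_j,y]$ which cover some $a_i$ with $i<j$.

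For the backward direction, take an arbitrary interval $[x,y]$ of $P$, an element $w\in[x,y]$, and distinct elements $u,v\in[x,y]$ each covering $w$; I must exhibit some $z$ covering both. First note $u<y$ and $v<y$, since if $u=y$ then $v\le u$ together with $v>w$ and $u$ covering $w$ would force $v=u$, contradicting distinctness. Working inside the subinterval $[w,y]$, the elements $u,v$ are distinct atoms, so I may choose an atom ordering of $[w,y]$ starting with $a_1=u$ and $a_2=v$. By hypothesis this ordering is a recursive atom ordering, so axiom (ii) applied with $i=1$, $j=2$, and $y'=y$ (legitimate since $a_1,a_2<y$) produces some $k<2$ and some $z\le y$ with $z$ covering both $a_k$ and $a_j$. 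The constraint $k<2$ forces $k=1$, so $z$ covers $u=a_1$ and $v=a_2$, and $z$ lies in $[w,y]\subseteq[x,y]$. This is precisely semimodularity of $[x,y]$, and since $[x,y]$ was arbitrary $P$ is totally semimodular.

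The main obstacle is not computation but bookkeeping around the quantifier structure of axiom (ii). In the forward direction one must notice that the auxiliary upper bound $y'$ need not be the top of $[x,y]$, so semimodularity must be invoked inside an arbitrary subinterval, which is exactly where the strength of \emph{total} semimodularity is used. In the backward direction one must recognise the freedom to place the two chosen covers of $w$ as the first two atoms of an ordering, so that the constraint $k<j=2$ collapses $k$ to $1$ and pins the common cover $z$ to the pair $u,v$ we actually care about; without this placement, axiom (ii) would only produce a common cover of $a_j$ with some earlier atom $a_k$ that might not be one of the two given covers of $w$.
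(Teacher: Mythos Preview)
Your proof is correct and, for the forward (``only if'') direction, follows essentially the same argument as the paper: induction on length, with total semimodularity supplying the common cover for axiom (ii) and the inductive hypothesis on the shorter interval $[a_j,y]$ handling axiom (i). You are in fact a bit more careful than the paper in applying semimodularity inside the subinterval $[x,y']$ to ensure $z\le y'$.

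The paper explicitly proves only this direction (it is labelled ``Proof of the only if''), whereas you also supply the converse. Your argument for the ``if'' direction---placing the two given covers $u,v$ of $w$ as the first two atoms of $[w,y]$ so that axiom (ii) with $j=2$ forces $k=1$ and hence yields a common cover of exactly $u$ and $v$---is the standard trick and is correct; this direction simply does not appear in the paper.
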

\begin{proof}{Proof of the only if }
	Let $P$ be a totally semimodular poset with length greater than $1$. If $[x,y] \neq P$, then $[x,y]$ is totally semimodular,  and by induction every atom ordering of $[x,y]$ is a recursive atom ordering.
	
	Let $a_1, a_2, \ldots, a_n$ be any atom ordering in $P$, since every atom ordering in $[a_j, \hat{1}]$ is recursive, order the atoms of $[a_j, \hat{1}]$ so that those that cover $a_i$ for some $i < j$ come first. Let $y \geq a_i, a_j$. Since $P$ is totally semimodular, there is a $z \in P$ that covers both $a_i$ and $a_j$.   
\end{proof}

\subsection{Regular cell complexes}

\begin{defn}
	A \textit{regular CW complex} $X$ is a collection of disjoint open cells $\{e_{\alpha}\}$ whose union is $X$ such that :
	\begin{enumerate}[(i)]
		\item $X$ is Haursdorff
		
		\item For each open $m$-cell $e_\alpha$, there exists a continuous map $f_\alpha : D^m \rightarrow X$ that is a homeomorphism onto $\overline{e_\alpha}$ , maps the interior of $D^m$ onto $e_\alpha$, and maps the boundary $\partial D^m$ into a finite union of open cells, each of dimension less than $m$.
	\end{enumerate} 
\end{defn}

Let $\{e_{\alpha}\}$ be a regular cell decomposition of $X$ and $F(X)$ the face poset of the decomposition, with $e_\alpha \leq e_\tau$ if and only if $e_\alpha \subseteq \partial \overline{e_\tau}$. The complex $\|F(X)\|$ is homeomorphic to $X$.

\begin{thm}\label{bjorner}(\cite{anders:wachs})
	If a poset $P$ containing $\hat{0}$ and $\hat{1}$ is thin and admits a recursive atom ordering, then $P$ is the augmented face poset of a regular cell decomposition of a $PL$ sphere. 
\end{thm}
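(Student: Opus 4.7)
The plan is to induct on the length $n$ of $P$. The base case is immediate: length $1$ gives $P=\{\hat{0},\hat{1}\}$, the augmented face poset of the empty $(-1)$-sphere; length $2$ forces, by thinness, exactly two atoms, so $P$ is the augmented face poset of the decomposition of $S^0$ into two $0$-cells. For the inductive step I would split the work into two stages: first exhibit $\|P\setminus\{\hat{0},\hat{1}\}\|$ as a PL $(n-2)$-sphere, and then describe the cells of the regular decomposition in terms of the poset structure.

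For the first stage, a standard consequence of the Bj\"{o}rner--Wachs theory underlying Definition \ref{raon} is that a recursive atom ordering of a bounded graded poset induces a shelling of its order complex $\|P\setminus\{\hat{0},\hat{1}\}\|$, which is pure of dimension $n-2$. Thinness of $P$ translates into the pseudomanifold condition on this shellable complex: every length-$2$ interval $[z,\hat{1}]$ has exactly four elements, which says precisely that each $(n-3)$-simplex of $\|P\setminus\{\hat{0},\hat{1}\}\|$ lies in exactly two $(n-2)$-simplices. The Danaraj--Klee theorem then forces any shellable pure pseudomanifold of dimension $d$ to be a PL $d$-sphere, so $\|P\setminus\{\hat{0},\hat{1}\}\|$ is a PL $(n-2)$-sphere.

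For the second stage, I would declare, for each $x\in P$ with $\hat{0}<x<\hat{1}$ of rank $k$, that the closed cell is the subcomplex $\overline{e_x}:=\|(\hat{0},x]\|$ of $\|P\setminus\{\hat{0}\}\|$ and the open cell $e_x$ is its relative interior. Since $\|(\hat{0},x]\|$ is the simplicial cone on $\|(\hat{0},x)\|$ with apex $x$, it is a PL $(k-1)$-ball precisely when $\|(\hat{0},x)\|$ is a PL $(k-2)$-sphere; this latter assertion is the conclusion of the theorem applied to the interval $[\hat{0},x]$, of length less than $n$, and so follows by induction. The incidence relation $\overline{e_y}\subseteq \overline{e_x}$ iff $y\leq x$ is built into the subcomplex definition, and the disjoint union $\bigsqcup_{\hat{0}<x<\hat{1}} e_x$ agrees with the PL sphere produced in the first stage, yielding the desired regular cell decomposition.

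The principal obstacle is feeding the induction to lower intervals: thinness of $[\hat{0},x]$ is inherited from $P$, but Definition \ref{raon}(i) only promises RAO on the upper intervals $[a_j,\hat{1}]$, not on the lower intervals $[\hat{0},x]$. The cleanest remedy is to strengthen the inductive hypothesis to total semimodularity, where Theorem \ref{bw} ensures that every atom ordering of every interval is an RAO, so the induction on $[\hat{0},x]$ runs without friction. In Sections \ref{sec:shell_1} and \ref{sec:shell_2} the author verifies exactly this framework for $\mathrm{MacP}(2,n)_{\leq M}\cup\{\hat{0}\}$ and $\mathrm{MacP}(1,2,n)_{\leq (N,M)}\cup\{\hat{0}\}$, so Theorem \ref{bjorner} as stated is sufficient for the paper's applications.
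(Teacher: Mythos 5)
The paper does not prove Theorem~\ref{bjorner}; it simply cites Bj\"{o}rner--Wachs, so there is no in-paper proof to compare against, and your sketch must be assessed on its own. The overall two-stage plan (shellable pseudomanifold gives a PL sphere; then realize each $\overline{e_x}$ as $\|(\hat{0},x]\|$ and induct on lower intervals) is the standard route. But the obstacle you flag in the last paragraph is a genuine gap, and the remedy you propose does not close it: strengthening the inductive hypothesis to total semimodularity proves only a restricted statement. It happens to suffice for the paper's applications, since both $\mathrm{MacP}(2,n)_{\leq M}\cup\{\hat{0}\}$ and $\mathrm{MacP}(1,2,n)_{\leq (N,M)}\cup\{\hat{0}\}$ are shown to have all bounded intervals totally semimodular, but it is not the theorem as stated, which assumes only thinness and the existence of one recursive atom ordering of $P$ itself.

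The correct repair is to invoke the Bj\"{o}rner--Wachs equivalence: for a bounded graded poset, admitting a recursive atom ordering is equivalent to CL-shellability, and CL-shellability manifestly passes to every interval $[\hat{0},x]$ because a chain labeling of rooted intervals of $P$ restricts to one of $[\hat{0},x]$. Thus $\|(\hat{0},x)\|$ is shellable for every $x>\hat{0}$, and combined with thinness of $[\hat{0},x]$ (inherited from $P$) the Danaraj--Klee argument makes $\|(\hat{0},x)\|$ a PL sphere and $\|(\hat{0},x]\|$ a PL ball; your stage-two bookkeeping then goes through. One smaller imprecision in stage one: the pseudomanifold condition for $\|P\setminus\{\hat{0},\hat{1}\}\|$ requires thinness of \emph{all} length-$2$ intervals $[z,w]$ of $P$ (including $z=\hat{0}$ and $w=\hat{1}$), not only those of the form $[z,\hat{1}]$, since a codimension-one face is obtained by deleting any single element $x_i$ from a maximal chain, and the completions are governed by the interval $[x_{i-1},x_{i+1}]$.
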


\subsubsection{Cell Collapse}

Hersh in \cite{hersh:pat} introduced a general class of collapsing maps which may be performed sequentially on a polytope and preserving homeomorphism type. Each such map is defined by first covering a polytope face with a family of parallel lines or generally a family of parallel-like segments across which the face is collapsed. An example \cite{hersh:pat} of such a collapsing map is given below:

\begin{example} (\cite{hersh:pat})
Let $\Delta_2$ be the convex hull of $(0,0), (1,0), (0, 1/2)$ in $\mathbb{R}^2$, and let $\Delta_1$ be the convex hull of $(0,0)$ and $(1,0)$ in $\mathbb{R}^2$. There is a continous and surjective function $g: \mathbb{R}^2 \rightarrow \mathbb{R}^2$ that acts homeomorphically on $\mathbb{R}^2\setminus \Delta_2$ sending it onto $\mathbb{R}^2 \setminus \Delta_1$. The simplex $\Delta_2$ is covered by vertical line segments with an end point in $\Delta_1$. The map $g$ has the property that it  maps each vertical line segment to its end point in $\Delta_1$.

Let $R = \{(x,y): -1 \leq x \leq 1, 0 \leq y \leq 1\}$. For $0 \leq x \leq 1$ and $0 \leq y \leq -x/2 + 1/2$, let $g(x,y) = (x,0)$. For $0 \leq x \leq 1$ and $-x/2 + 1/2 \leq y \leq 1$, let $g(x,y) = (x, \frac{y- 1/2+x/2}{1/2+x/2})$. For $-1 \leq x \leq 0$ and $0 \leq y \leq -x/2 + 1/2$, let $h(x,y) = (x, y\frac{-x}{-x/2+1/2})$. For $-1 \leq x \leq 0$ and $-x/2 + 1/2 \leq y \leq 1$, let $g(x,y) = (x, -1+2y)$. Let $g$ acts as the identity outside $R$.

\end{example}

Hersh gave a formal definition of a collapsing map below
\begin{defn}(\cite{hersh:pat})
Let $g: X \rightarrow Y$ be a continuous, surjective function with the quotient topology on $Y$. That is, open sets in $Y$ are the sets whose inverse images under $g$ are open in $X$. We call such a map $g$ an \textit{identification map}.
\end{defn}

\begin{defn} (\cite{hersh:pat})
Given a finite regular CW complex $K$ on a set $X$ an an open cell $L$ in $K$, define a \textit{face collapse or cell collapse} of $\overline{L}$ onto $\overline{\tau}$ for $\tau$ an open cell contained in $\partial L$ to be an identification map $g : X \rightarrow X$ such that:
\begin{enumerate}[(i)]
    \item Each open cell of $\overline{L}$ is mapped surjectively onto an open cell of $\overline{\tau}$ with $L$ mapped onto $\tau$.
    \item $g$ restricts to a homeomorphism from $K \setminus \overline{L}$ to $K \setminus \overline{\tau}$ and acts homeomorphically on $\overline{\tau}$.
    \item The images under $g$ of the cells of $K$ form a regular CW complex with new characteristic maps obtained by composing the original characterisitc maps of $K$ with $g^{-1}: X \rightarrow X$ for those cells of $K$ contained either in $\overline{\tau}$ or in $K \setminus \overline{L}$.
\end{enumerate}
\end{defn}

\begin{defn}(\cite{hersh:pat})
Let $K_0$ be a convex polytope, and let $\mathcal{C}_i^0$ be a family of parallel line segments covering a closed face $L_i^0$ in $\partial K_0$ with the elements of $\mathcal{C}_i^0$ given by linear functions $c: [0,1] \rightarrow L_i^0$. Suppose that there is a pair of closed faces $G_1, G_2$ in $\partial L_i^0$ with $c(0) \in G_1$ and $c(1) \in G_2$ for each $c \in \mathcal{C}_i^0$ and there is a composition $g_i \circ \cdots \circ  g_1$ of face collapses on $K_0$ such that:
\begin{enumerate}[(i)]
    \item $g_i \circ \cdots \circ  g_1$ acts homeomorphically on $\mathrm{int}(L_i^0)$
    \item For each $c \in \mathcal{C}_i^0$, $g_i \circ \cdots \circ g_1$ either sends $c$ to a single point or acts homeomorphically on $c$.
    \item Suppose $g_i \circ \cdots \circ g_1(c(t)) = g_i \circ \cdots \circ g_1(c'(t'))$ for $c\neq c' \in \mathcal{C}_i^0$ and some $(t,t')  \neq (1,1)$. Then $t = t'$, and for each $t \in [0,1]$ we have $g_i \circ \cdots \circ g_1(c(t)) = g_i \circ \cdots \circ g_1(c'(t'))$.
\end{enumerate}
Then call $\mathcal{C}_i = \{g_i \circ \cdots \circ g_1(c(t))| c \in \mathcal{C}_i^0\}$ a family of parallel-like curves on the closed cell $L_i = g_i \circ \cdots \circ g_1(L_i^0)$ of the finite regular CW complex $K_i = g_i \circ \cdots \circ g_1(K_0)$.
\end{defn}

\begin{thm}(\cite{hersh:pat})\label{hersh:thm}
Let $K_0$ be a convex polytope. Let $g_1, \ldots, g_i$ be collapsing maps with $g_j: X_{K_{j-1}} \rightarrow X_{K_j}$ for regular CW complexes $K_0, K_1, \ldots, K_i$ all having the underlying space $X$. Suppose that there is an open cell $L_i^0$ in $\partial K_0$ upon which $g_i \circ \cdots \circ g_1$ acts homeomorphically and a collection $\mathcal{C} = \{g_i \circ \cdots \circ g_1(c)| c \in \mathcal{C}_i^0\}$ of parallel-like curves covering $\overline{L_i}$ for $L_i = g_i \circ \cdots \circ g_1(L_i^0) \in K_i$. Then there is an identification map $g_{i+1}: X_{K_i} \rightarrow X_{K_{i+1}}$ specified by $\mathcal{C}$.  
\end{thm}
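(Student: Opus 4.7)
The plan is to construct $g_{i+1}$ explicitly, collapsing each parallel-like curve in $\mathcal{C}$ to one of its endpoints and leaving the complement of $\overline{L_i}$ fixed, then verify the three axioms of a face collapse and the quotient-topology condition.

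First, I would transport parametrizations through the prior collapses. Each $c \in \mathcal{C}$ arises as $g_i \circ \cdots \circ g_1(c^0)$ for some linear segment $c^0 : [0,1] \to L_i^0$ in $\mathcal{C}_i^0$, and so inherits a (possibly degenerate) parametrization $c : [0,1] \to \overline{L_i}$. The hypothesis on $\mathcal{C}_i^0$ gives closed faces $G_1, G_2 \subset \partial L_i^0$ with $c^0(0) \in G_1$ and $c^0(1) \in G_2$; let $H_1$ denote the image of $G_1$ under $g_i \circ \cdots \circ g_1$, and take $H_1$ as the target for the collapse. Define
\[
g_{i+1}(x) \;=\; \begin{cases} c(0) & \text{if } x = c(t) \text{ for some } c \in \mathcal{C},\ t \in [0,1], \\ x & \text{if } x \notin \overline{L_i}. \end{cases}
\]
Condition (iii) in the definition of parallel-like curves makes this well-defined: if $x$ lies on two distinct curves $c, c'$, then either the two parametrizations agree for every $t$ or both are collapsed to a point, so $c(0)$ and $c'(0)$ coincide. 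Continuity on $\overline{L_i}$ comes from the continuous dependence of the curves on their starting points, together with condition (ii) (each $c$ is either collapsed or acted on homeomorphically) and the continuity of the earlier $g_j$; continuity across $\partial \overline{L_i}$ matches the identity extension, since endpoints of curves already lie in $H_1$ or are pinned by condition (iii).

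Next, I would verify the three conditions in the definition of face collapse. Condition (i) follows because each open cell of $\overline{L_i}$ is swept by parallel-like subfamilies and therefore maps surjectively onto a cell of $\overline{H_1}$, with $L_i$ itself mapped onto the open cell $\tau_{i+1} \subset \partial L_i$ cut out by $H_1$. Condition (ii) is built in: $g_{i+1}$ acts as the identity off $\overline{L_i}$ and on $\overline{\tau_{i+1}}$, since points of $\tau_{i+1}$ are endpoints $c(0)$ fixed by the assignment. Finally, endow $X_{K_{i+1}}$ with the quotient topology induced by $g_{i+1}$; the new characteristic maps are obtained by post-composing the original characteristic maps of $K_i$ with $g_{i+1}$ on cells inside $\overline{L_i}$ and with $g_{i+1}^{-1}$ on cells outside $\overline{L_i} \setminus \overline{\tau_{i+1}}$.

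The main obstacle is condition (iii) in the face-collapse definition, namely that the images of the cells of $K_i$ under $g_{i+1}$ form a regular CW complex. One must confirm that the induced characteristic maps remain homeomorphic embeddings of closed disks onto closed cells, especially at the interface where distinct curves are identified in $\overline{\tau_{i+1}}$. Here the uniform parametrization built into condition (iii) of parallel-like curves is essential: it guarantees that the equivalence relation defining $g_{i+1}$ restricts on each cell closure to the relation collapsing a trivial segment bundle, which after quotient yields a disk. Once that is established, regularity propagates from $K_i$ to $K_{i+1}$, and $g_{i+1}$ is the desired identification map specified by $\mathcal{C}$.
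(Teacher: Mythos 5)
The paper does not prove Theorem~\ref{hersh:thm}; it is cited from Hersh's work and used as a black box, so there is no internal proof to compare against. Evaluating your proposal on its own merits, there is a genuine gap in the construction of $g_{i+1}$.

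You define $g_{i+1}$ to collapse each curve $c$ to $c(0)$ and to act as the identity off $\overline{L_i}$. This cannot satisfy condition (ii) of the face-collapse definition, which requires $g_{i+1}$ to restrict to a homeomorphism from $K_i \setminus \overline{L_i}$ onto $K_i \setminus \overline{\tau_{i+1}}$. With your definition, the image of $K_i \setminus \overline{L_i}$ is $K_i \setminus \overline{L_i}$ itself, a proper subset of $K_i \setminus \overline{\tau_{i+1}}$ since $\overline{\tau_{i+1}} \subsetneq \overline{L_i}$. The map therefore fails to be surjective onto the complement of the target cell, and the quotient $K_i/\!\!\sim$ would have the wrong homeomorphism type. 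Hersh's own example in the paper makes this point explicit: the collapsing map $g$ for $\Delta_2 \to \Delta_1$ is the identity only \emph{outside} a larger rectangle $R$; inside $R \setminus \Delta_2$ it performs a nontrivial stretching so that $\mathbb{R}^2 \setminus \Delta_2$ is carried homeomorphically onto the strictly larger set $\mathbb{R}^2 \setminus \Delta_1$. Your construction omits exactly this collar deformation near $\overline{L_i}$, which is the nontrivial part of the argument: one must push a neighborhood of $\overline{L_i}$ in $K_i$ inward to "fill the hole" left by the collapse, while keeping the deformation compatible with the earlier quotients $g_i \circ \cdots \circ g_1$ and with the CW structure of $K_i$. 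Without that, the claim that "condition (ii) is built in" is simply false, and the downstream regularity verification inherits the same defect.

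A secondary, more minor issue: you assert continuity of $g_{i+1}$ on $\overline{L_i}$ from "continuous dependence of the curves on their starting points," but this is not a hypothesis of the theorem; it has to be deduced from the linearity of the original segments in $\mathcal{C}_i^0$, the continuity of $g_i \circ \cdots \circ g_1$, and condition (iii) of the parallel-like definition (which controls when two curve parametrizations are identified). That chain of reasoning is plausible but needs to be spelled out rather than invoked.
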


The sequence of cell collapses needed in the proof of Theorem \ref{top} will be of the form illustrated by the example below.
\begin{example}\label{ex}
	Let $K_0 = \Delta^3  \times [0,1] \times [1,2]\times [0,1]$, where $\Delta^3 = \{(x_1, x_2, x_3): 0 \leq x_1 \leq x_2 \leq x_3 \leq \frac{\pi}{2} \}$ is a $3$-simplex. Let $L_0 = \Delta^3 \times \{0\} \times [1,2] \times [0,1]$ be a face of $K_0$. Let $G_1^0 = \{(0, x_2, x_3): 0= x_1 \leq x_2 \leq x_3 \leq \frac{\pi}{2}\} \times \{0\} \times [1,2] \times [0,1]$ and $G_2^0 = \{(x_1, x_2, x_3): 0 \leq x_1 = x_2 \leq x_3 \leq \frac{\pi}{2}\} \times \{0\} \times [1,2] \times [0,1]$. Then $G_1^0$ and $G_2^0$ are faces of $L_0$. 
	
	Let $(x,r) = ((0, x_2, x_3), 0, r_2, r_3) \in G_1^0$, and $c_{(x,r)}$ is a curve defined by:
	$$c_{(x,r)}: [0,1] \rightarrow L_0 : t \mapsto ((t\cdot x_2, x_2, x_3), (0, r_2, r_3)).$$
	\end{example}
The end point of the curve $c_{(x,r)}$ lies in $G_2^0$. Let $\mathcal{C}_0 = \{c_{(x,r)}: (x,r) \in G_1^0\}$. Then $\mathcal{C}_0$ is a collection of parallel-like line segments covering the face $L_0$. By Theorem \ref{hersh:thm}, there is an identification map $g_1: K_0 \rightarrow K_0$ specified by $\mathcal{C}_0$. The map has the property that the curves in $\mathcal{C}_0$ are mapped to their end points in $G_2^0$. 

Let $\sim_1$ be a relation on $K$ defined by $(x,r) \sim_1 (x', r')$ if and only if $g_1(x, r) = g_1(x', r')$. Then the quotient $K_1 = K/\sim_1$ is homeomorphic to $K$.  

Also, let $L_1 = \Delta^3 \times [0,1] \times [1,2] \times \{0\}$ be a face of $K_0$, $G_1^1 = \{(x_1, x_2, x_3): x_1 \leq x_2 \leq x_3 \leq \frac{\pi}{2}\} \times [0,1] \times [1,2] \times \{0\}$, and $G_2^1 = \{(x_1, x_2, \frac{\pi}{2}): x_1 \leq x_2 \leq x_3 = \frac{\pi}{2}\} \times [0,1] \times [1,2] \times \{0\}$. Then $G_1^1$ and $G_2^1$ are faces of $L_1$. Let $\mathcal{C}^0_1$ be a collection of parallel line segments covering $L_1$ and with endpoints in $G_1^1$ and $G_2^1$. Then $\mathcal{C}_1 = \{g_1(c)| c \in \mathcal{C}_1^0\}$ is a collection of parallel-like segments covering $g_1(L_1)$.  By Theorem \ref{hersh:thm}, there is an identification map $g_2: K_1 \rightarrow K_1$ specified by $\mathcal{C}_1$. Let $\sim_2$ be a relation on $K_1$ defined by $[(x,r)] \sim_2 [(x', r')]$ if and only if $g_2([(x,r)]) = g_2([(x', r')])$. Hence, $K_2 = K_1/\sim_2$ is homeomorphic to $K_1$.

In Theorem \ref{top}, we will apply homeomorphism-preserving cell collapses on the boundary of a closed ball of the form $$B_M = \{(\theta_1, \theta_2, \ldots, \theta_{n_1}) : 0 \leq \theta_1 \leq \theta_2 \cdots \leq \theta_{n_1} \leq\frac{\pi}{2}\} 
	\times $$ $$\{(\eta_1, \eta_2, \ldots, \eta_{n_2}) : \frac{\pi}{2} \leq \eta_1 \leq \eta_2 \leq \cdots \leq \eta_{n_2}\leq\pi \} \times [a ,b]^{L_M-2}.$$The identification $\sim$ as in Example \ref{ex} is given here as in Figure \ref{viz_0}.

\begin{figure}[!htb]
	 	\centering
	 \begin{subfigure}[t]{0.5\textwidth}
	 	\begin{tikzpicture}
	 	\path[->] (0,0) edge node[at end, right = 0.5mm]{$3$} (3/1.5,1/1.5) ;
	 	\path[->] (0,0) edge node[at end, above]{$7$} (0,2.5) ;
	 	\path[->] (0,0) edge node[at end, right= 0.5mm]{$1$} (2.5,0) ;
	 	\path[->] (0,0) edge node[at end, above]{$4$} (3/1.5,2/1.5)  ;
	 	\path[->] (0,0) edge node[at end, above]{$6$} (1.1, 1.7) ;
	 	\path[dashed][->] (0,0) edge node[at end, below=1mm, right]{$2$} (2.0, 0.50) ;
	 	\path[dashed][->] (0,0) edge node[at end, above]{$5$} (1.45, 1.6);
	 	\path[->] (0,0) edge node[at end, above,  right = 15mm]{$\sim$} (3/1.5, 2/1.5);
	 	\end{tikzpicture}
	 	\caption{$\theta_5 = a, r_5 = 0, r_2 = 0$}
	\end{subfigure}
	~
	    \begin{subfigure}[t]{0.5\textwidth}
	 	\begin{tikzpicture}
	 	\path[->] (0,0) edge node[at end, right = 0.5mm]{$3$} (3/1.5,1/1.5) ;
	 	\path[->] (0,0) edge node[at end, above]{$7$} (0,2.5) ;
	 	\path[->] (0,0) edge node[at end, below]{$1$} (2.5,0) ;
	 	\path[->] (0,0) edge node[at end, above, right = 0.5mm]{$4$} (3/1.5,2/1.5)  ;
	 	\path[->] (0,0) edge node[at end,above]{$6$} (1.1, 1.7) ;
	 	\path[dashed][->] (0,0) edge node[at end, above = 0.5mm ]{$5$} (1.8, 1.4);
	 	\path[->] (0,0) edge node[at end, above,  right = 10mm]{$\equiv$} (3/1.5, 2/1.5);
	 	\path[dashed][->] (0,0) edge node[at end, right]{$2$} (2.4, 0.2);
	 	\end{tikzpicture}
	 	\caption{$\theta_5 = b, r_5 = 0, r_2 = 0$}
	\end{subfigure}
	~
 	 \begin{subfigure}[t]{0.5\textwidth}
 		\begin{tikzpicture}
 		\path[->] (0,0) edge node[at end, right = 0.5mm]{$3$} (3/1.5,1/1.5) ;
 		\path[->] (0,0) edge node[at end, above = 0.07mm]{$7$} (0,2.5) ;
 		\path[->] (0,0) edge node[at end, right= 0.5mm]{$1$} (2.5,0) ;
 		\path[->] (0,0) edge node[at end, above]{$4$} (3/1.5,2/1.5)  ;
 		\path[->] (0,0) edge node[at end,above]{$6$} (1.1, 1.7) ; 
 		\end{tikzpicture}
 		\caption{$N$}
 	\end{subfigure}
	 	\caption{An identification on the boundary of $\mu^{-1}(M).$}
	 	\label{viz_0}
	 \end{figure}
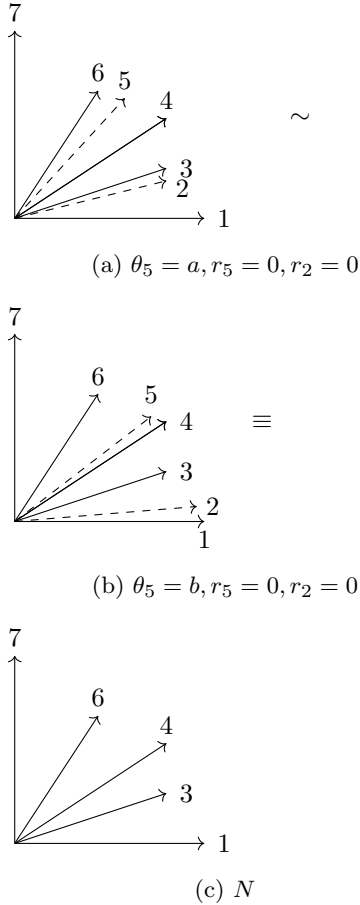
The identification as in Figure \ref{viz_0} on the boundary $r_2 = 0, r_5 = 0$ gives the collapse of the 5-simplex $\{(\theta_2, \theta_3 , \theta_4, \theta_5, \theta_6) : 0 \leq \theta_2 \leq \theta_3 \leq \cdots \theta_6 \leq \frac{\pi}{2}\}$ onto the 3-simplex $\{(\theta_2, \theta_3, \theta_4, \theta_5 , \theta_6) : 0 = \theta_2 \leq \theta_3 \leq \theta_4 = \theta_5 \leq \theta_6 \leq \frac{\pi}{2}\}$.

\subsubsection{Topological Balls}

\begin{thm}(\cite{can:jc})\label{cantrel}
   	Let $S$ be an $(n-1)$ sphere in $S^n$ and $H$ a component of $S^n - S$. If $\overline{H}$ is a topological manifold with boundary, then $\overline{H}$ is homeomorphic to an $n$-ball.
   \end{thm}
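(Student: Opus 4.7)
The plan is to reduce Cantrell's statement to Brown's generalized Schoenflies theorem, which asserts that a bicollared $(n-1)$-sphere in $S^n$ bounds an $n$-ball on each side. The reduction proceeds by pushing $\partial \overline{H} = S$ slightly into the interior of $\overline{H}$ along a collar, producing a bicollared sphere in $S^n$ to which Brown's theorem applies directly.

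First, since $\overline{H}$ is a compact topological $n$-manifold with boundary $S$, Brown's collar theorem supplies an embedding $c : S \times [0,1] \to \overline{H}$ with $c(x,0) = x$ whose image is a closed neighborhood of $S$ in $\overline{H}$. Set $S' := c(S \times \{1/2\})$. Because the interior $H$ of $\overline{H}$ is open in $S^n$, the restriction of $c$ to $S \times (1/4, 3/4)$ is a topological embedding of $S \times (-1, 1)$ into $S^n$ that carries $S \times \{0\}$ to $S'$; hence $S'$ is a bicollared $(n-1)$-sphere in $S^n$. Brown's generalized Schoenflies theorem then produces a closed $n$-ball
\[
B \;:=\; c\bigl(S \times [1/2, 1]\bigr) \,\cup\, \bigl(\overline{H} \setminus c\bigl(S \times [0, 1)\bigr)\bigr)
\]
sitting inside $\overline{H}$ with $\partial B = S'$.

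Next, I would reassemble $\overline{H}$ as $c(S \times [0, 1/2]) \cup B$, the two pieces meeting along $S'$. The first piece is a cylinder $S^{n-1} \times [0, 1]$ attached to the $n$-ball $B$ along its boundary sphere. Since attaching a collar to the boundary of an $n$-ball yields another $n$-ball (by a standard radial identification that absorbs the collar into the ball), $\overline{H}$ is homeomorphic to $B^n$, as required.

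I anticipate the main obstacle to be the appeal to Brown's generalized Schoenflies theorem itself, which is the sole deep topological input; once that theorem is available, the remaining argument is essentially bookkeeping built on Brown's collar theorem. A minor subtlety is verifying that the bicollar of $S'$ in the interior $H$ genuinely extends to a bicollar in $S^n$, but this is immediate because $H$ is open in $S^n$.
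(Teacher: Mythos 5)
The paper does not prove this statement at all; it simply cites it to Cantrell (\cite{can:jc}) and uses it as a black box, so there is no in-paper argument to compare against. Your proposed deduction is correct and is the standard way to obtain this corollary from Brown's work: $\overline{H}$ is a compact manifold whose manifold boundary must equal $S$ (its topological interior is exactly $H$ by invariance of domain, and Jordan--Brouwer gives $\overline{H}\setminus H = S$); Brown's collar theorem (which the paper separately records as Theorem~\ref{morton}) produces a collar $c:S\times[0,1]\to\overline{H}$; the middle slice $S'=c(S\times\{1/2\})$ inherits a bicollar in $S^n$ because $H$ is open and $c$ restricted to $S\times(1/4,3/4)$ is an embedding of a boundaryless $n$-manifold, hence an open map by invariance of domain; Brown's generalized Schoenflies theorem then makes the inner closed complementary domain $B=\overline{H}\setminus c(S\times[0,1/2))$ a closed $n$-ball; and reattaching the half-collar $c(S\times[0,1/2])\cong S'\times I$ along $S'=\partial B$ recovers $\overline{H}$ up to homeomorphism since adding an external collar to a compact manifold with boundary does not change its homeomorphism type. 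The one place you compress slightly is the identification of $B$ with a closed complementary domain of $S'$; this does require the small Jordan--Brouwer-type check (as above) that $U\cup c(S\times[0,1/2))$ lies entirely on the other side of $S'$, but that check goes through and the overall argument is sound.
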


   \begin{cor}\label{corb} Suppose $\mathrm{int}(D^n)$ is the interior of a closed unit ball centered at the origin. If $\overline{H} \subset \mbox{int}(D^n)$ is a topological manifold whose boundary $S$ is an $(n-1)$ sphere, then $\overline{H}$ is homeomorphic to a closed unit ball $D^n$.
   \end{cor}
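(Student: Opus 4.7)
The plan is to reduce the statement directly to Theorem \ref{cantrel} by embedding $D^n$ into $S^n$. First I would view $S^n$ as the one-point compactification of $\mathbb{R}^n$, so that $\overline{H}\subset \mathrm{int}(D^n)\subset \mathbb{R}^n\subset S^n$, and the $(n-1)$-sphere $S=\partial\overline{H}$ sits as a topologically embedded $(n-1)$-sphere in $S^n$. The goal is then to identify $\overline{H}$ with the closure of a component of $S^n\setminus S$, so that Theorem \ref{cantrel} applies.

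Once $S\subset S^n$ is established, I would invoke the Jordan--Brouwer separation theorem to obtain exactly two components $U_1,U_2$ of $S^n\setminus S$. The candidate for a component matching $\overline{H}$ is $H:=\overline{H}\setminus S$, which equals the manifold interior of $\overline{H}$ and is therefore open in $S^n$ by invariance of domain. To show $H$ is a full component, I would first verify that $H$ is connected: each connected component of the manifold $\overline{H}$ is a manifold with boundary whose boundary is a clopen subset of $S$, and since the sphere $S$ is connected all of it lies in one component $\overline{H}_0$; any other component $\overline{H}_1$ would be a compact boundaryless $n$-manifold, hence a compact open subset of $\mathbb{R}^n$, which forces $\overline{H}_1=\emptyset$.

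Now $H$ is connected and contained in $S^n\setminus S$, so $H\subset U_i$ for some $i$. Since $H$ is open in $S^n$ and its topological frontier in $S^n$ equals $S$ (which is disjoint from $U_i$), $H$ is also closed in $U_i$; connectedness of $U_i$ combined with $H\neq\emptyset$ then yields $H=U_i$. Thus $\overline{H}$ is the closure in $S^n$ of a component of $S^n\setminus S$, and it is by hypothesis a topological manifold with boundary, so Theorem \ref{cantrel} delivers $\overline{H}\cong D^n$.

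The only substantive step is the verification that $H$ is all of $U_i$ rather than merely a proper subset, which is what the combination of the connectedness argument and the open-and-closed argument accomplishes; the rest is a matter of arranging the ambient embedding $D^n\hookrightarrow S^n$ so that Theorem \ref{cantrel} can be applied verbatim.
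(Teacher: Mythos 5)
Your argument is correct and is the natural way to deduce the corollary from Theorem \ref{cantrel}; the paper itself gives no proof and treats the deduction as immediate, so there is no competing approach to compare against. The only thing to flag is that both your proof and the corollary's statement tacitly assume $\overline{H}$ is compact (equivalently, closed in $S^n$). You use this in two places: to conclude that a hypothetical boundaryless component $\overline{H}_1$ is compact (it is closed in the compact $\overline{H}$), and to conclude that $\mathrm{cl}_{S^n}(H)=\overline{H}$ so that the frontier of $H$ in $S^n$ is exactly $S$. Without compactness the statement is false (take $\overline{H}=\{x: 1/2\le |x|<3/4\}$, an $n$-manifold whose boundary is $\{|x|=1/2\}\cong S^{n-1}$ but which is not a ball), so this hypothesis is genuinely needed; in the paper's application inside Proposition \ref{ball} the set in question is compact, so it is a fair omission, but it would be worth stating explicitly.
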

   
   \begin{defn}
   	Let $M$ be a topological manifold  and $S \subseteq M$. The subset $S$ is said to be \textit{collared} in $M$ if there is a neighborhood $N(S) \subseteq M$ and a homeomorphism $h : S \times I \rightarrow N(S)$ satisfying $h(x,1) = x$.
   \end{defn}
\begin{thm}(\cite{brown:mort}) \label{morton}
	The boundary of a topological manifold $M$ with boundary is collared in $M$.
\end{thm}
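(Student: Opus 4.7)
The plan is to prove this by a local-to-global construction: first produce a collar on a neighborhood of each boundary point from a manifold chart, then patch these local collars together using Brown's technique of an infinite composition of self-homeomorphisms of $M$ whose supports are controlled near $\partial M$.

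First I would observe that for each $p \in \partial M$, the manifold-with-boundary structure provides a chart $\phi : U \to \mathbb{R}^{n-1} \times [0, \infty)$ carrying $U \cap \partial M$ homeomorphically onto an open subset of $\mathbb{R}^{n-1} \times \{0\}$. In this chart the map $(x,t) \mapsto (x,t)$ furnishes an obvious local collar on $\mathbb{R}^{n-1} \times \{0\}$, which I would pull back via $\phi^{-1}$ to obtain, for every $p \in \partial M$, an open neighborhood $V_p \subset \partial M$ of $p$ together with a homeomorphism $h_p : V_p \times [0,1) \to N(V_p) \subset M$ with $h_p(x,0) = x$.

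The heart of the argument is the gluing lemma: if $A, B \subset \partial M$ are open subsets each collared in $M$ by collars $c_A, c_B$, then $A \cup B$ is collared in $M$. To prove it I would construct a homeomorphism of $M$, supported in a small neighborhood of $\overline{A \cap B}$ inside the union of the two collar images, that carries the $c_A$-collar over $A \cap B$ onto the $c_B$-collar; the resulting splice extends $c_B$ across $A \setminus B$ and produces a collar over $A \cup B$. The delicate technical point is Brown's device of writing this self-homeomorphism as a countable composition $\cdots \circ \psi_2 \circ \psi_1$, where each $\psi_k$ is supported in a progressively thinner sliver pressed against $\partial M$ so that the composition is well defined and continuous on all of $M$ while fixing a neighborhood of $\partial M$ on which the splicing requires no further adjustment.

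Finally I would choose a locally finite cover $\{V_i\}_{i \in I}$ of $\partial M$ by open sets each collared in $M$ (using paracompactness of $M$, which follows from the standing Hausdorff/second-countable assumption for a topological manifold) and iterate the gluing lemma. In the compact case only finitely many steps are needed; in general one applies the lemma inductively along the cover, shrinking the successive collars in thickness fast enough that the limit defines a genuine homeomorphism $\partial M \times [0,1) \to N(\partial M) \subset M$ restricting to the identity on $\partial M \times \{0\}$. The main obstacle is the gluing lemma: extending a homeomorphism defined on a piece of $\partial M$ to one of $M$ while keeping tight control on its support is exactly where Brown's infinite-composition trick is essential; the local construction and the cover-and-iterate scheme are otherwise formal.
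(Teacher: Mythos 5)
The paper states this as a classical result with a citation to Brown and gives no proof of its own, so there is no internal argument to compare against. Your sketch reproduces the overall architecture of Brown's original 1962 proof: produce local collars from boundary charts, establish a gluing lemma that splices two overlapping collars into one, and iterate over a locally finite cover while shrinking thicknesses so the limit converges to a genuine homeomorphism. That plan is sound, but the gluing lemma you flag as "the heart of the argument" is also where nearly all of the technical difficulty lives, and your outline leaves it essentially unexecuted: the self-homeomorphism splicing $c_A$ onto $c_B$ must be supported so as not to disturb the part of the collar already built, and organizing the infinite composition so that it is well-defined, continuous, and bijective on all of $M$ is the delicate core of Brown's paper, not a formality. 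Worth knowing: the now-standard and considerably shorter proof is Connelly's (1971). One attaches an external collar to form $M^+ = M \cup_{\partial M} (\partial M \times [0,1])$ and then proves $M^+ \cong M$ by pushing the external collar into $M$ one coordinate chart at a time, using a partition of unity subordinate to a locally finite cover of $\partial M$ by boundary charts. An infinite composition still appears, but each factor is a push supported entirely inside a single chart and never has to be reconciled with a partially built internal collar, which dissolves the gluing lemma into routine bookkeeping. If you intend to turn your sketch into a complete proof, Connelly's route is substantially easier to carry out than a faithful execution of Brown's.
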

 
\begin{prop}\label{ball}
Suppose $G \subset X$ is homeomorphic to an open ball and $\overline{G}$ is a topological manifold whose boundary is a sphere. Then $\overline{G}$ is homeomorphic to a closed ball. 
\end{prop}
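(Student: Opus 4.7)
The plan is to retract from $\overline{G}$ to a slightly smaller compact submanifold $M'$ sitting inside the interior $G$, apply Corollary \ref{corb} to conclude $M' \cong D^n$, and then extend back to $\overline{G}$ by an elementary collar argument. Throughout I treat $\overline{G}$ as compact, which is the case in the intended applications (where $\overline{G}$ is a closed subset of a compact Grassmannian).

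Let $n = \dim G$. By hypothesis $\overline{G}$ is a topological $n$-manifold with $\partial \overline{G} \cong S^{n-1}$ and $\mathrm{int}(\overline{G}) = G$ homeomorphic to the open $n$-ball. First I would apply Theorem \ref{morton} to fix a closed collar embedding $c : \partial \overline{G} \times [0,1] \hookrightarrow \overline{G}$ with $c(x,0) = x$ on $\partial \overline{G}$, and define
$$M' = \overline{G} \setminus c\bigl(\partial \overline{G} \times [0, 1/2)\bigr).$$
Then $M'$ is a compact topological $n$-manifold whose boundary $c(\partial \overline{G} \times \{1/2\})$ is an $(n-1)$-sphere, and, crucially, $M'$ lies entirely in $G$.

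Next, since $G \cong \mathrm{int}(D^n)$, the homeomorphism $G \to \mathrm{int}(D^n)$ carries $M'$ to a compact manifold-with-sphere-boundary sitting inside the interior of the standard closed $n$-ball. Corollary \ref{corb} then immediately yields $M' \cong D^n$. To return to $\overline{G}$, observe that
$$\overline{G} = M' \cup c\bigl(\partial \overline{G} \times [0, 1/2]\bigr),$$
with the two pieces meeting along $\partial M' = c(\partial \overline{G} \times \{1/2\})$, so $\overline{G}$ is just $M'$ with a cylinder $S^{n-1} \times [0, 1/2]$ glued to its boundary via the identity. Because $M' \cong D^n$ admits a collar of its boundary, a standard collar-push homeomorphism absorbs this attached cylinder into the interior of $M'$, and we conclude $\overline{G} \cong M' \cong D^n$.

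The hard part is making sure the hypothesis of Corollary \ref{corb} is actually met, that is, that the compact region to which the corollary is applied lies inside the interior of a closed ball and not merely inside $\overline{G}$. This is exactly the purpose of cutting the collar at the strictly positive level $t = 1/2$: shrinking inward by a collar guarantees $M' \subset G \cong \mathrm{int}(D^n)$, and the trimmed collar is recovered for free in the final step.
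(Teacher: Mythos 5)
Your proposal is correct and takes essentially the same route as the paper's proof: trim a closed collar of $\partial\overline{G}$ to produce a compact submanifold sitting strictly inside $G \cong \mathrm{int}(D^n)$, invoke Corollary \ref{corb} to identify it with $D^n$, then reattach the collar. The only cosmetic difference is in the final step, where the paper writes down an explicit homeomorphism $h_4$ carrying the collar onto an annulus in $D^n$ and invokes the Gluing Lemma, whereas you appeal to the standard collar-push fact that gluing a cylinder $\partial M' \times [0,1/2]$ to a collared boundary does not change the homeomorphism type.
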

   
   \begin{proof}
   	Let $h_1 :  G \rightarrow \mbox{int}(D^n)$  be an homeomorphism to the open unit ball and $S = \partial \overline{G}$ the boundary of $G$. As $\overline{G}$ is a topological manifold whose boundary is $S$, by Theorem \ref{morton}, there is a collared neighborhood $N(S)$ of $S$ in $\overline{G}$ and a homeomorphism $h_2 : S \times I \rightarrow N(S) \subseteq \overline{G}$ with $h_2(S\times \{1\}) = S$. Let $A = h_2(S\times \{0\} )  \subset G$, and $\overline{A}$ the subset of $G$ whose boundary is $A$. 
   	
   	So $A' = h_1(\overline{A}) \subset \mbox{int}(D^n)$ is a topological manifold whose boundary is the sphere $h_1(A)$. By Corollary \ref{corb}, $A'$ is homeomorphic to a closed ball. 
   	
   	We will denote by $h_3 : A'  \rightarrow D_{\frac{1}{2}}$ the homeomorphism of $A'$ to a closed ball, where $D_{\frac{1}{2}}$ is a closed ball of radius $\frac{1}{2}$ centered at the origin. Define: $$h_4 : N(S) \equiv S \times I \rightarrow D^n : \;   (x,t) \mapsto (1 + t)\cdot h_3\circ h_1 \circ h_2(x,0) .$$ 
   	
   	$h_4$  gives a homeomorphism from $N(S) \subseteq \overline{G}$ to the annulus in $D^n$ the unit ball. The map  $h: \overline{G} \rightarrow D^n$ given by $ h = (h_3\circ h_1) \cup h_4$ is a homeomorphism by the Gluing Lemma. \cite{jim:law}  
   \end{proof}

\section{The stratification $\mu^{-1}(M)$}\label{sec:ball_1}

\begin{thm}\label{main} $\{\mu^{-1}(M) : M \in \; \mbox{MacP}(2, n)\}$ is a regular cell decomposition of $Gr(2,\mathbb{R}^n)$.
	\end{thm}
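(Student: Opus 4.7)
The plan is to verify the two conditions in the definition of a regular CW complex for the stratification $\{\mu^{-1}(M)\}$. The Hausdorff axiom is immediate from the fact that $\mathrm{Gr}(2,\mathbb{R}^n)$ is a smooth manifold, and the sets $\mu^{-1}(M)$ are pairwise disjoint with union $\mathrm{Gr}(2,\mathbb{R}^n)$ because every $2$-plane $X \in \mathrm{Gr}(2,\mathbb{R}^n)$ determines a unique rank $2$ oriented matroid $\mu(X)$ on $[n]$. The poset $\mathrm{MacP}(2,n)$ is finite, so the collection is finite. What remains is to exhibit, for each $M$, a characteristic map $f_M : D^m \to \mathrm{Gr}(2,\mathbb{R}^n)$ that is a homeomorphism onto $\overline{\mu^{-1}(M)}$, sends $\mathrm{int}(D^m)$ onto $\mu^{-1}(M)$, and sends $\partial D^m$ into a finite union of strictly lower-dimensional strata.

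To do this I would assemble the four earlier results of the paper in order. First, by Proposition \ref{prop1}, $\mu^{-1}(M)$ is homeomorphic to an open ball of some dimension $m = m(M)$; this identifies the open cells. Second, by Proposition \ref{prop2}, $\partial \overline{\mu^{-1}(M)} = \bigcup_{N < M} \mu^{-1}(N)$, which already gives the set-theoretic side of the attaching condition and, combined with the finiteness of $\mathrm{MacP}(2,n)$, ensures that the boundary lands in a finite union of lower-dimensional open cells. Third, I would invoke the main output of Section \ref{sec:top_1}, which asserts that $\overline{\mu^{-1}(M)}$ is a topological manifold whose boundary is an $(m-1)$-sphere; this is in turn underwritten by Section \ref{sec:shell_1}, where $\mathrm{MacP}(2,n)_{\leq M} \cup \{\hat{0}\}$ is shown to be thin and to admit a recursive atom ordering, so that by Theorem \ref{bjorner} it is the augmented face poset of a regular cell decomposition of a sphere. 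Finally, applying Proposition \ref{ball} with $G = \mu^{-1}(M)$ produces a homeomorphism $f_M : D^m \to \overline{\mu^{-1}(M)}$; by invariance of the manifold boundary, $f_M$ necessarily restricts to a homeomorphism on interiors, so $f_M(\mathrm{int}(D^m)) = \mu^{-1}(M)$ and $f_M(\partial D^m) = \partial \overline{\mu^{-1}(M)} = \bigcup_{N<M} \mu^{-1}(N)$. This is precisely the characteristic map required by clause (ii) of the definition of a regular CW complex.

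The real work is not in this assembly but in the inputs: the most substantial obstacle is establishing that $\overline{\mu^{-1}(M)}$ is a topological manifold with spherical boundary, which requires both the combinatorial side (recursive atom ordering and thinness of $\mathrm{MacP}(2,n)_{\leq M} \cup \{\hat{0}\}$, to identify the face poset of the sphere) and the topological side (the Hersh-style cell collapses of Theorem \ref{hersh:thm} applied to an explicit closed ball $B_M$, as previewed in Example \ref{ex}, to realize $\overline{\mu^{-1}(M)}$ as the quotient by a homeomorphism-preserving sequence of collapses). Once Proposition \ref{ball} is available, the theorem itself is a short bookkeeping step: each stratum is an open ball, each closure is a closed ball, and the boundary of each closure is a finite union of lower-dimensional strata, so $\{\mu^{-1}(M) : M \in \mathrm{MacP}(2,n)\}$ satisfies the definition of a regular cell decomposition of $\mathrm{Gr}(2,\mathbb{R}^n)$.
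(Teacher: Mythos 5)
Your proposal takes essentially the same route as the paper, which packages the inductive step into Theorem~\ref{cball} and then deduces Theorem~\ref{main} from it in one line. One point you leave implicit that the paper makes explicit: to pass from ``$\mathrm{MacP}(2,n)_{\leq M}\cup\{\hat 0\}$ is the augmented face poset of a regular cell decomposition of a sphere'' (the combinatorial output of Section~\ref{sec:shell_1} together with Theorem~\ref{bjorner}) to the topological statement ``$\bigcup_{N<M}\mu^{-1}(N)$ is a sphere'' --- which is what Proposition~\ref{ball} actually needs --- you must already know that $\bigcup_{N<M}\mu^{-1}(N)$ is a regular cell complex whose face poset is $\mathrm{MacP}(2,n)_{<M}$, i.e.\ that each $\overline{\mu^{-1}(N)}$ with $N<M$ is a closed ball attached along lower strata. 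The paper supplies this by induction on $h(M)$ in the proof of Theorem~\ref{cball}; your assembly invokes the pieces in the right order but should state this induction to close the argument. Also, a minor attribution slip: Theorem~\ref{top} by itself only shows $\overline{\mu^{-1}(M)}$ is a topological manifold with boundary $\bigcup_{N<M}\mu^{-1}(N)$ --- the fact that this boundary is a sphere is not an output of Section~\ref{sec:top_1} but of the shellability argument plus the induction just described.
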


Let $M$  be a rank $2$ oriented matroid. We will first determine the topology  of $\mu^{-1}(M) = \{X\in Gr(2, \mathbb{R}^n): (\pm \chi_X) = M\}$. We know from the background on oriented matroids in Section \ref{sec:back} that the topology of $\mu^{-1}(M)$ is invariant under relabeling and reorientation of elements of the oriented matroid. We may thus assume that $\{1, 2\}$ is a basis of $M$ for the rest of the proofs in this chapter. We may also assume that for any $X \in \mu^{-1}(M)$, $X$ can be uniquely expressed as $X = \mbox{Rowspace}(e_1 \; e_2 \; v_3 \; v_2 \; \cdots v_n)$ satisfying $0 \leq \mathrm{Arg}(v_k) < \pi$ for a non-zero vector $v_k$. For such a non-zero vector $v_k$, we let $\theta_k = \mathrm{Arg}(v_k)$ and $r_k = |v_k|$, so that $v_k = r_ke^{i\theta_k}$.

Arranging the arguments of the non-zero vectors in an increasing order as $0 < \theta_{l_1} < \theta_{l_2} < \cdots < \theta_{l_{n_1}} < \frac{\pi}{2} $, $\frac{\pi}{2} < \theta_{j_1} < \theta_{j_2} < \cdots < \theta_{j_{n_2}} < \pi $, we can thus uniquely represent $X$ as  $( (\theta_{l_1}, \theta_{l_2}, \ldots, \theta_{l_{n_1}}), (\theta_{j_1}, \theta_{j_2}, \ldots, \theta_{j_{n_2}}), (r_\lambda)_{\lambda} )$. An example of such an identification is given in Figure \ref{real}.
\begin{figure}[htb!]
\begin{tikzpicture}
	\path[->] (0,0) edge node[at end, above]{$3$} (2,0.67) ;
	\path[->] (0,0) edge node[at end, above]{$2$} (0,2) ;
	\path[->] (0,0) edge node[at end, above]{$1$} (1.5,0) ;
	\path[->] (0,0) edge node[at end, above]{$9$} (2.5,0) ;
	\path[->] (0,0) edge node[at end, above]{$4$} (2,1.33)  ;
	\path[->] (0,0) edge node[at end,  above]{$5$} (1,1.35) ;
	\draw[dashed] (0,0) edge (-2,0); 
	\path[->] (0,0) edge node[at end, above]{$6$}(-1, 1);
	\path[->] (0,0) edge node[at end, above]{$7$}(-1.5, 1.5);
	\path[->] (0,0) edge node[at end, above]{$8$}(-1.5, 0.5);
	\end{tikzpicture}
	\caption{$X \; \mbox{as a point}\; ((\theta_3, \theta_4, \theta_5), (\theta_6, \theta_8), (r_i)_{1\leq i \leq 9})$}
	\label{real}
\end{figure}
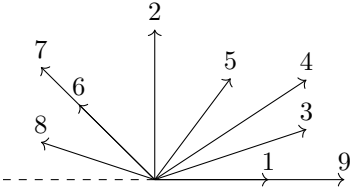
The above representation of an element $X \in \mu^{-1}(M)$ gives an identification

$(\textbf{ID1}) \; \; \mu^{-1}(M) \equiv A_M = \mathrm{Interior}(\{(\theta_{l_1}, \theta_{l_2}, \ldots, \theta_{l_{n_1}}) : 0 \leq \theta_{l_1} \leq \theta_{l_2} \cdots \leq \theta_{l_{n_1}} \leq\frac{\pi}{2}\} \times \{(\theta_{j_1}, \theta_{j_2}, \ldots, \theta_{j_{n_2}}) : \frac{\pi}{2} \leq \theta_{j_1} \leq \theta_{j_2} \leq \cdots \leq \theta_{j_{n_2}}\leq\pi \} \times [0, \infty)^{L_M-2})$

\begin{nota}
Let $M \in \mathrm{MacP}(2,n)$ and $i$ a non-loop in $M$. $l_M$ denotes the number of non-loops element in $M$. $p_M$ denotes the number of distinct parallel/anti-parallel classes of $M$. $P_M(i)$ denotes the set of elements that are parallel/anti-parallel to $i$.
\end{nota}

The following proposition then follows from the above identification of $\mu^{-1}(M).$

\begin{prop}\label{prop1}
	Let $M \in \mbox{MacP}(2,n)$. Then $\mu^{-1}(M)$ is homeomorphic to an  open ball of dimension $h(M) = l_M + p_M -4$. 
\end{prop}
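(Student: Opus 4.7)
The plan is to read the dimension directly from the identification (ID1) and use that an open simplex and an open cube are each homeomorphic to an open ball of their natural dimension.

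First, I would verify that $\mu^{-1}(M)$ really is the interior of the product described in (ID1), by checking that every boundary degeneration corresponds to a change of oriented matroid. If some $r_k \to 0$, then element $k$ becomes a loop (decreasing $l_M$); if two consecutive angles $\theta_{l_i} = \theta_{l_{i+1}}$ coincide, the corresponding elements become parallel or anti-parallel (decreasing $p_M$); and if an angle reaches one of the endpoints $0$, $\pi/2$, or $\pi$, the element becomes parallel/anti-parallel to element $1$ or element $2$ (again decreasing $p_M$). On the strict interior, none of these degenerations occur and the resulting oriented matroid is exactly $M$, so $\mu^{-1}(M)$ matches the interior of the product.

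Second, I would count dimensions. The strictly ordered tuples $0 < \theta_{l_1} < \cdots < \theta_{l_{n_1}} < \pi/2$ form an open $n_1$-simplex, and similarly the tuples $\pi/2 < \theta_{j_1} < \cdots < \theta_{j_{n_2}} < \pi$ form an open $n_2$-simplex; the free radii range in $(0, \infty)^{l_M - 2}$, an open cube. Each factor is homeomorphic to an open ball of its own dimension, and a finite product of open balls is an open ball, so $\mu^{-1}(M)$ is homeomorphic to an open ball of dimension $n_1 + n_2 + (l_M - 2)$.

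Third, I would identify $n_1 + n_2 = p_M - 2$. Since we have normalized so that $\{1,2\}$ is a basis with $\theta_1 = 0$ and $\theta_2 = \pi/2$, the singletons $\{1\}$ and $\{2\}$ are each their own parallel/anti-parallel class and have fixed angles, while each of the remaining $p_M - 2$ parallel/anti-parallel classes contributes a single free angle representative lying in $(0, \pi/2)$ or $(\pi/2, \pi)$. Thus $n_1 + n_2 = p_M - 2$ and the total dimension is $(p_M - 2) + (l_M - 2) = l_M + p_M - 4 = h(M)$.

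The only subtle point, and the step I would verify most carefully, is that parallel or anti-parallel elements in a realizable rank $2$ oriented matroid share the same unoriented angle (the representative lying in $[0, \pi)$), so that the number of independent angle coordinates equals the number of parallel/anti-parallel classes rather than the total number of non-loop elements; this is immediate from the definitions in Section \ref{sec:back} once one notes that two non-loop column vectors $v_i$, $v_j$ in $\mathbb{R}^2$ determine the same sign on every $2\times 2$ minor (up to a global sign) if and only if $v_j \in \mathbb{R} v_i$, i.e.\ $\theta_i = \theta_j$ under our normalization.
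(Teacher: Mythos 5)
Your proposal is correct and matches the paper's approach: the paper derives Proposition \ref{prop1} directly from the identification (ID1), and you are filling in exactly the dimension count that makes (ID1) yield an open ball of dimension $l_M + p_M - 4$. One small imprecision worth fixing: you write that ``the singletons $\{1\}$ and $\{2\}$ are each their own parallel/anti-parallel class,'' but in general $P_M(1)$ and $P_M(2)$ may contain other elements (those $k$ with $v_k$ a scalar multiple of $e_1$ or $e_2$, which under the normalization $0 \le \mathrm{Arg}(v_k) < \pi$ have $\theta_k = 0$ or $\theta_k = \pi/2$); the correct statement is simply that these two classes correspond to the fixed angles $0$ and $\pi/2$, and every other class contributes exactly one free angle, so $n_1 + n_2 = p_M - 2$ as you conclude.
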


\begin{prop}\label{cor1}
	 Let $(\pm\chi_N, N), (\pm\chi_{M}, M)$ be two rank $2$ oriented matroids. Suppose that $M$ covers $N$. Then exactly one of the following possibilities holds:
	\begin{enumerate}[]
		\item[] \textbf{CR1} There is exactly one $i$ such that $|P_M(i)|\geq 2$ in  $M$, $i$ is a loop in $N$ and $\chi_N(j,k) = \chi_M(j,k)$ for $j,k \neq i$.
		
		\item[] \textbf{CR2} There are exactly two distinct parallel/anti-parallel classes $P_M(i)$ and $P_M(j)$ in $M$ such that the following holds: $$\chi_N(a,b) = \left\{ \begin{array}{ccc}
		0 & \mbox{if} & a, b \in P_M(i) \cup P_M(j)\\
		\chi_M(a,b) & & \mbox{otherwise}
		\end{array}\right\}$$
	\end{enumerate}

\end{prop}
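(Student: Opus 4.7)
The plan is to use the realizability of every rank $2$ oriented matroid to reduce the statement to an analysis of vector arrangements in $\mathbb{R}^2$. I would fix a realization $v_1,\ldots,v_n\in\mathbb{R}^2$ of $M$, so that the non-loops of $M$ partition into classes $P_M(i_1),\ldots,P_M(i_{p_M})$ corresponding to distinct lines through the origin, linearly ordered by angle in $[0,\pi)$. A standard consequence of weak-map comparability in the rank $2$ case is that $\chi_N(a,b)\in\{0,\chi_M(a,b)\}$ for every pair $(a,b)$, i.e.\ no sign is allowed to flip in a single weak map; a realization of $N$ can be obtained from the realization of $M$ by a continuous deformation that only shrinks some vectors to zero and collapses adjacent lines, never letting one line cross another.

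Next, I would classify the elementary zero-out patterns. If $\chi_N(a,b)=0$ while $\chi_M(a,b)\ne 0$, realizability forces one of two things: either $v_a$ or $v_b$ has become zero (a loop in $N$), or the lines through $P_M(a)$ and $P_M(b)$ have merged, in which case every pair $(a',b')$ with $a'\in P_M(a),\ b'\in P_M(b)$ is zeroed simultaneously. Moreover, merging $P_M(a)$ and $P_M(b)$ by a weak map is only possible when the two classes are adjacent on $[0,\pi)$; otherwise the merging deformation sweeps some intervening class and forces a sign flip of $\chi_M(a,c)$, violating the observation of the previous paragraph. The two resulting elementary moves are therefore: loopify a single element $i$, or merge two adjacent classes.

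The final step is the minimality argument. Any simultaneous occurrence of two elementary moves between $N$ and $M$ can be factored through an intermediate oriented matroid by performing just one of the moves, so such a relation is not a cover. The delicate sub-case is the loopification of an element $i$ with $|P_M(i)|=1$: here one can intercept by first merging the singleton class into an adjacent class, producing an intermediate $M'$ with $N<M'<M$; this is precisely why CR1 demands $|P_M(i)|\ge 2$. Thus the only cover relations are a single loopification with a surviving class partner (CR1) or a single adjacent class merger (CR2). Mutual exclusivity is immediate because CR1 strictly enlarges the loop set while CR2 strictly decreases $p_M$. The main obstacle I expect is justifying the adjacency condition in the class-merger case, which I would pin down by tracking the sign of $\chi_M(a,c)$ for any element $c$ lying in a third class that would be swept during a non-adjacent merger.
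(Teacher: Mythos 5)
Your proposal follows essentially the same route as the paper: both arguments reduce to realizability of rank~$2$ oriented matroids, observe that a weak map between two rank~$2$ oriented matroids cannot flip a nonzero sign of the chirotope, classify the elementary degenerations as (i) loopification of an element with a surviving class partner and (ii) merger of two adjacent parallel/anti-parallel classes, and then factor any longer degeneration through an intermediate oriented matroid to rule it out as a cover. Your explicit mutual-exclusivity argument via the invariants $(l_M,p_M)$, and your handling of the singleton-class loopification by first merging the singleton into an adjacent class, are exactly the subtleties the paper's proof addresses (with the paper tracing the factoring through the intermediate matroids $N'$ and $N''$). One small remark: the paper's proof also verifies the converse, that a pair satisfying \textbf{CR1} or \textbf{CR2} really is a cover (this is used downstream in Lemma~\ref{le} and Lemma~\ref{thin}); your outline does not make that direction explicit, though it is not required by the proposition as stated and can be recovered from the $(l_M,p_M)$ bookkeeping you introduce.
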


\begin{proof}
We first check that if $M$ and $N$ satisfy \textbf{CR1} or \textbf{CR2} then $M$ covers $N$. Suppose $M$ and $N$ satisfy \textbf{CR1}; that is $\chi_N(a,b) = \chi_M(a,b)$ for all $a, b \neq i$ and $i$ is a loop of $N$. Let $N'$ be a rank $2$ oriented matroid such that $N < N' \leq M$. Then $i$ is not a loop in $N'$ as otherwise $N' \leq N$. We thus have that $\chi_{N'}(i, k) \neq 0$ if and only if $\chi_M(i,k) \neq 0$. Hence $N' = M$. 

Also, suppose that $M$ and $N$ satisfy \textbf{CR2}. Then there are adjacent parallel classes $P_M(i)$, $P_M(j)$ with $\chi_N$ satsfying the definition in \textbf{CR2}. Suppose there is an $N'$ satsfying $N < N' \leq M$. Then $\chi_{N'}(i,j) \neq 0$ as otherwise $N' \leq N$. So, $\chi_{N'}(a,b) = \chi_M(a,b)$ for all $a \in P_M(i)$ and $b \in P_M(j)$. Hence $N' = M.$

Let $N_0 < M$, and let $i,j$ be non-loops in $N_0$. $\chi_M(i, j) > 0 $ implies that $\chi_{N_0}(i,j) \geq 0$. That is, for any subspace realization $\mathrm{Rowspace}(e_1\; e_2 \; v_3 \; v_4 \cdots \; v_n ) \in \mu^{-1}(M)$, $\mathrm{Arg}(v_i) < \mathrm{Arg}(v_j)$ implies that $\mathrm{Arg}(w_i) \leq \mathrm{Arg}(w_j)$ for any vector realization $\mathrm{Rowspace}(e_1\; e_2 \; w_3\; w_4 \cdots \; w_n) \in \mu^{-1}(N_0)$. Also $\chi_M(i,j) = 0$ implies that $\chi_{N_0}(i, j) = 0$. In otherwords, $\mathrm{Arg}(v_i) = \mathrm{Arg}(v_j)$ implies that $\mathrm{Arg}(w_i) = \mathrm{Arg}(w_j)$ if $i, j$ are non-loops in $N_0$. 
\begin{figure}[!htb]
\begin{subfigure}[t]{0.3\textwidth}
\begin{tikzpicture}
\path[->] (0,0) edge node[at end, below, right= 1mm]{$3$} (1.7,0.5) ;
\path[->] (0,0) edge node[at end, above]{$2$} (0,1.5) ;
\path[->] (0,0) edge node[at end, below]{$1$} (1.5,0) ;
\path[->] (0,0) edge node[at end, above]{$4$} (1.33,0.89)  ;
\path[->] (0,0) edge node[at end,  above]{$5$} (1,1.35) ;
\path[->, dashed] (0,0) edge node[at end, above]{$l_j$} (0.5, 1.5) ;
\path[->, dashed] (0,0) edge node[at end, right= 0.1mm]{$l_i$} (1.5, 0.7) ;
\end{tikzpicture}
\end{subfigure}
~
\begin{subfigure}[t]{0.3\textwidth}
\begin{tikzpicture}
\path[->] (0,0) edge node[at end, above]{$3$} (1.7,0.67) ;
\path[->] (0,0) edge node[at end, above]{$2$} (0,1.5) ;
\path[->] (0,0) edge node[at end, above]{$1$} (1.5,0) ;
\path[->] (0,0) edge node[at end, above]{$4$} (1.33,0.89)  ;
\path[->] (0,0) edge node[at end,  above]{$5$} (1,1.35) ;
\path[->, dashed] (0,0) edge node[at end, above]{$l_j$} (1.3, 1.755) ;
\path[->, dashed] (0,0) edge node[at end, right= 0.1mm]{$l_i$} (2.04, 0.804) ;
\end{tikzpicture}
\caption{$N'$}
\end{subfigure}
~
\begin{subfigure}[t]{0.3\textwidth}
\begin{tikzpicture}
\path[->] (0,0) edge node[at end, above]{$3$} (1.7,0.67) ;
\path[->] (0,0) edge node[at end, above]{$2$} (0,1.5) ;
\path[->] (0,0) edge node[at end, above]{$1$} (1.5,0) ;
\path[->] (0,0) edge node[at end, below= 0.5mm]{$4$} (0.77, 1.04)  ;
\path[->] (0,0) edge node[at end,  above]{$5$} (1,1.35) ;
\path[->, dashed] (0,0) edge node[at end, above]{$l_j$} (1.3, 1.755) ;
\path[->, dashed] (0,0) edge node[at end, right= 0.1mm]{$l_i$} (2.04, 0.804) ;
\end{tikzpicture}
\caption{$N_0$}
\end{subfigure}
~
\begin{subfigure}[t]{0.3\textwidth}
\begin{tikzpicture}
\path[->] (0,0) edge node[at end, above]{$3$} (1.7,0.67) ;
	\path[->] (0,0) edge node[at end, above]{$2$} (0,1.5) ;
	\path[->] (0,0) edge node[at end, above]{$1$} (1.5,0) ;
	\path[->] (0,0) edge node[at end, below= 0.5mm]{$4$} (0.77, 1.04)  ;
	\path[->] (0,0) edge node[at end,  above]{$5$} (1,1.35) ;
\end{tikzpicture}
\end{subfigure}
\caption{}
\label{}
\end{figure}

Suppose $l_i$ is a non-loop in $M$ such that all elements in $P_M(l_i)$ are loops in $N_0$. Let $a$ be a non-loop in $N_0$ satisfying the condition that if for any element $p$ such that $\chi_M(p, a) = +$, then $\chi_M(p, l_i) = +$. Similarly, let $b$ be a non-loop in $N$ satisfying the condition that if for any element $p$ such that $\chi_M(b, p) = +$, then $\chi_M(l_i,p) = +$. The face of $\{(\theta_1, \theta_2, \ldots, \theta_n): 0 \leq \theta_1 \leq \theta_2 \leq \cdots \leq \theta_n \leq \frac{\pi}{2}\}$ corresponding to $\theta_a = \theta_{l_1} = \theta_{l_2} = \cdot \theta_{l_i} < \cdots < \theta_b$ determines a rank $2$ oriented matroid $N'$ such that $N_0 < N' < M$. Repeating the same argument for all such $l_i$ with all elements in $P_M(l_i)$ loops in $N$, we can assume that $N'$ is a rank $2$ oriented matroid  such that $N' < M$ is obtained from $M$ by sequences of \textbf{CR2} and each parallel class $P_{N'}(i)$ contains a non-loop of $N$. If $a, b$ are non-loops in $N$ such that $\chi_{N}(a, b) = 0$ and $\chi_N'(a, b) = +$, this corresponds to the face of $\{(\theta_1, \theta_2, \ldots, \theta_{n_1}): 0 \leq \theta_1 \leq \theta_2 \leq \cdots \leq \theta_n \leq \frac{\pi}{2}\}$ determined by $\theta_a = \cdots = \theta_b$. From $N'$ we can obtain by sequences of \textbf{CR2} a rank $2$ oriented $N''$ having the same number of distinct parallel class as $N$ and  the same number of non-loops as $M$. We can then obtain $N$ from $N''$ by sequence(s) of $\textbf{CR1}$. In particular, if $M$ covers $N_0$, then the proposition follows.

\end{proof}

\begin{cor}
	$\mbox{MacP}(2,n)$ is a ranked poset, with rank function $h(M) = \dim(\mu^{-1}(M)) = L_M + P_M - 4$.
\end{cor}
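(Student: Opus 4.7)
My plan is to deduce the corollary directly from Proposition \ref{cor1} (the cover-classification just proven) together with Proposition \ref{prop1} (the dimension count). Concretely, to verify that $\mathrm{MacP}(2,n)$ is ranked with rank function $h(M) = l_M + p_M - 4$, it suffices to check that along every cover relation $N \lessdot M$ the quantity $l_M + p_M$ drops by exactly $1$; combined with Proposition \ref{prop1} this will simultaneously give the dimension identity $h(M) = \dim \mu^{-1}(M)$.

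By Proposition \ref{cor1} every cover $N \lessdot M$ falls into one of the two mutually exclusive cases \textbf{CR1} or \textbf{CR2}, so I only need to analyze these two cases. In case \textbf{CR1}, there is exactly one element $i$ with $|P_M(i)| \ge 2$ that becomes a loop in $N$, and the chirotope is otherwise unchanged; hence the parallel class $P_M(i)$ loses the element $i$ but remains nonempty, so $l_N = l_M - 1$ and $p_N = p_M$, giving $h(N) = h(M) - 1$. In case \textbf{CR2}, two distinct parallel/anti-parallel classes $P_M(i)$ and $P_M(j)$ are merged into a single class in $N$, with no element becoming a loop; hence $l_N = l_M$ and $p_N = p_M - 1$, again giving $h(N) = h(M) - 1$.

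Since every cover strictly decreases $h$ by one and $h$ takes values in $\mathbb{Z}_{\ge 0}$ (the dimension count of Proposition \ref{prop1} ensures $h(M) \ge 0$ whenever $\mu^{-1}(M) \ne \emptyset$), the function $h$ is a well-defined rank function on $\mathrm{MacP}(2,n)$, so the poset is ranked. The second equality $h(M) = \dim \mu^{-1}(M)$ is precisely the content of Proposition \ref{prop1}, completing the proof.

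The main (minor) obstacle is purely bookkeeping: one must be careful in \textbf{CR1} that $|P_M(i)| \ge 2$ implies $P_N(i)$ is still nonempty after removing $i$, so the number of parallel classes is preserved (and not decreased), and in \textbf{CR2} that merging two distinct classes into one decreases the count by exactly one while preserving the total number of non-loops. Once these two arithmetic facts are recorded, the result is immediate.
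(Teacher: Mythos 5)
Your proof is correct and takes exactly the route the paper implicitly intends: the paper states this as a corollary with no explicit proof, and the natural justification is precisely your observation that in \textbf{CR1} the number of non-loops drops by one with the number of parallel/anti-parallel classes unchanged (since $|P_M(i)|\geq 2$ guarantees the class survives), while in \textbf{CR2} the number of classes drops by one with non-loops unchanged, so $l_M + p_M - 4$ decreases by exactly one along every cover and agrees with $\dim\mu^{-1}(M)$ by Proposition~\ref{prop1}. No gap.
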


\begin{lemma}\label{le}
	If $N < M$ are rank 2 oriented matroids such that $M$ covers $N$. Then $\mu^{-1}(N) \subseteq \partial\overline{(\mu^{-1}(M))}$. 
\end{lemma}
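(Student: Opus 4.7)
The plan is to invoke Proposition \ref{cor1}: since $M$ covers $N$, we are in one of the two cases \textbf{CR1} or \textbf{CR2}. In each case I will take an arbitrary point $X_0\in\mu^{-1}(N)$ and exhibit an explicit one-parameter family $X_\epsilon\in\mu^{-1}(M)$ with $X_\epsilon\to X_0$ as $\epsilon\to 0$. Since distinct fibers of $\mu$ are disjoint, this immediately places $X_0$ in $\overline{\mu^{-1}(M)}\setminus\mu^{-1}(M)=\partial\overline{\mu^{-1}(M)}$, giving the desired inclusion. I will write $X_0=\mathrm{Rowspace}(w_1\;w_2\;\cdots\;w_n)$ with $w_k\in\mathbb{R}^2$, so that $\mu(X_0)=N$.

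In case \textbf{CR1}, a unique element $i$ with $|P_M(i)|\geq 2$ becomes a loop in $N$, while every other chirotope value of $N$ agrees with that of $M$; in particular $w_i=0$. Pick any $j\in P_M(i)\setminus\{i\}$, so $w_j\neq 0$ (only $i$ went to zero). Define $X_\epsilon$ to be the rowspace obtained from $X_0$ by replacing $w_i$ with $\epsilon w_j$ if $i,j$ are parallel in $M$, or with $-\epsilon w_j$ if they are anti-parallel. For every $k\neq i$, one computes $\chi_{X_\epsilon}(i,k)=\pm\mathrm{sign}(\epsilon)\cdot\mathrm{sign}(\det(w_j,w_k))=\pm\chi_N(j,k)=\pm\chi_M(j,k)=\chi_M(i,k)$, where the sign choice is exactly the parallel/anti-parallel one made above; all other entries are inherited unchanged from $N$ and already agree with $M$. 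Hence $\mu(X_\epsilon)=M$ for all sufficiently small $\epsilon>0$, and $X_\epsilon\to X_0$ because the defining matrices converge.

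In case \textbf{CR2}, two classes $P_M(i)$ and $P_M(j)$ coalesce in $N$: the chirotope vanishes on $P_M(i)\cup P_M(j)$ in $N$, while every other entry is preserved. Consequently the columns $\{w_a:a\in P_M(i)\cup P_M(j)\}$ of $X_0$ all lie on a single line $\ell$ through the origin in $\mathbb{R}^2$. Let $R_\epsilon$ denote rotation of $\mathbb{R}^2$ by angle $\epsilon$, and define $X_\epsilon$ by replacing $w_b$ with $R_\epsilon w_b$ for every $b\in P_M(j)$ and keeping all other columns fixed. For pairs $(a,b)$ with at most one of them in $P_M(j)$ and $\chi_N(a,b)\neq 0$, the chirotope value perturbs continuously from that of $X_0$ and therefore remains equal to $\chi_M(a,b)$ for small $\epsilon$. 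The only delicate entries are those with $a\in P_M(j)$, $b\in P_M(i)$: writing $w_a=\alpha_a u$, $w_b=\beta_b u$ for a unit vector $u$ along $\ell$, one computes $\det(R_\epsilon w_a,w_b)=-\alpha_a\beta_b\sin\epsilon$, whose sign is controlled by $\mathrm{sign}(\epsilon)$. One then fixes the sign of $\epsilon$ so that the resulting sign equals $\chi_M(j,i)$ on a chosen representative pair.

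The main obstacle is showing that this single choice of $\mathrm{sign}(\epsilon)$ produces the correct value $\chi_M(a,b)$ for \emph{every} pair $a\in P_M(j)$, $b\in P_M(i)$ simultaneously. The key observation is that the parallel/anti-parallel pattern within $P_M(j)$ is determined by chirotope values paired with elements outside $P_M(i)\cup P_M(j)$; those values coincide for $M$ and $N$, so the signs $\alpha_a$ of the $w_a$ along $\ell$ in the realization $X_0$ of $N$ agree with the parallel/anti-parallel signs of $a$ relative to $j$ in $M$, and similarly for $\beta_b$ and $i$. Combined with the identity $\chi_M(a,b)=s_a^M s_b^M\chi_M(j,i)$, a direct expansion shows the uniform match, completing the construction of $X_\epsilon\in\mu^{-1}(M)$ with $X_\epsilon\to X_0$.
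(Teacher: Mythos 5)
Your proof is correct and takes essentially the same approach as the paper: an arbitrary point of $\mu^{-1}(N)$ is exhibited as a limit of points in $\mu^{-1}(M)$ by perturbing a vector realization of $N$, split into cases \textbf{CR1} and \textbf{CR2} via Proposition~\ref{cor1}. Your treatment of \textbf{CR2} is actually more careful than the paper's, which rotates only the single vector $v_j$ and so does not literally produce a realization of $M$ when $|P_M(j)|>1$; you instead rotate the whole class $P_M(j)$ and check the resulting chirotope signs pair by pair, filling in the sign bookkeeping the paper leaves implicit.
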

\begin{proof}
	
\textbf{Case 1}: Suppose the covering relation $N < M$ is CR1. There is exactly one $p$ in a parallel/anti-parallel class $P$ of $M$ such that $|P|\geq 2$ in  $M$, $p$ is a loop in $N$ and $\chi_N(j,k) = \chi_M(j,k)$ for $j,k \neq i$.
	
Let $X_1 = \mbox{Rowspace}(v_1 \; v_2 \; v_3 \; \cdots v_k) \in \mu^{-1}(N)$. If $p$ is parallel to $j$ in $M$ and $p$ is a loop in $N$ that is $v_p = 0$, then $\mbox{Rowspace}(v_1 \; v_2 \; \cdots v_{p-1} \; \epsilon \cdot v_j \; v_{p+1} \ldots v_n) \in \mu^{-1}(M)$ for every $\epsilon > 0$. Hence $\mu^{-1}(N) \subseteq  \partial \overline{\mu^{-1}(M)}$
	
\textbf{Case 2}: Suppose the covering relation $N < M$ is CR2. There are two distinct parallel/anti-parallel classes $P_M(s)$ and $P_M(j)$ in $M$ such that the following holds: $$\chi_N(k,l) = \left\{ \begin{array}{ccc}
	0 & \mbox{if} & k,l \in P_M(s) \cup P_M(j)\\
	\chi_M(k,l) & & \mbox{otherwise}
	\end{array}\right\}$$\\
	Let $X_1 = \mbox{Rowspace}(e_1 \; e_2 \; \cdots \; v_n)$ be any vector arrangement in $\mu^{-1}(N)$. Let $v_s = r_se^{i\theta_s}$ , $v_j = r_je^{i\theta_s}$,  and suppose WLOG that $s$ and $j$ are parallel in $N$. Let $v_j^\epsilon = r_se^{i(\theta_s + \epsilon)}$. If $\chi_{M}(s,j) = +$, ($v_j^{-\epsilon}$ if $\chi_{M}(p,j) = -$), then $\mbox{Rowspace}(e_1 \; e_2 \; v_3\; \cdots\\ v_{j-1}, v_j^\epsilon, v_{j+1}, \; \cdots \; v_n) \in \mu^{-1}(M)$ for every sufficiently small value of $\epsilon$. Hence, $\mu^{-1}(N) \subseteq \partial \overline{\mu^{-1}(M)} .$	 
\end{proof}

	\begin{prop}\label{prop2}
		Let $M \in \mbox{MacP}(2,n)$. Then $\partial \overline{\mu^{-1}(M)} = \bigcup_{N < M} \mu^{-1}(N)$
	\end{prop}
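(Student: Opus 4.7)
The plan is to prove the two inclusions separately. The inclusion $\bigcup_{N<M}\mu^{-1}(N)\subseteq\partial\overline{\mu^{-1}(M)}$ will be obtained by iterating Lemma \ref{le} along a saturated chain between $N$ and $M$, while the reverse inclusion $\partial\overline{\mu^{-1}(M)}\subseteq\bigcup_{N<M}\mu^{-1}(N)$ will come from the upper semicontinuity of the sign function under limits of subspaces in the Grassmannian.

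For the $\supseteq$ direction I will fix $N<M$ and pick a saturated chain $N=N_0<N_1<\cdots<N_t=M$ with each $N_i$ covering $N_{i-1}$; such a chain exists because Proposition \ref{cor1} makes $\mathrm{MacP}(2,n)$ a ranked poset. Lemma \ref{le} then provides $\mu^{-1}(N_{i-1})\subseteq\overline{\mu^{-1}(N_i)}$, and passing to closures upgrades this to $\overline{\mu^{-1}(N_{i-1})}\subseteq\overline{\mu^{-1}(N_i)}$. Telescoping yields $\mu^{-1}(N)\subseteq\overline{\mu^{-1}(M)}$, and since $\mu^{-1}(N)\cap\mu^{-1}(M)=\emptyset$ this places $\mu^{-1}(N)$ inside $\overline{\mu^{-1}(M)}\setminus\mu^{-1}(M)=\partial\overline{\mu^{-1}(M)}$.

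For the $\subseteq$ direction I will take $X\in\partial\overline{\mu^{-1}(M)}$, set $N:=\mu(X)\neq M$, and choose a sequence $X_k\in\mu^{-1}(M)$ with $X_k\to X$ in $\mathrm{Gr}(2,\mathbb{R}^n)$. For any $y\in X$, convergence of subspaces in the Grassmannian produces $y_k\in X_k$ with $y_k\to y$ in $\mathbb{R}^n$ (for instance by orthogonal projection onto $X_k$). At every coordinate $i$ with $y(i)\neq 0$, continuity of coordinate evaluation forces $\mathrm{sign}(y_k(i))=\mathrm{sign}(y(i))$ for all $k$ sufficiently large, while coordinates with $y(i)=0$ impose no constraint. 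Hence for $k$ large enough the covector $\mathrm{sign}(y_k)\in \V^*(M)$ dominates $\mathrm{sign}(y)$ coordinatewise in $\{0,+,-\}^n$ under the order $0<+$, $0<-$. Since every covector of $N$ arises as $\mathrm{sign}(y)$ for some $y\in X$, this shows $N\leq M$, and combined with $N\neq M$ I obtain $N<M$ and $X\in\bigcup_{N'<M}\mu^{-1}(N')$.

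The main obstacle, and the place where the rank-$2$ hypothesis actually enters, is tucked inside the $\supseteq$ direction: the sign-semicontinuity argument above works in any rank, but iterating Lemma \ref{le} rests on the covering classification of Proposition \ref{cor1}, which in turn uses realizability of rank-$2$ oriented matroids to construct an explicit perturbation of a vector realization of $N$ into one of $M$. Once Lemma \ref{le} is in hand I expect the proof of Proposition \ref{prop2} itself to take only a few lines.
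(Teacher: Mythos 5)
Your proof is correct and follows essentially the same two-step strategy as the paper: the $\supseteq$ direction iterates Lemma~\ref{le} along a saturated chain exactly as in the paper, and the $\subseteq$ direction is the same semicontinuity-of-signs argument, phrased via covectors (signs of vectors $y\in X$) rather than via the chirotope (signs of $2\times 2$ minors $\det(v_i\,v_j)$) as the paper does; these are cryptomorphic and give the same conclusion $N\leq M$. If anything, your passage to closures in the telescoping step is slightly more careful than the paper's terse ``we conclude,'' but the substance is identical.
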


	\begin{proof}
Let $\chi$ be a chirotope of $M$. Then $\overline{\mu^{-1}(M)} \subseteq \{\mbox{Rowspace}(v_1 \; v_2\; v_3\; \cdots \; v_n) : \mbox{sign}(\det(v_i \; v_j)) \in  \{0, \chi(i,j)\}\}$. Let $X \in \overline{\mu^{-1}(M)}$ and $N$ the rank $2$ oriented matroid determined by $X$. Then $N \leq M$.
	
Conversely,  suppose $N , M \in \mbox{MacP}(2,n)$ such that $N < M$. We will show that $\mu^{-1}(N) \subseteq  \overline{\mu^{-1}(M)}$.
	 
	Let  $N = N_1 < N_2 < \cdots < N_k = M$ be a maximal chain from $N$ to $M$. That $\mu^{-1}(N_i) \subseteq \partial \overline{\mu^{-1}(N_{i+1})}$ follows from Lemma ~\ref{le}.
	
From $\mu^{-1}(N_i) \subset \partial\overline{(\mu^{-1}(N_{i+1}))}$ for each $i$, we conclude that $\mu^{-1}(N) \subseteq \partial\overline{(\mu^{-1}(M))}$. 
\end{proof}	
A useful observation from Proposition \ref{cor1} and Proposition \ref{prop2} is the following:

Suppose $N < M$ are rank $2$ oriented matroids such that $P_M(l_1)$ and $P_M(l_2)$ are distinct parallel/anti-parallel classes in $M$, but $P_M(l_1) \cup P_M(l_2)$ is a parallel/anti-parallel class in $N$. Then we can obtain a rank $2$ oriented matroid $N' \leq M$ such that $N'$ covers $N$, $P_M(l_1)$ and $P_M(l_2)$ are in distinct parallel/anti-parallel class in $N'$. 

An example of such a construction is the following:

Suppose $\mbox{Rowspace}(v_1\; v_2 \; \ldots \; v_n)$ determines a vector realization for $N$ and assume WLOG that $P_N(l_1) = P_M(l_1) \cup P_M(l_2) $. We obtain a rank $2$ oriented matroid $N'$ as follows: let $v_{l_2} = r_{l_2}e^{i\theta_{l_2}}$ and,  we denote by  $\overline{P_M(l_1)}$ the subset of $P_M(l_1) $ that are anti-parallel to $l_2$ in $N$:
	$$w_s = \left\{\begin{array}{ccc}
	v_s & \mbox{if} & s \notin   P_M(l_1)\\
	r_{l_2}e^{i(\theta_{l_2} + \epsilon)} & \mbox{if} & s \in P_M(l_1)\setminus \overline{P_M(l_1)} , \chi_M(l_2, s) = +\\
	r_{l_2}e^{i(\theta_{l_2} - \epsilon)} & \mbox{if} & s \in P_M(l_1) \setminus \overline{P_M(l_1)}, \chi_M(l_2, s) = - \\
	r_{l_2}e^{i(\theta_{l_2} + \pi - \epsilon)} & \mbox{if} & s \in \overline{P_M(l_1)},\chi_M(l_2, s) = +\\
	r_{l_2}e^{i(\theta_{l_2} + \pi + \epsilon)} & \mbox{if} & s \in \overline{P_M(l_1)},\chi_M(l_2, s) = -\\
	\end{array}\right\}.$$
	The rank $2$ subspace $\mbox{Rowspace}(w_1\; w_2\; \ldots\; w_n)$ determines a rank $2$ oriented matroid $N' \leq M$ that covers $N$.

\section{Shellability of the interval $\mbox{MacP}(2,n)_{\leq M} \cup \{\hat{0}\}$} \label{sec:shell_1}
We will show that the poset $\mathrm{MacP}(2,n)_{\leq M} \cup \{\hat{0}\}$ is the augmented face poset of a regular cell decomposition of a $h(M)-1$ dimensional sphere.

\begin{lemma}\label{lem}
	Let $W < T$ be rank 2 oriented matroids. Then the interval $[W, T]$ is totally semimodular. 
\end{lemma}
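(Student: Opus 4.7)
The plan is to verify the cover condition directly: for every subinterval $[A,B] \subseteq [W,T]$, every $x \in [A,B]$, and every pair of distinct covers $x \lessdot u$, $x \lessdot v$ with $u,v \leq B$, I will produce $z \in [A,B]$ with $u,v \lessdot z$. Since $A \leq x \leq z$ is automatic, the task reduces to constructing $z \geq u,v$ that covers both and satisfies $z \leq B$.

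Two ingredients drive the argument. First, Proposition \ref{cor1} classifies covering relations in $\mathrm{MacP}(2,n)$ into type CR1 (un-looping an element into an existing parallel/anti-parallel class) and type CR2 (splitting one parallel/anti-parallel class into two). Second, realizability in rank 2 together with the inclusion $\overline{\mu^{-1}(M)} \subseteq \{\mathrm{Rowspace}(v_1 \cdots v_n) : \mathrm{sign}(\det(v_i, v_j)) \in \{0, \chi_M(i,j)\}\}$ established in the proof of Proposition \ref{prop2} yields the clean characterization $N \leq M$ iff $\chi_N(a,b) \in \{0, \chi_M(a,b)\}$ for every pair $(a,b)$, so that $z \leq B$ becomes a pointwise chirotope inspection.

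I split into three cases according to the types of $x \lessdot u$ and $x \lessdot v$. If both are CR1, say $u$ un-loops $i$ into class $P$ and $v$ un-loops $j$ into class $Q$ (with $i \neq j$, and $P = Q$ allowed), take $z$ to be the oriented matroid obtained from $x$ by un-looping both $i$ and $j$ (into $P$ and $Q$ respectively); $z$ covers $u, v$ by a single CR1 step each, and $z \leq B$ is immediate from comparing chirotopes. If one cover is CR1 (un-loop $i$ into class $P_x$ of $x$) and the other is CR2 (split class $A$ of $x$ into $A_1, A_2$), then when $P_x \neq A$ the operations commute and $z$ performs both; when $P_x = A$, I attach $i$ to the piece $A_k$ whose direction in $B$ is consistent with $\chi_B(i,\cdot)$, so that $z$ covers $u$ by the CR2 split of $A \cup \{i\}$ into $A_k \cup \{i\}$ and $A_{3-k}$, and covers $v$ by the CR1 un-looping of $i$ into $A_k$.

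The main case, and the one I expect to require the most care, is two CR2 covers. If they split distinct classes of $x$, $z$ performs both splits independently. If they split the same class $A$ into $\{A_1, A_2\}$ and $\{B_1, B_2\}$, the upper-bound hypothesis becomes essential: after fixing labels so that $\chi_u(a_1,a_2) = +$ for $a_1 \in A_1, a_2 \in A_2$ and $\chi_v(b_1,b_2) = +$ for $b_1 \in B_1, b_2 \in B_2$, any $c \in A_1 \cap B_2$ paired with $d \in A_2 \cap B_1$ would force $\chi_B(c,d) = +$ from $u \leq B$ and $\chi_B(c,d) = -$ from $v \leq B$, a contradiction. Hence one of $A_1 \cap B_2$, $A_2 \cap B_1$ is empty, so the two partitions are comparable; say $A_1 \subseteq B_1$. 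The common refinement $\{A_1,\, A_2 \cap B_1,\, A_2 \cap B_2\}$ defines $z$, which covers $u$ by splitting $A_2$ and covers $v$ by splitting $B_1$, while $z \leq B$ follows pointwise in the chirotope. This compatibility argument, driven by the common upper bound $B$, is the crux of the proof; without it, genuinely incompatible 2-partitions such as $\{\{a,b\},\{c,d\}\}$ versus $\{\{a,c\},\{b,d\}\}$ would block semimodularity.
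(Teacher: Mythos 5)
Your approach mirrors the paper's: classify the two covers $x \lessdot u$ and $x \lessdot v$ by CR1/CR2 type using Proposition~\ref{cor1}, and exhibit a common cover $z$ in each of the three resulting cases. You are more careful than the paper about verifying the upper-bound condition $z \leq B$ (the paper's proof of Lemma~\ref{lem} constructs an $M$ covering $N_1, N_2$ but never checks $M \leq T$), and your compatibility argument in the CR2/CR2 same-class case, forcing one of $A_1 \cap B_2$, $A_2 \cap B_1$ to be empty, is the right way to make rigorous what the paper merely asserts in its Case~3.

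However, the key assertion ``$z \leq B$ is immediate from comparing chirotopes'' in the CR1/CR1 case is false, and the same gap is present in the paper's Case~1. When $z$ un-loops $i$ into $P$ and $j$ into $Q$ with $P \neq Q$, the value $\chi_z(i,j)$ is forced to equal $\chi_x(p,q)$ for $p \in P$, $q \in Q$; but neither $u$ nor $v$ constrains $\chi_B(i,j)$, since $j$ is a loop of $u$ (so $\chi_u(i,j)=0$) and $i$ is a loop of $v$ (so $\chi_v(i,j)=0$). Concretely, take $n=5$ and let $x$, $u$, $v$, $B$ be realized by the row spaces of $(e_1,\, e_2,\, (1,1),\, 0,\, 0)$, $(e_1,\, e_2,\, (1,1),\, e_1,\, 0)$, $(e_1,\, e_2,\, (1,1),\, 0,\, (1,1))$, and $(e_1,\, e_2,\, (1,1),\, v_4,\, v_5)$ respectively, where $v_4, v_5$ have $0 < \mathrm{Arg}(v_5) < \mathrm{Arg}(v_4) < \pi/4$. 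One checks directly from chirotopes that $u, v \leq B$ and that $u, v$ cover $x$. Any $z$ with $u,v \leq z$ and $h(z) = h(x)+2$ must have $l_z = 5$, $p_z = 3$, which together with $\chi_z(2,4) = \chi_z(3,4) = -$ and $\chi_z(1,5) = +$, $\chi_z(2,5) = -$ forces $4 \parallel 1$ and $5 \parallel 3$ in $z$; hence $\chi_z(4,5) = \chi_x(1,3) = +$, while $\chi_B(4,5) = -$. So $z \not\leq B$, the interval $[x,B]$ is not semimodular, and the lemma fails as stated. This is precisely the point where the upper bound $B$ needs to enter the CR1/CR1 case but does not in either your construction or the paper's.
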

\begin{proof}
	Let $N, N_1, N_2 \in [W, T]$ such that $N_2$ and $N_1$ cover $N$. In Proposition ~\ref{cor1}, we proved that there are two possible scenarios when $N_1$ covers $N$ and similarly for $N_2$ covering $N$. So there are the following three distinct cases:
	
	\textbf{Case 1:} If $N < N_1$ and $N < N_2$ are both case CR1 of Proposition ~\ref{cor1}. That is, there are $i\neq j$ such that $i,j$ are loops in $N$ but $i$ is a non-loop in $N_1$ with parallel/anti-parallel class $P_1(i)$ in $N_1$ such that $|P_1(i)| \geq 2$ in $N_1$. Also, that $j$ is a non-loop  with parallel/anti-parallel class $P_2(j)$ in $N_2$ so that $|P_2(j)| \geq 2$ and $j$ is a loop in $N$ . Let $k_1 \in P_1(i) \setminus \{i,j\}$ and $k_2 \in P_2(j) \setminus \{i,j\}$. Then $k_1, k_2$ are non-loops in $N$. If $\mbox{Rowspace}(v_1 \; v_2 \; \cdots \; v_n)$ determines a vector realization of $N$, then $\mbox{Rowspace}(v_1\; v_2 \; \cdots v_{i-1} \; \epsilon_1 \cdot v_{k_1} \; v_{i+1}\; \cdots v_{j-1} \; \epsilon_2\cdot v_{k_2}\; \cdots v_n)$ (where $\epsilon_i = \pm 1$) gives a rank $2$ oriented matroid $M$ that covers both $N_1$ and $N_2$.
	
	\textbf{Case 2:} If $N < N_1$ is case (a) of Proposition ~\ref{cor1} and $N < N_2$ is case $(b)$ of Proposition ~\ref{cor1}, that is, there is a non-loop $i$ with $|P_1(i)| \geq 2$ in $N_1$ and $i$ is a loop in $N$. Also, there are distinct parallel/ anti-parallel classes $P_2(j)$ and $P_2(k)$ in $N_2$ such that $P_2(j) \cup P_2(k)$ is a parallel/ anti-parallel class in $N$. Suppose WLOG that $t \in P_1(i)\setminus i$ is parallel to $i$ in $N_1$.
	
	We have that $i$ is a loop in $N_2$ since $N_2$ covers $N$. Now, if $\mbox{Rowspace}(v_1 \; v_2\;\ldots v_i \ldots\; v_n)$ determines a vector realization for $N_2$, then $\mbox{Rowspace}(v_1\; v_2 \; \ldots v_{i-1}\; v_t\; v_{i+1}\ldots \; v_n)$ determines a vector realization for $M$ that covers both $N_1$ and $N_2$.
	
	\textbf{Case 3:}If $N < N_1$ and $N < N_2$ are both case $(b)$ in Lemma ~\ref{cor1}. That is, there are distinct parallel/anti-parallel classes $P_1(l_1), P_1(l_2)$ in $N_1$ such that $P_1(l_1) \cup P_1(l_2)$ is a parallel/anti-parallel class in $N$. Similarly, there are classes $P_2(l_3), P_2(l_4)$ in $N_2$ such that $P_2(l_3) \cup P_2(l_4)$ is a parallel/anti-parallel class in $N$. 
	
	As in the observation after Proposition \ref{prop2}, we can obtain a rank $2$ subspace $\mbox{Rowspace}(w_1\; w_2\; \ldots\; w_n)$ that determines a rank $2$ oriented matroid $M$ that covers both $N_1$ and $N_2$.
\end{proof}
We have from the above lemma that for a rank $2$ oriented matroid $M$, every bounded interval in $\mbox{MacP}(2,n)_{\leq M} $ is totally semimodular.

Let $M$ be a rank $2$ oriented matroid, and let $N$ be a rank 2 oriented matroid in the interval $P = \mbox{MacP}(2,n)_{\leq M} \cup \{\hat{0}\}$. Then the interval $[N, M]$ is totally semimodular by  Lemma ~\ref{lem}. By Theorem ~\ref{bw}, any atom ordering in $[N, M]$ is a recursive atom ordering.

So to find a recursive atom ordering for $\mbox{MacP}(2,n)_{\leq M} \cup \{\hat{0}\}$, we only need to order the atoms of $\mbox{MacP}(2,n)_{\leq M} \cup \{\hat{0}\}$ as $X_1, X_2, \ldots , X_k$ so that the ordering satisfies conditions in Definition ~\ref{raon}.
\begin{prop}\label{rao}  
Let $M$ be a rank $2$ oriented matroid. Then the interval $\mbox{MacP}(2,n)_{\leq M} \cup \{\hat{0}\}$ has a recursive atom ordering.
	\end{prop}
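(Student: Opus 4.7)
My plan is to verify the two conditions of Definition~\ref{raon} for an explicit atom ordering. Condition (i) is essentially automatic: for every atom $X_j$, the upper interval $[X_j,M]$ is a sub-interval of $\mathrm{MacP}(2,n)_{\leq M}$ and is therefore totally semimodular by Lemma~\ref{lem}, so Theorem~\ref{bw} guarantees that \emph{any} atom ordering of $[X_j,M]$ is already a recursive atom ordering. In particular, I may always list the atoms of $[X_j,M]$ that cover some earlier $X_i$ first, and (i) is satisfied. The real problem is to order the atoms $X_1,\ldots,X_t$ of $\mathrm{MacP}(2,n)_{\leq M}\cup\{\hat{0}\}$ so that condition (ii) holds.

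Before choosing the ordering I would pin down the classification of atoms and of their pairwise common covers. Atoms have $h=0$, so each atom is a rank-$2$ oriented matroid $\leq M$ with $l=p=2$: two non-loops $\{a,b\}$ sitting in distinct parallel classes of $M$ (if $a\parallel b$ in $M$, weak-map monotonicity forces $a\parallel b$ in any $N\leq M$ with $a,b$ non-loops, so no such atom exists). A case analysis in the spirit of Proposition~\ref{cor1} then shows that two distinct atoms $\{a,b\}$ and $\{a',b'\}$ admit a common cover in $\mathrm{MacP}(2,n)_{\leq M}$ \emph{if and only if} they share exactly one element; assuming $a=a'$, the unique common cover is the rank-$2$ oriented matroid $z$ with non-loops $\{a,b,b'\}$ and parallel-class structure $\{\{a\},\{b,b'\}\}$, obtained by merging the $M$-classes of $b$ and $b'$ inside $z$. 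Crucially, this merging is legitimate whether or not $b,b'$ already lie in the same $M$-parallel class, because further merging of parallel classes is always a valid weak-map degeneration; consequently such a $z$ automatically satisfies $z\leq y$ for every rank-$2$ oriented matroid $y$ containing both $\{a,b\}$ and $\{a,b'\}$.

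With this in hand I take the lexicographic atom ordering: writing every atom as $\{a,b\}$ with $a<b$, let $X_i$ precede $X_j$ when $(a_i,b_i)$ precedes $(a_j,b_j)$ lexicographically. To verify (ii), fix $i<j$ and some $y$ with $X_i,X_j<y$; write $X_j=\{a_j,b_j\}$ with $a_j<b_j$, and let $m$ be the smallest non-loop of $y$. If $m=a_j$, then any lex-earlier $X_i\leq y$ has its smaller entry $\geq m=a_j$ while $(a_i,b_i)<(a_j,b_j)$, which forces $a_i=a_j$ and $b_i<b_j$, so $X_i=\{a_j,b_i\}$ shares $a_j$ with $X_j$ and we take $k=i$. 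If $m<a_j$ and $P_y(m)\neq P_y(a_j)$, then $\{m,a_j\}$ is an atom $\leq y$, lex-earlier than $X_j$, and shares $a_j$ with $X_j$. Finally, if $m<a_j$ and $P_y(m)=P_y(a_j)$, then $P_y(m)=P_y(a_j)\neq P_y(b_j)$, so $\{m,b_j\}$ is an atom $\leq y$, lex-earlier than $X_j$, and shares $b_j$ with $X_j$. In each of the three cases the common cover recorded in the previous paragraph lies below $y$ and supplies the pair $(k,z)$ demanded by (ii). The main obstacle I anticipate is the common-cover classification itself—in particular, the point that class-merging inside $z$ is always weak-map compatible with $y$ even when $b,b'$ lie in different parallel classes of $M$—but once that is in place the lex argument is short and purely combinatorial.
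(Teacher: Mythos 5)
Your proof is correct, and it takes a genuinely different route from the paper's while reaching the same conclusion. Both arguments dispatch condition~(i) of Definition~\ref{raon} the same way (total semimodularity of the upper intervals $[X_j,M]$ from Lemma~\ref{lem}, together with Theorem~\ref{bw}) and both order the atoms lexicographically as pairs. They diverge in how the pairs are labeled and in how condition~(ii) is verified. The paper fixes a vector realization of $M$ and an auxiliary affine line $L$, relabels $[n]$ by the left-to-right order in which the spanned lines meet $L$, and certifies~(ii) by choosing a maximal intermediate label between $j_1$ and $j_2$ (resp.\ $i_1$ and $i_2$) that is a non-loop of $Y$ and forming the merge $(i_1,jj_2)$ (resp.\ $(ii_2,j_2)$). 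Your argument works for \emph{any} labeling of $[n]$ and rests on an explicitly stated structural lemma: two atoms of $\mathrm{MacP}(2,n)_{\leq M}$ admit a common cover if and only if they share exactly one element, and when $\{a,b\}$ and $\{a,b'\}$ share $a$, the unique common cover (which merges $b,b'$ into one parallel class) is automatically $\leq y$ for every $y\leq M$ lying above both atoms. You then produce the witness $(k,z)$ by inspecting the smallest non-loop $m$ of $y$ and splitting into the cases $m=a_j$; $m<a_j$ with $P_y(m)\neq P_y(a_j)$; and $m<a_j$ with $P_y(m)=P_y(a_j)$. This buys independence from the realization and the affine-line gadget, the slightly stronger statement that every labeling yields a recursive atom ordering, and — worth flagging — a more robust verification of~(ii): the paper's intermediate-label choice in its Case~2 can produce an auxiliary atom $(i,j_2)$ that is \emph{not} below $Y$, namely when the $Y$-parallel class of $i$ is the one that wraps around the $L$-linearization of the circle of lines (for instance $M$ realized by $(1,0),(1,1),(0,1)$ and $Y$ merging the first and third elements into an anti-parallel pair), whereas your case split on whether $m\in P_y(a_j)$ handles exactly this situation and never invokes an atom that could fail to be $\leq y$.
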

	\begin{proof}
	We begin by ordering the atoms of $M$ as follows (see Figure ~\ref{rec} below): Fix a vector realization $ A = \{v_1, v_2, v_3, \ldots, v_n\}$ that determines $M$. The vector realization $A$  determines vector realizations for atoms of $M$. Let $L$ be an affine line not parallel to any of the lines spanned by the non-zero vectors in $\{v_i\}$. 
	
	We label the vectors in the realization $A$ in an increasing order from left to right in the order lines spanned by the vectors intersect the affine line $L$. We note that elements in the same parallel/anti-parallel class $P_M(i)$ have their linear spans intersect $L$ at the same point (in the Figure ~\ref{rec} below,\;  the pairs $1,2$ and $3,4$ labeled vectors are in a parallel/anti-parallel class respectively).
	 
	 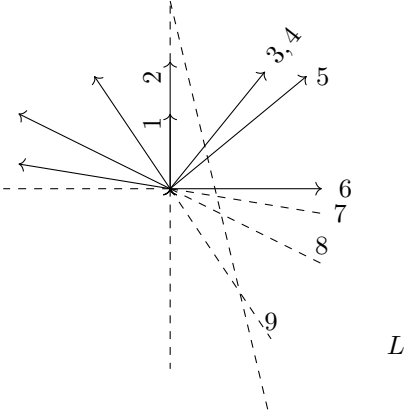
\begin{figure}[htb]
	 	\begin{tikzpicture}
	 	\draw[->] (0,0) edge node[very near end, sloped, above]{$2$} (0,1.7) ;
	 	\draw[dashed] (0,0) edge node[very near end, sloped, above]{} (0,-2.4) ;
	 	\draw[->] (0,0) edge node[very near end, sloped, above]{} (-1,1.5);
	 	\draw[dashed] (0,0) edge node[at end, above]{$9$} (1.33,-2) ;
	 	\draw[->] (0,0) edge node[very near end, sloped, above]{} (-2,1) ;
	 	\draw[dashed] (0,0) edge node[at end, above]{$8$} (2,-1) ;
	 	\path[dashed] (0,0) edge (-2.2, 0);
	 	\path[->] (0,0) edge node[very near end, sloped, above]{$1$} (0,1);
	 	\path[->] (0,0) edge node[at end, above , right = 1mm]{$6$} (2,0) ;
	 	\path[->] (0,0) edge node[sloped ,very near end, above , right = 3mm]{$3,4$} (1.25,1.56) ;
	 	\draw[->] (0,0) edge node[at end, above , right]{$5$} (1.8, 1.5);
	 	\draw[dashed] (0,1.7) edge (0,2.5);
	 	\path[dashed] (0,0) edge node[at end, right= 0.3mm]{$7$} (2,-0.33);
	 	\path[->] (0,0) edge node[at end, right= 0.3mm]{} (-2,0.33);
	 	\draw[dashed] (0,2.5) edge node[near end, right = 20mm, below = 10mm, above= 3mm]{$L$} (1.3,-3);
	 	\end{tikzpicture}\\
	 	\caption{Recursive atom ordering}
	 	\label{rec}
	 \end{figure}

An atom of $\mbox{MacP}(2,n)_{\leq M} \cup \{\hat{0}\}$ can be identified with $(i, j)$ for some $i < j$ in the above ordering, and the vectors labeled by $i,j$ form a basis for $M$. 
	  
	  We will also let $(i, jr)$ denote a rank $2$ oriented matroid in $\mbox{MacP}(2,n)_{\leq M} \cup \{\hat{0}\}$ covering the atoms $(i,j)$ and $(i,r)$, with element labeled $j$ parallel/anti-parallel to element labeled $r$. Similarly, we have a notation of the form $(ir, j)$.
	     
	  We consider a dictionary order for ordered pairs for the atoms of $M$ as $X_1, X_2,\ldots, X_k$. We now verify the conditions in Definition ~\ref{rao}. By Lemma ~\ref{lem}, the interval $[X_i, M]$ is totally semimodular for each $i$, and so it has a recursive atom ordering by Theorem ~\ref{bw}. In fact any ordering of the atoms in $[X_i, M]$ gives a recursive atom ordering of the poset $[X_i, M]$ by Theorem ~\ref{bw}.
	  
	  We now verify the second condition in Definition ~\ref{rao}. Suppose $A_1 = (i_1, j_1) < A_2 = (i_2, j_2)$ in the dictionary order and $A_1, A_2 < Y$ where $Y \in \mbox{MacP}(2,n)_{\leq M} \cup \{\hat{0}\}$. We will consider the following cases:
	  
	  \textbf{Case 1:} Suppose $i_1 = i_2$. Then $j_1 < j_2$. Let $j$ be the maximum label such that $j_1 \leq j < j_2$ and the element labeled by $j$ is a non-loop in $Y$. We obtain $Z$ as $(i_1, jj_2)$, where $j$ and $j_2$ are parallel if $\mathbb{R}_{\geq 0}(\epsilon_1 \cdot v_j), \mathbb{R}_{\geq 0}(\epsilon_2 \cdot v_{j_2})$ intersect $L$ for $\epsilon_i \in \{\pm 1\}$, $\epsilon_1\cdot \epsilon_2 =1$,  and anti-parallel if $\epsilon_1 \cdot \epsilon_2 = -1$. Such a $Z$ covers both $(i_1, j)$ and $(i_1, j_2)$. We have  $Z \leq Y$ and $(i_1, j) < (i_2, j_2)$ in the dictionary order.
	  
	  \textbf{Case 2:} Suppose $i_1 < i_2$. Let $i$ be the maximum label such that $i_1 \leq i < i_2$ and the element labeled $i$ is not a loop in $Y$. Then $Z$ is obtained as $(ii_2, j_2)$. Similarly, $i, i_2$ are parallel if $\mathbb{R}_{\geq 0}(\epsilon_1 \cdot v_i), \mathbb{R}_{\geq 0}(\epsilon_2 \cdot v_{i_2})$ intersect $L$ for $\epsilon_i \in \{\pm 1\}$, $\epsilon_1\cdot \epsilon_2 =1$,  and anti-parallel if $\epsilon_1 \cdot \epsilon_2 = -1$. Such a $Z$ is a rank $2$ oriented matroid covers the atoms $(i, j_2)$ and $(i_2, j_2)$. We have that $Z \leq Y$ and $(i, j_2) < (i_2, j_2)$ in the dictionary order.
	  
	  For each $j > 1$, let $Q_j = \{Y \in \mbox{atom}[X_j, M]: Y \geq X_i \; \mbox{for\; some\;} i  < j\}$. In the recursive atom ordering of $[X_j, M]$ for $j > 1$, we let elements of  $Q_j$ come first. This determine a recursive atom ordering for the poset $[X_i, M]$ by Theorem ~\ref{bw}, as the interval $[X_i, M]$ is totally semimodular for each $i$ by Lemma ~\ref{lem}.
	  
	  Hence the poset $\mbox{MacP}(2,n)_{\leq M} \cup \{\hat{0}\}$ has a recursive atom ordering.

\end{proof}

\begin{lemma}\label{thin}
 Let $N$ be a rank 2 oriented matroid. Then the interval $\mbox{MacP}(2,n)_{\leq N} \cup \{\hat{0}\}$ is thin.
 \end{lemma}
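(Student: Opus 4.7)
The plan is to verify thinness by splitting length-$2$ intervals of $P = \mathrm{MacP}(2,n)_{\leq N} \cup \{\hat{0}\}$ into two kinds: $[\hat{0}, Z]$ with $h(Z) = 1$, and $[X, Y]$ with $X, Y$ rank-$2$ oriented matroids and $h(Y) - h(X) = 2$. I would then show each has exactly four elements, using the cover classification of Proposition \ref{cor1} (every cover is CR1 or CR2) together with the rank formula $h(M) = l_M + p_M - 4$.

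For the interval $[\hat{0}, Z]$, I would use $p_Z \geq 2$ (rank $2$), $l_Z \geq p_Z$ (parallel classes are nonempty), and $l_Z + p_Z = 5$ to force $(l_Z, p_Z) = (3,2)$, so $Z$ has three non-loops split as one parallel/anti-parallel class of size $2$ and one of size $1$. Any CR2 cover of $Z$ would merge its only two parallel classes into one, producing rank $\leq 1$ and thus no cover inside $\mathrm{MacP}(2,n)$. The CR1 rule requires an element $i$ with $|P_Z(i)| \geq 2$, which holds for exactly the two elements of the size-$2$ class. Hence $[\hat{0}, Z]$ has exactly two atoms and four elements.

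For $[X, Y]$ I would enumerate maximal chains $X \lessdot W \lessdot Y$ by the CR-type of each step, giving three sub-cases (CR1+CR1, CR1+CR2, CR2+CR2). In each, the two covering operations act on essentially disjoint data, so swapping their order yields two distinct intermediates in $[X, Y]$, constructed explicitly from the vector realization $Y = \mathrm{Rowspace}(e_1\; e_2\; v_3\; \ldots\; v_n)$ as in the argument following Proposition \ref{prop2}. For CR1+CR1 the only way the two orderings could conflict is if the two looped elements form a parallel class of size exactly $2$, but this case gives $h(Y) - h(X) = 3$ and so does not arise at length $2$. For CR1+CR2, the two intermediates are distinguished by whether the loop element is a loop. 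For CR2+CR2, either the two merged pairs are disjoint (four classes) or share one class (three consecutive classes collapsing to one).

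The main obstacle is ruling out a third intermediate in the CR2+CR2 sub-case when three consecutive classes $A_1, A_2, A_3$ collapse. A priori one might expect a third intermediate merging $A_1 \cup A_3$ while keeping $A_2$ separate, but the cyclic order of parallel classes inherited from the line arrangement in $\mathbb{R}^2$ allows CR2 only between adjacent classes, and $A_1, A_3$ are not adjacent when $A_2$ lies between them. I would formalize this by arguing that any weak-map image of $Y$ in which $A_1$ and $A_3$ are parallel forces $A_2$ to be parallel to them as well, so the configuration $(A_1 \cup A_3,\, A_2)$ is not below $Y$ in $\mathrm{MacP}(2,n)$. The same cyclic-order principle eliminates spurious intermediates in the CR1+CR2 sub-case when the loop element lies in one of the merging classes, completing the thinness verification.
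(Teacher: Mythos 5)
Your overall strategy matches the paper's: both proofs work directly with the cover classification of Proposition \ref{cor1} and split the verification by the CR-types of the two steps in a length-$2$ chain, constructing a second intermediate in each case. Your handling of $[\hat{0},Z]$ by forcing $(l_Z,p_Z)=(3,2)$ from $l_Z+p_Z=5$ is a nice explicit derivation of a fact the paper simply asserts; likewise, the paper's assertion that $3\leq |[N_0,N_2]|\leq 4$ is stated without proof, whereas you make a genuine attempt to justify the upper bound, which is a real improvement in rigor.

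There is, however, a gap in the formalization you propose for the CR2+CR2 sub-case. You claim that any weak-map image of $Y$ in which $A_1$ and $A_3$ coincide ``forces $A_2$ to be parallel to them as well.'' That is true when the identification of $A_1$ and $A_3$ is parallel (then for $a_1\in A_1,\ a_2\in A_2,\ a_3\in A_3$ one has $\chi(a_2,a_3)=\chi(a_2,a_1)=-\chi(a_1,a_2)$, and weak map forces $\chi(a_1,a_2)=0$), but it is \emph{not} true when $A_1$ and $A_3$ are identified anti-parallelly: in that case $\chi(a_2,a_3)=-\chi(a_2,a_1)=\chi(a_1,a_2)$ and $A_2$ can remain a separate class with no contradiction from this triple alone. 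What actually rules out the spurious intermediate is that the anti-parallel identification forces every class \emph{outside} the arc bounded by $A_1$ and $A_3$ to merge with $A_1\cup A_3$ (since for such a class $C$, $\chi_Y(a_1,c)$ and $\chi_Y(a_3,c)$ agree in sign, while in the putative $W_3$ they would be opposite, forcing both to vanish). When $Y$ has $p_Y\geq 4$ classes this is an immediate contradiction, and when $p_Y=3$ the bottom element $X$ would have only one parallel class and hence rank $<2$, so it is not in $\mathrm{MacP}(2,n)$ (nor does the $[\hat 0,Z]$ analysis produce three consecutive classes, since there $(l_Z,p_Z)=(3,2)$). Alternatively, and more cleanly, you can sidestep the chirotope computation entirely by noting that the hypothetical $W_3$ would be covered by $Y$, has the same non-loops and exactly one fewer parallel class, so by Proposition \ref{cor1} it must arise by a CR2 merge of \emph{adjacent} classes — which $A_1,A_3$ are not. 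With either of these repairs your argument goes through; as written, the ``$A_2$ is forced to be parallel'' claim is the weak link.
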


\begin{proof}
In the interval $\mbox{MacP}(2,n)_{\leq N} \cup \{\hat{0}\}$ when $h(N) = 1$ in $\mbox{MacP}(2,n)$, we know that $|\mbox{MacP}(2,n)_{\leq N} \cup \{\hat{0}\}| = 4$. So, we can consider intervals  $[N_0, N_2]$ of length $2$,  where $N_0$ is a rank $2$ oriented matroid. By the covering relations in Proposition ~\ref{cor1}, $3 \leq |[N_0, N_2]| \leq 4$. We will now prove that $|[N_0, N_2]| = 4$. Suppose $N_0 < N_1 < N_2$ is a maximal chain in $[N_0, N_2]$, we will consider the covering relations $N_0 < N_1$ and $N_1 < N_2$ according to Proposition ~\ref{cor1}.

\textbf{Case 1:} If the coverings $N_0 < N_1$ and $N_1 < N_2$ are both \textbf{CR1} of Proposition ~\ref{cor1}. That is, there are non-loops $i, j$ so that the parallel/anti-parallel class $P_1(i)$ has size $|P_1(i)| \geq 2$ in $N_1$ and $i$ is a loop in $N_0$. Also, the parallel/anti-parallel class $P_2(j)$ has size $|P_2(j)|\geq 2$ in $N_2$ and $j$ is a loop in $N_1$. Now, suppose $\mbox{Rowspace}(v_1\; v_2 \ldots \; v_n)$ is determines a vector realization for $N_2$. Then $\mbox{Rowspace}(v_1 \; v_2 \; \ldots v_{i-1} , 0 , v_{i+1}\; \ldots v_n)$ determines a vector realization for a rank $2$ oriented matroid $N_1'\neq N_1$ so that  $N_0 < N_1' < N_2$.

\textbf{Case 2:} If the covering $N_0 < N_1$  is \textbf{CR1} of Proposition ~\ref{cor1} and the covering $N_1 < N_2$ \textbf{CR2} of Proposition ~\ref{cor1}. That is, there is a non-loop $i$ in $N_1$ with $|P_1(i)| \geq 2$ in $N_1$ so that $i$ is a loop in $N_0$. 

If the size of parallel/anti-parallel class $P_2(i)$ is of size $|P_2(i)|\geq 2$ in $N_2$, then we obtain $N_1'$ as in the above case from $N_2$.

If $|P_2(i)| = 1$ in $N_2$, then there are distinct  classes $P_2(j), P_2(k) \neq P_2(i)$ in $N_2$ so that $P_2(i) \cup P_2(j)$ is a parallel/anti-parallel class in $N_1$; and if $\mathrm{Rowspace}(v_1 \; v_2 \; v_3 \; \cdots\\ v_k \cdots v_i \cdots  \; v_n)$ is a vector realization for $N_2$, then $N_2$ covers the rank two oriented matroid $N_1'$ obtained from $\mathrm{Rowspace}(v_1 \; v_2 \; v_3 \; \cdots v_k \cdots v_{i-1}\; v_k \; v_{i+1} \cdots  \; v_n)$. So that $N_0 < N_1' < N_2$ and $N_1 \neq N_1'$.

\textbf{Case 3:} If the covering $N_0 < N_1$ is \textbf{CR2} of Proposition ~\ref{cor1} and the covering $N_1 < N_2$ is \textbf{CR1} of Proposition ~\ref{cor1}. That is, there is a non-loop $k$ in $N_2$ so that $|P_2(k)| \geq 2$ and $k$ is a loop in $N_1$. Let $j \in P_2(k)\setminus \{k\}$ and $X_0= \mbox{Rowspace}(w_1\; w_2 \ldots \; w_n)$ determines a vector realization for $N_0$. Then $w_k = 0$. We obtain the vector realization $X_1'$ that determines $N_1'$ by replacing $w_k = 0$ with $w_k' =  w_j$. So that $N_0 < N_1' < N_2$ and $N_1 \neq N_1'$.


\textbf{Case 4:} If the coverings $N_0 < N_1$  and $N_1 < N_2$ are both \textbf{CR2} of Proposition ~\ref{cor1}. That is, there are classes $P_1(l), P_1(j)$ in $N_1$ so that $P_1(l) \cup P_1(j)$ is a parallel/anti-parallel class in $N_0$ and there are classes $P_2(k), P_2(r)$ in $N_2$ so that $P_2(k)\cup P_2(r)$ is a parallel/anti-parallel class in $N_1$. Let $X_0 = \mbox{Rowspace}(v_1 \; v_2 \ldots \; v_n)$ determines a vector realization for $N_0$,  and $v_k = r_ke^{i\theta_k}$.

We can similarly obtain a vector $X'_1 = \mbox{Rowspace}(w_1\; w_2 \ldots \; w_n)$ as in the observation after Proposition \ref{prop2} so that $N_1'$ the rank $2$ oriented matroid determined by $X_1'$ is such that $N_0 < N_1' < N_2$ and $N_1' \neq N_1$.
	
\end{proof}

\begin{prop} \label{shell}
Let $M$ be a rank $2$ oriented matroid. Then the interval $\mathrm{MacP}(2,n)_{\leq M} \cup \{\hat{0}\}$ is the augmented face poset of a regular cell decomposition of a $PL$ sphere.
\end{prop}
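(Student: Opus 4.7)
The plan is to recognize that this proposition is essentially the culmination of the preceding two results and invoke Theorem \ref{bjorner} directly. That theorem asserts that any poset containing $\hat{0}$ and $\hat{1}$ which is both thin and admits a recursive atom ordering is the augmented face poset of a regular cell decomposition of a $PL$ sphere. Our poset $P = \mathrm{MacP}(2,n)_{\leq M} \cup \{\hat{0}\}$ has $\hat{1} = M$ by construction, and $\hat{0}$ has been adjoined, so the structural hypothesis is immediate.

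First I would note that Proposition \ref{rao} establishes precisely that $P$ admits a recursive atom ordering; the ordering is obtained by ordering the atoms of $M$ (which are pairs $(i,j)$ representing two-element bases) in dictionary order, after labeling the non-zero vectors of a realization by the order in which their spans meet a generic affine line $L$. Second, Lemma \ref{thin} shows that $P$ is thin: every length-two interval in $\mathrm{MacP}(2,n)_{\leq M}$ has exactly four elements, and the length-two intervals involving $\hat{0}$ are handled by the observation at the base case of the induction in Lemma \ref{thin} (where the rank-$1$ case of $h(N) = 1$ gives $|\mathrm{MacP}(2,n)_{\leq N} \cup \{\hat{0}\}| = 4$, confirming thinness at the bottom).

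With both hypotheses verified, the proof reduces to a single application of Theorem \ref{bjorner}. So the proof proposal is: by Proposition \ref{rao}, the interval $\mathrm{MacP}(2,n)_{\leq M} \cup \{\hat{0}\}$ admits a recursive atom ordering; by Lemma \ref{thin}, it is thin; hence by Theorem \ref{bjorner}, it is the augmented face poset of a regular cell decomposition of a $PL$ sphere.

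There is no real obstacle here — the proposition is a direct corollary of the work done in Proposition \ref{rao}, Lemma \ref{thin}, and Theorem \ref{bjorner}. The only subtlety worth flagging is making sure that the thinness lemma, as stated, covers all length-two intervals in $P$ (including those that reach down to $\hat{0}$), but this follows from the small cardinality count at the bottom of the poset. The dimension of the sphere will be $h(M) - 1$, where $h(M) = l_M + p_M - 4$ is the rank function established in Proposition \ref{prop1} and the corollary following Proposition \ref{cor1}, which will be used later to match this sphere with $\partial \overline{\mu^{-1}(M)}$.
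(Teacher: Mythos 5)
Your proof is correct and matches the paper's own proof exactly: the paper's argument is precisely to cite Proposition \ref{rao} for the recursive atom ordering, Lemma \ref{thin} for thinness, and then apply Theorem \ref{bjorner}. The extra remarks you add about $\hat{1}=M$, the bottom-of-poset check, and the sphere dimension $h(M)-1$ are consistent with the paper and do not change the route.
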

\begin{proof}[Proof of Proposition \ref{shell}]
	The proposition now follows from Proposition ~\ref{rao}, Lemma ~\ref{thin} and Theorem \ref{bjorner}. 
\end{proof}

\section{The topology of $\overline{\mu^{-1}(M)}$}\label{sec:top_1}

To prove that $\overline{\mu^{-1}(M)}$ is homeomorphic to a closed ball, we will first prove that $\overline{\mu^{-1}(M)}$ is a topological manifold with boundary $\bigcup_{N < M} \mu^{-1}(N)$ as suggested by the following theorems and propositions.

\begin{thm}\label{top}
 Let $M \in \mbox{MacP}(2,n)$. The closure $\overline{\mu^{-1}(M)}$ is a  topological manifold whose boundary is $\bigcup_{N < M} \mu^{-1}(N)$.
 \end{thm}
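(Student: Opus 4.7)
The plan is to realize $\overline{\mu^{-1}(M)}$ as the image of the closed ball $B_M$ from the paragraph preceding Figure \ref{viz_0} under a sequence of cell collapses of the type defined by Hersh, and then to invoke Theorem \ref{hersh:thm} to conclude that the underlying space remains homeomorphic to $B_M$ at each stage. Since $B_M$ is a closed Euclidean ball, this will establish that $\overline{\mu^{-1}(M)}$ is a topological manifold with boundary. The map $\Phi\colon B_M \to \overline{\mu^{-1}(M)}$ used is the natural parametrization from identification (ID1) of Section \ref{sec:ball_1}: it sends $((\theta_l),(\eta_j),(r_k))$ to the rowspace of the $2\times n$ matrix whose first two columns are $e_1,e_2$ and whose column in position $k$ (for $k\geq 3$) is $r_k e^{i\theta_k}$. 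This map is continuous, surjects onto $\overline{\mu^{-1}(M)}$ by Proposition \ref{prop2}, and is injective on the interior by (ID1).

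The cell collapses all live on $\partial B_M$ and come from the observation that $\Phi$ fails to be injective there in a very controlled way: whenever a radius $r_k$ reaches $0$ the value of $\theta_k$ ceases to affect the image, so entire angular sub-simplices collapse to a single subspace, and whenever several angular coordinates coincide, the parallel/anti-parallel class structure of the image depends only on the resulting coincidence pattern. Each such identification can be realized exactly as in Example \ref{ex} and Figure \ref{viz_0}, via a family of parallel-like line segments on a closed face of the current complex running between two prescribed subfaces. I will enumerate these identifications, order them in a sequence respecting the rank filtration of $\mathrm{MacP}(2,n)_{\leq M}$ (performing the highest-codimension identifications first), and for each construct the family $\mathcal{C}_i$ of parallel-like curves specifying the corresponding collapse $g_i$. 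Theorem \ref{hersh:thm} then guarantees that each $g_i$ preserves the underlying topological space, so the composite $g_s\circ\cdots\circ g_1$ yields a homeomorphism of $B_M$ onto $\overline{\mu^{-1}(M)}$. The equality of the boundary with $\bigcup_{N<M}\mu^{-1}(N)$ follows from Proposition \ref{prop2}, together with the fact that, under the parametrization, codimension-one faces of $B_M$ correspond exactly to the two cover types \textbf{CR1} and \textbf{CR2} of Proposition \ref{cor1}.

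The main obstacle is the combinatorial bookkeeping required to make the sequence of collapses legal in the Hersh framework: at each stage one must verify that $\mathcal{C}_i$ lies on a genuine closed face of the intermediate complex $K_{i-1}$, that its curves remain parallel-like after the earlier collapses have been performed (condition (iii) of the parallel-like curve definition), and that $g_i\circ\cdots\circ g_1$ acts homeomorphically on the face being collapsed next. Ordering the collapses so that these conditions are simultaneously maintained across the different angular coordinates and radii, and matching the resulting cell structure back against the cover relations of Proposition \ref{cor1}, is where the real work of the proof lies. Once this combinatorial setup is in place, Hersh's theorem supplies the homeomorphism and the manifold-with-boundary conclusion follows.
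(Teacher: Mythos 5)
Your proposal attempts a global version of the paper's local argument, and it runs into a genuine obstruction at the very first step: the parametrization $\Phi$ built from (ID1) does not surject onto $\overline{\mu^{-1}(M)}$. The chart (ID1) writes a $2$-plane as $\mathrm{Rowspace}(e_1\,e_2\,v_3\cdots v_n)$, which is possible exactly when $\{1,2\}$ is a basis of the corresponding oriented matroid. But $\overline{\mu^{-1}(M)}$ contains strata $\mu^{-1}(N)$ with $N<M$ in which $\{1,2\}$ is not a basis: $N$ may have $1$ or $2$ as a loop, or $1$ and $2$ may become parallel in $N$. Such limit planes are approached by letting a column grow without bound (e.g.\ $v_3^{(k)}=(k,1)$ drives the plane toward one on which the projection to coordinates $1,2$ degenerates), so they lie in the closure but outside the image of $\Phi$ for every $((\theta),(\eta),(r))$. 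Compounding this, the compact box $B_M$ confines each radius to a bounded interval, so it also misses strata requiring $r_j\to\infty$ or $r_j=0$, while replacing $[a,b]$ by $[0,\infty)$ costs the compactness on which the collapse argument relies. The appeal to Proposition~\ref{prop2} for surjectivity is a non sequitur: that proposition identifies which strata form the boundary, not that the fixed chart (ID1) reaches them.

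The paper avoids this by arguing locally, which is all Theorem~\ref{top} requires. For each $X\in\partial\overline{\mu^{-1}(M)}$ with $N=\mu(X)$, one chooses a basis $\{1,2\}$ of $N$ (hence of $M$) and builds the compact box $B_M$ with radii in $[r_j/2,\,r_j+1]$ centered at the coordinates of $X$; on all strata $\mu^{-1}(N')$ with $N\le N'\le M$ the set $\{1,2\}$ remains a basis, so $T$ is well defined near $X$, and Hersh's collapse theorem then shows $T'(B_M/\!\sim)$ is a closed-ball neighborhood of $X$, which is precisely the manifold-with-boundary condition. The global ball conclusion you are driving toward is the content of the subsequent Theorem~\ref{cball}, and the paper proves it by a different mechanism: induction on $h(M)$, showing the boundary is a PL sphere via the recursive atom ordering and thinness (Theorem~\ref{bjorner}), then invoking the collar theorem and Cantrell's result through Proposition~\ref{ball}. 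If you wish to pursue a global collapse, you would first need a compactified chart covering the degenerate-basis strata; that is exactly the difficulty the local and inductive arguments are designed to bypass.
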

\begin{proof}
	That the boundary of $\overline{\mu^{-1}(M)} = \bigcup_{N < M} \mu^{-1}(N)$ was obtained in Proposition \ref{prop2}.
	
	For every $X \in \partial\overline{\mu^{-1}(M)}$, we will obtain a closed neighborhood $N_X$ of $X$ that is homeomorphic to a closed ball of dimension $h(M)$ and such that $X$ is a point on the boundary of $N_X$.
	
	Let $N = \mu(X)$. We may assume that $\{1,2\}$ is a basis of $N$. So, $X$ can be obtained as $X = \mbox{Rowspace}(e_1 \; e_2 \; v_1 \; v_2 \cdots \; v_{n-2}) \in  \partial\overline{\mu^{-1}(M)}$.

	We recall from $\textbf{ID1}$ preceding Proposition ~\ref{prop1} the following identification
	$$\mu^{-1}(M) \cong \mbox{int}(\{(x_1, x_2, \ldots, x_{n_1}) : 0 \leq x_1 \leq x_2 \cdots \leq x_{n_1} \leq\frac{\pi}{2}\}) 
	\times $$ $$\mbox{int}(\{(y_1, y_2, \ldots, y_{n_2}) : \frac{\pi}{2} \leq y_1 \leq y_2 \leq \cdots \leq y_{n_2}\leq\pi \}) \times (0, \infty)^{L_M-2}.$$
	
	Let $r_j = |v_j|$ for $j \geq 3$. We define a closed ball of dimension $\dim(\mu^{-1}(M))$ as follows: 
	 $$ B_M =  \left \{(x_1, x_2, \ldots, x_{n_1}) : 0 \leq x_1 \leq x_2 \cdots \leq x_{n_1} \leq\frac{\pi}{2}\right \} 
	 \times$$  $$\left\{(y_1, y_2, \ldots, y_{n_2}) : \frac{\pi}{2} \leq y_1 \leq y_2 \leq \cdots \leq y_{n_2}\leq\pi \right\} \times  \prod_{j=1}^{L_M-2} [\frac{r_j}{2}, r_j+1].$$
	 
	 There is a map $T: B_M \rightarrow \overline{\mu^{-1}(M)}$ defined as follows:
	 
	 $\left((\theta_1, \theta_2, \ldots \theta_{n_1}), (\theta_{n_1+1}, \theta_{n_1+2}, \ldots \theta_{n_1+n_2}), (r_j)_j\right) \mapsto \mathrm{Rowspace}(e_1 \; e_2 \; w_1 \; w_2 \; \cdots w_{n-2})$, where $w_j= r_je^{i\theta_j}$. $X$ is contained in the image of $T$.
	 
	 The map $T$ is not necessarily one-to-one on the boundary of $B_M$. Figure \ref{viz_0} illustrates two points on the boundary $\{r_3 = 0, r_6 = 0\}$ of $B_M$ with the same image under $T$.
	 
	 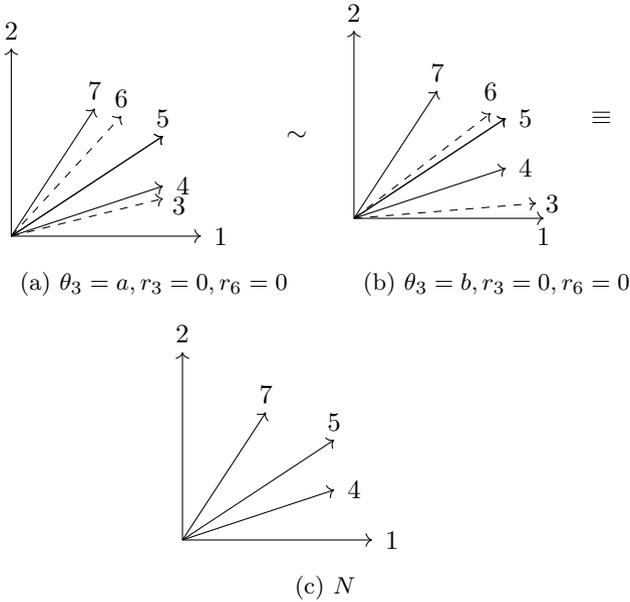
\begin{figure}[!htb]
	 	\centering
	 \begin{subfigure}[t]{0.35\textwidth}
	 	\begin{tikzpicture}
	 	\path[->] (0,0) edge node[at end, right = 0.5mm]{$4$} (3/1.5,1/1.5) ;
	 	\path[->] (0,0) edge node[at end, above]{$2$} (0,2.5) ;
	 	\path[->] (0,0) edge node[at end, right= 0.5mm]{$1$} (2.5,0) ;
	 	\path[->] (0,0) edge node[at end, above]{$5$} (3/1.5,2/1.5)  ;
	 	\path[->] (0,0) edge node[at end, above]{$7$} (1.1, 1.7) ;
	 	\path[dashed][->] (0,0) edge node[at end, below=1mm, right]{$3$} (2.0, 0.50) ;
	 	\path[dashed][->] (0,0) edge node[at end, above]{$6$} (1.45, 1.6);
	 	\path[->] (0,0) edge node[at end, above,  right = 15mm]{$\sim$} (3/1.5, 2/1.5);
	 	\end{tikzpicture}
	 	\caption{$\theta_3 = a, r_3 = 0, r_6 = 0$}
	 	\end{subfigure}
	 ~
	 \begin{subfigure}[t]{0.35\textwidth}
	 	\begin{tikzpicture}
	 	\path[->] (0,0) edge node[at end, right = 0.5mm]{$4$} (3/1.5,1/1.5) ;
	 	\path[->] (0,0) edge node[at end, above]{$2$} (0,2.5) ;
	 	\path[->] (0,0) edge node[at end, below]{$1$} (2.5,0) ;
	 	\path[->] (0,0) edge node[at end, above, right = 0.5mm]{$5$} (3/1.5,2/1.5)  ;
	 	\path[->] (0,0) edge node[at end,above]{$7$} (1.1, 1.7) ;
	 	\path[dashed][->] (0,0) edge node[at end, above = 0.5mm ]{$6$} (1.8, 1.4);
	 	\path[->] (0,0) edge node[at end, above,  right = 10mm]{$\equiv$} (3/1.5, 2/1.5);
	 	\path[dashed][->] (0,0) edge node[at end, right]{$3$} (2.4, 0.2);
	 	\end{tikzpicture}
	 	\caption{$\theta_3 = b, r_3 = 0, r_6 = 0$}
	 	\end{subfigure}
 	 ~
 	\begin{subfigure}[t]{0.35\textwidth}
 		\begin{tikzpicture}
 		\path[->] (0,0) edge node[at end, right = 0.5mm]{$4$} (3/1.5,1/1.5) ;
 		\path[->] (0,0) edge node[at end, above = 0.07mm]{$2$} (0,2.5) ;
 		\path[->] (0,0) edge node[at end, right= 0.5mm]{$1$} (2.5,0) ;
 		\path[->] (0,0) edge node[at end, above]{$5$} (3/1.5,2/1.5)  ;
 		\path[->] (0,0) edge node[at end,above]{$7$} (1.1, 1.7) ; 
 		\end{tikzpicture}
 		\caption{$N$}
 	\end{subfigure}
	 	\caption{An identification on the boundary of $B_M$.}
	 	\label{viz_0}
	 \end{figure}

Let $\sim$ be such identifications on the boundary of $B_M$ given by $x \sim y$ if and only if $T(x) = T(y)$. The identification is as discussed in Example \ref{ex}.  
	 
The map $T' = T/\sim \; : B_M/\sim  \; \rightarrow \overline{\mu^{-1}(M)}$ is injective. By Theorem ~\ref{hersh}, $B_M/\sim $ is homeomorphic to $B_M$. The image $T'(B_M/\sim)$ is homeomorphic to $B_M/\sim$ by the compactness of $B_M/\sim$ and $T'(B_M/\sim) \subset \overline{(\mu^{-1}(M))}$ being Haursdorff  . 
	 
So, the desired closed neighborhood of $X$ in $\overline{\mu^{-1}(M)}$ is $T'(B_M/\sim)\equiv B_M/\sim$ and it is homeomorphic to $B_M$ a closed ball of dimension $\dim(\mu^{-1}(M))$. Hence, $\overline{\mu^{-1}(M)}$ is a topological manifold with boundary.

\end{proof}

The following theorem will now follow from  Proposition ~\ref{prop2},  Proposition ~\ref{rao}, Lemma ~\ref{thin}, Theorem ~\ref{bjorner} and Theorem ~\ref{top}. 

\begin{thm}\label{cball}
	Let $M$ be a rank $2$ oriented matroid. There is an homeomorphism from $\overline{\mu^{-1}(M)}$ to an $h(M)$-dimensional closed ball. 
\end{thm}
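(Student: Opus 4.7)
The plan is to invoke Proposition \ref{ball}, which reduces the theorem to verifying three conditions for $G = \mu^{-1}(M)$: that $G$ is homeomorphic to an open ball, that $\overline{G}$ is a topological manifold, and that $\partial \overline{G}$ is a sphere. The first two are immediate from earlier results in this chapter: Proposition \ref{prop1} identifies $\mu^{-1}(M)$ with an open ball of dimension $h(M)$, and Theorem \ref{top} establishes that $\overline{\mu^{-1}(M)}$ is a topological manifold with boundary $\bigcup_{N<M}\mu^{-1}(N)$. So the real work is to show that this boundary is a sphere of dimension $h(M)-1$.

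For the sphere condition I would proceed by induction on the rank $h(M)$. The base cases $h(M) \leq 1$ are handled directly from the explicit description of $\mu^{-1}(M)$ in (\textbf{ID1}). For the inductive step, assume the theorem for all $N$ with $h(N) < h(M)$; then for every $N < M$, $\overline{\mu^{-1}(N)}$ is a closed ball of dimension $h(N)$, and by applying Proposition \ref{prop2} to $N$ its topological boundary is the union of strictly smaller cells $\bigcup_{N' < N}\mu^{-1}(N')$. This equips $\bigcup_{N<M}\mu^{-1}(N)$ with the structure of a regular CW complex whose face poset is precisely $\mathrm{MacP}(2,n)_{<M}$.

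Now Proposition \ref{shell} asserts that $\mathrm{MacP}(2,n)_{\leq M}\cup\{\hat{0}\}$ is the augmented face poset of a regular cell decomposition of a PL sphere of dimension $h(M)-1$; deleting both $\hat{0}$ and the top element $M$ yields $\mathrm{MacP}(2,n)_{<M}$ as the face poset of that PL sphere. Since a regular CW complex is determined up to homeomorphism by its face poset, it follows that $\bigcup_{N<M}\mu^{-1}(N)$ is homeomorphic to $S^{h(M)-1}$. Feeding the three ingredients into Proposition \ref{ball} then completes the induction and the theorem.

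The main obstacle is justifying that the boundary carries the structure of a regular CW complex with the expected face poset; this is exactly what forces the induction, since without knowing a priori that every $\overline{\mu^{-1}(N)}$ for $N<M$ is a closed ball one cannot close the cells of the boundary stratification. A small technical point to be careful about is that the identification of the two face posets actually produces a homeomorphism of the underlying spaces — this is the classical fact that any two regular CW complexes with isomorphic face posets have homeomorphic realizations, which is used implicitly when passing from the combinatorial sphere of Proposition \ref{shell} to the geometric boundary $\bigcup_{N<M}\mu^{-1}(N)$.
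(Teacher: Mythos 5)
Your proposal follows essentially the same route as the paper: induction on $h(M)$ so that the inductive hypothesis closes the cells of $\bigcup_{N<M}\mu^{-1}(N)$ into a regular CW complex with face poset $\mathrm{MacP}(2,n)_{<M}$, then identification of that complex with the PL sphere supplied by the shellability results, and finally Proposition~\ref{ball} applied to the open ball $\mu^{-1}(M)$ (Proposition~\ref{prop1}), the manifold structure on $\overline{\mu^{-1}(M)}$ (Theorem~\ref{top}), and the sphere boundary just established. The only cosmetic difference is that you cite Proposition~\ref{shell} directly where the paper re-lists its ingredients (Proposition~\ref{rao}, Lemma~\ref{thin}, Theorem~\ref{bjorner}), and you helpfully flag the implicit use of the fact that isomorphic face posets of regular CW complexes yield homeomorphic underlying spaces.
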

\begin{proof}
The proof proceeds by induction on the height of $M$ in $\mbox{MacP}(2,n)$. The theorem is true for $h(M) = 0$ in which case $\overline{\mu^{-1}(M)}$ is a $0$-dimensional ball. Suppose $h(M) = l$, $l\geq 1$, and assume the theorem is true for every $N$ in $\mathrm{MacP}(2,n)$ with $h(N) < l$, that is, $\overline{\mu^{-1}(N)}$ is a $h(N)$-dimensional closed ball. From Proposition ~\ref{prop2}, we know that $\partial\overline{(\mu^{-1}(M))} = \bigcup_{N < M}(\mu^{-1}(N))$, and by our inductive assumption $\bigcup_{N < M}(\mu^{-1}(N))$ is a regular cell complex with cells $\{\mu^{-1}(N) : N < M\}$.

Proposition ~\ref{rao} says that the interval $[\hat{0},M]$ has a recursive atom ordering, and the interval is thin by Lemma ~\ref{thin}. By Theorem ~\ref{bjorner}, $\mathrm{MacP}(2,n)_{\leq M} \cup \{\hat{0}\}$ is an augmented poset of a regular cell decomposition of a $h(M)-1$-dimensional sphere. Hence, the regular cell complex $\partial\overline{(\mu^{-1}(M))} = \bigcup_{N < M}(\mu^{-1}(N))$ is homeomorphic to a sphere of dimension $h(M)-1$.

So, the boundary of $\mu^{-1}(M)$ is homeomorphic to a $(h(M) -1)$ sphere.  By Theorem ~\ref{top} , we now know that  $\overline{\mu^{-1}(M)}$ is a topological manifold; $\overline{\mu^{-1}(M)} $ is a topological manifold whose boundary $\bigcup_{N < M}(\mu^{-1}(N))$ is a sphere of dimension $h(M)-1$, and its interior $\mu^{-1}(M)$ is an open ball by Proposition ~\ref{prop1}.

Hence,  by  Proposition ~\ref{ball}, we obtain an homeomorphism from $\overline{\mu^{-1}(M)}$ to a closed $h(M)$-dimensional ball.
\end{proof}

\begin{proof}[Proof of Theorem ~\ref{main}]
That $\{\mu^{-1}(M) : M \in \mbox{MacP}(2,n)\}$ forms a regular cell decomposition of $Gr(2,\mathbb{R}^n)$ follows from Theorem \ref{cball}. Hence, $\|\mathrm{MacP}(2,n)\|$ is homeomorphic to $\mathrm{Gr}(2, \mathbb{R}^n)$.
\end{proof}

\section{The stratification $\nu^{-1}(N, M)$} \label{sec:ball_2}
As described in the background section, each point $(Y , X)$ in $Gr(1,2,\mathbb{R}^n)$ can be identified with a point in $\mathrm{Gr}(2,n)$ as illustrated in Figure \ref{fig4}. In the Figure, we may assume that $\{1,2\}$ is a basis of $M = \mu(X)$, so that $(Y, X)$ is identified with $\mathrm{Rowspace}(e_1 \; e_2 \; v_3 \; v_4 \cdots \; v_{n+1})$

\begin{figure}[!htb]
	\centering
\begin{subfigure}[t]{0.35\textwidth}
	\begin{tikzpicture}
\path[->] (0,0) edge node[at end, right=0.5mm]{$3$} (3,1) ;
\path[->] (0,0) edge node[at end, above = 0.5mm]{$2$} (0,3) ;
\path[->] (0,0) edge node[at end, right= 0.5mm]{$1$} (3,0) ;
\path[->] (0,0) edge node[at end, right = 0.5mm]{$4$} (3,2)  ;
\path[->] (0,0) edge node[at end, right = 0.5mm]{$5$} (3,4) ;
\draw[dashed, ->] (0,0) edge node[at end , right = 0.5mm]{$v_{n+1}$} (2.8, 2.8);
	\end{tikzpicture}
	\caption{$h_M(N) = 1$}
\end{subfigure}
~
\begin{subfigure}[t]{0.35\textwidth}
\begin{tikzpicture}
\path[->] (0,0) edge node[at end, below=0.25mm]{$3$} (3,1) ;
\path[->] (0,0) edge node[at end, above = 0.5mm]{$2$} (0,3) ;
\path[->] (0,0) edge node[at end, right= 0.5mm]{$1$} (3,0) ;
\path[->] (0,0) edge node[at end, right = 0.5mm]{$4$} (3,2);
\path[->] (0,0) edge node[at end, right = 0.5mm]{$5$} (3,4) ;
\draw[dashed, ->] (0, 0) edge node[at end, right = 0.5mm]{$v_{n+1}$} (4.2, 1.4);

\end{tikzpicture}
\caption{$h_M(N) = 0$}
\end{subfigure}
\caption{Elements in $\mathrm{Gr}(1,2, \mathbb{R}^n).$}
\label{fig4}
\end{figure}
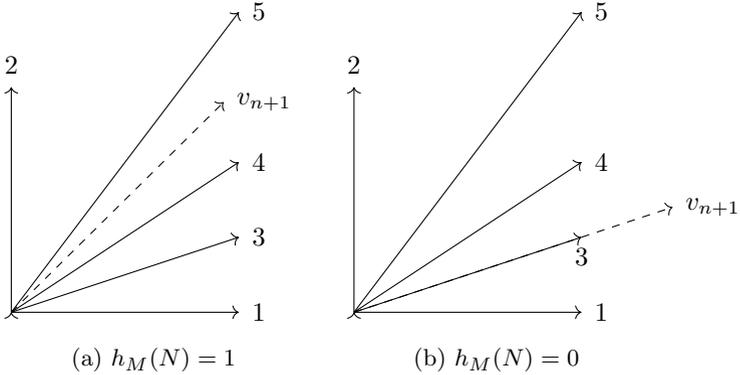

  \begin{nota}
  Let $M$ be a rank $2$ oriented matroid and $N \in G(1, M)$ a rank $1$ strong map image of $M$. The height of $N$ in the poset $G(1, M)$ is denoted by $h_M(N)$.\\
 \end{nota}
 
 \begin{thm}\label{main1}
 	$\{\nu^{-1}(N ,M) : (N, M) \in \mathrm{MacP}(1,2,n)\}$ forms a regular cell decomposition of $Gr(1,2,\mathbb{R}^n)$. 
 \end{thm}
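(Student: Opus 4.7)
The plan is to mirror the proof of Theorem \ref{main} in the Grassmannian case, stringing together the analogous results for flags that are established in Sections \ref{sec:ball_2}, \ref{sec:shell_2}, and \ref{sec:top_2}. First, the strata $\{\nu^{-1}(N,M) : (N,M) \in \mathrm{MacP}(1,2,n)\}$ are pairwise disjoint and cover $\mathrm{Gr}(1,2,\mathbb{R}^n)$ by construction of $\nu$. By Proposition \ref{homeo}, each stratum is homeomorphic to an open ball, and by Proposition \ref{bdry1}, its frontier is $\bigcup_{(N',M') < (N,M)} \nu^{-1}(N',M')$, a union of strictly lower-dimensional strata. These two facts together with the closed-ball statement below will verify the defining conditions of a regular CW complex.

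The bulk of the work is to show that each closure $\overline{\nu^{-1}(N,M)}$ is homeomorphic to a closed ball of the same dimension as $\nu^{-1}(N,M)$, with the open cell mapped to the interior; this is the flag analog of Theorem \ref{cball}. I would argue by induction on the rank of $(N,M)$ in $\mathrm{MacP}(1,2,n)$. Given the result for all smaller flags, Proposition \ref{ball} delivers the conclusion provided three ingredients are supplied: the interior $\nu^{-1}(N,M)$ is an open ball (Proposition \ref{homeo}); the boundary $\bigcup_{(N',M') < (N,M)} \nu^{-1}(N',M')$ is a PL sphere of the correct dimension; and $\overline{\nu^{-1}(N,M)}$ is a topological manifold with boundary. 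For the sphere statement, the recursive atom ordering and thinness of the interval $\mathrm{MacP}(1,2,n)_{\leq (N,M)} \cup \{\hat 0\}$ established in Section \ref{sec:shell_2}, together with Theorem \ref{bjorner}, show that this interval is the augmented face poset of a regular cell decomposition of a PL sphere, and the inductive hypothesis identifies the abstract cell complex with the topological space $\bigcup_{(N',M') < (N,M)} \nu^{-1}(N',M')$.

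The main obstacle is the manifold statement for $\overline{\nu^{-1}(N,M)}$, which is the content of Section \ref{sec:top_2}. In the Grassmannian case, Theorem \ref{top} built a closed collar neighborhood of each boundary point as a quotient of a product $B_M$ under cell-collapse identifications modeled on Example \ref{ex} and Theorem \ref{hersh:thm}. For flags, each point additionally records a line $Y \subseteq X$, so the local model acquires an extra coordinate and the cell-collapse bookkeeping becomes more intricate. My preferred route is to exploit the embedding $\varphi\colon \bigcup_{(N,M) \geq (N_0,M_0)} \nu^{-1}(N,M) \hookrightarrow \mathrm{Gr}(2,\mathbb{R}^{n+1})$ together with the \textbf{SUBP} identification: near any point, the local manifold structure is inherited from Theorem \ref{top} applied to the rank-$2$ oriented matroid $\iota(N,M)$, and the image of $\varphi$ is cut out transversally by the linear constraint that the appended $(n{+}1)$st vector be orthogonal to a chosen direction in $Y$. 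Once this local manifold structure is in place, Proposition \ref{ball} concludes that $\overline{\nu^{-1}(N,M)}$ is a closed ball, and the theorem follows at once, in the same way that Theorem \ref{main} was deduced from Theorem \ref{cball}.
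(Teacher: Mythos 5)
Your proposal is correct and follows the paper's overall structure for Theorem~\ref{main1}: assemble Proposition~\ref{homeo} (open balls), Proposition~\ref{bdry1} (boundaries), the shellability and thinness results of Section~\ref{sec:shell_2} with Theorem~\ref{bjorner} (boundaries are PL spheres), the manifold-with-boundary result (Theorem~\ref{top1}), and Proposition~\ref{ball}, running an induction on rank as in Theorem~\ref{cball}. The one place where you diverge from the paper is in how you propose to establish the manifold statement of Theorem~\ref{top1}. The paper writes down the local chart $B_{(N,M)}$ directly --- it is $B_M$ with an extra coordinate $\Theta$ inserted into the simplex factor --- and runs the cell-collapse argument of Theorem~\ref{top} verbatim. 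You instead propose to import the local structure through the embedding $\varphi$ and the \textbf{SUBP} identification: view $\nu^{-1}(N,M)$ locally inside $\mathrm{Gr}(2,\mathbb{R}^{n+1})$, apply Theorem~\ref{top} to $\iota(N,M)$, and cut down by one dimension. This is valid, and in fact yields exactly the paper's $B_{(N,M)}$, since the element $n+1$ is never a loop (so $\theta_{n+1}$ contributes the extra $\Theta$ in the simplex factor) and the cell-collapse curves fix all radial coordinates, so they restrict to the slice. One phrasing in your sketch is slightly off, though: the constraint that locally cuts out the image of $\varphi$ inside $\overline{\mu^{-1}(\iota(N,M))}$ is the normalization $\|v_{n+1}\|=1$, i.e.\ $r_{n+1}=1$ in the chart coordinates; the orthogonality of $v_{n+1}$ to the direction of $Y$ is part of the \emph{definition} of $\varphi$ (it tells you which $v_{n+1}$ to append) rather than a condition characterizing the image. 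With that correction, the slicing argument goes through and gives a clean alternative to the paper's direct construction of $B_{(N,M)}$.
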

 
Let $(Y, X) \in \mathrm{Gr}(1,2, \mathbb{R}^n)$, $N = \mu(Y)$ and $M = \mu(X)$. By the unique identification of a point $(Y, X) \in \mathrm{Gr}(1,2, \mathbb{R}^n)$ with the point $\mathrm{Rowspace}(e_1\; e_2 \; v_3 \; v_4 \; \cdots \; v_{n+1} )$ satisfying $0 \leq \mathrm{Arg}(v_j) < \pi$ for $j \geq 3$ and $\|v_{n+1}\| = 1$, we have the following identification of $\nu^{-1}(N, M)$:

By (\textbf{ID1}), $\mu^{-1}(M)$ is identified with 

$\mu^{-1}(M) \equiv A_M = \mathrm{Interior}(\{(\theta_{l_1}, \theta_{l_2}, \ldots, \theta_{l_{n_1}}) : 0 \leq \theta_{l_1} \leq \theta_{l_2} \cdots \leq \theta_{l_{n_1}} \leq\frac{\pi}{2}\} \times \{(\theta_{j_1}, \theta_{j_2}, \ldots, \theta_{j_{n_2}}) : \frac{\pi}{2} \leq \theta_{j_1} \leq \theta_{j_2} \leq \cdots \leq \theta_{j_{n_2}}\leq\pi \} \times [0, \infty)^{L_M-2})$. Then $\nu^{-1}(N, M)$ can thus be identified with 

$(\textbf{ID2}) \; \; \nu^{-1}(N, M) \equiv A_{(N,M)} = \mathrm{Interior}(\{(\theta_{l_1}, \theta_{l_2}, \ldots, \Theta,  \ldots , \theta_{l_{n_1}}) : 0 \leq \theta_{l_1} \leq \theta_{l_2} \cdots  \leq \Theta \leq \cdots \leq  \theta_{l_{n_1}} \leq\frac{\pi}{2}\} \times \{(\theta_{j_1}, \theta_{j_2}, \ldots, \theta_{j_{n_2}}) : \frac{\pi}{2} \leq \theta_{j_1} \leq \theta_{j_2} \leq \cdots \leq \theta_{j_{n_2}}\leq\pi \} \times [0, \infty)^{L_M-2})$

The following proposition thus follows from the identification of $\nu^{-1}(N, M)$ given in (\textbf{ID2}).

 \begin{prop}\label{homeo}
 	Let $(N, M) \in \mbox{MacP}(1,2,n)$. Then $\nu^{-1}(N, M)$ is homeomorphic to an open ball of dimension $h(M) + h_M(N)$.  
 \end{prop}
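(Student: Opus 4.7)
The plan is to prove Proposition \ref{homeo} by leveraging the identification \textbf{ID2} directly. The set $A_{(N,M)}$ is already described as the interior of a product of convex polytopes (the ordered simplex in the first factor, the ordered simplex in the second factor, and the orthant $[0,\infty)^{L_M-2}$). Since the interior of any finite-dimensional convex body is homeomorphic to an open ball of the matching dimension, and the product of open balls is itself an open ball, the only substantive thing to check is that the dimension of $A_{(N,M)}$ equals $h(M)+h_M(N)$.

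First I would recall, from Proposition \ref{prop1} and identification \textbf{ID1}, that $\dim A_M = n_1 + n_2 + (L_M-2) = h(M) = L_M + p_M - 4$, which forces $n_1 + n_2 = p_M - 2$. This identity encodes the fact that the $p_M - 2$ parallel/anti-parallel classes of $M$ not containing the basis elements $1,2$ are partitioned into $n_1$ classes with argument in $(0,\pi/2)$ and $n_2$ classes with argument in $(\pi/2,\pi)$.

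Next I would compare $A_{(N,M)}$ with $A_M$. The extra datum is the angle $\Theta = \mathrm{Arg}(v_{n+1})$, which lives in the first ordered simplex. There are two cases depending on $h_M(N)$, and since $M$ has rank $2$ and $N$ has rank $1$, the interval $[\hat 0, M]$ in $G(1,M)$ has length at most $2$, so $h_M(N)\in\{0,1\}$. If $h_M(N)=1$, the cocircuit $N$ is in general position relative to the arrangement for $M$, so $\Theta$ is forced to be strictly between two consecutive $\theta_{l_i}$'s (or strictly less than $\theta_{l_1}$, or strictly greater than $\theta_{l_{n_1}}$); this adds $\Theta$ as a genuinely new coordinate and increases the dimension of the first factor by $1$. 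If $h_M(N)=0$, then $N$ is determined by a covector of $M$ that is not a cocircuit, so $\Theta$ must coincide with one of the existing $\theta_{l_i}$ (or with $0$ or $\pi/2$); this adds no new free coordinate, and the first factor's dimension is unchanged.

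In either case the dimension of $A_{(N,M)}$ equals $n_1 + n_2 + (L_M - 2) + h_M(N) = h(M) + h_M(N)$. The main conceptual step is the case analysis showing that the inserted variable $\Theta$ contributes exactly $h_M(N)$ degrees of freedom, which itself just unpacks what it means for $v_{n+1}$ to be parallel/anti-parallel to some $v_i$ versus lying strictly between two of them. After that, the homeomorphism to an open ball is automatic from the convex-geometric description of $A_{(N,M)}$.
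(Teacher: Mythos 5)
Your approach is the same as the paper's: both read Proposition~\ref{homeo} off of the identification \textbf{ID2}, with the only work being the dimension count (the paper simply says the proposition ``follows from the identification'' and leaves that count implicit, so your version is a useful elaboration). Your dimension analysis is correct in substance, but you have the cocircuit/non-cocircuit labels swapped. In $G(1,M)$, a rank $1$ strong map image $N$ is \emph{minimal} (so $h_M(N)=0$) precisely when its nonzero covector is a \emph{cocircuit} of $M$; equivalently $\alpha$ is perpendicular to some $v_i$, hence $v_{n+1}$ is parallel/anti-parallel to $v_i$ and $\Theta$ is tied to $\theta_i$ with no new degree of freedom. Conversely $h_M(N)=1$ occurs when the covector has full support on non-loops, i.e.\ is \emph{not} a cocircuit; then $\alpha$ is generic, $v_{n+1}$ lies strictly between consecutive $v_i$'s, and $\Theta$ is a genuinely new coordinate. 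You stated these two cases with the cocircuit/non-cocircuit names reversed, but since you also attached the degree-of-freedom conclusions in the reversed order, the two slips cancel and your final count $\dim A_{(N,M)} = n_1+n_2+(L_M-2)+h_M(N) = h(M)+h_M(N)$ is correct. The remaining step --- that the interior of a product of full-dimensional convex polyhedra is an open ball --- is standard, so the argument is sound.
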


 \begin{lemma} (\cite{And:Jim})\label{max}
 	Let $M_1, M_2 \in \mbox{MacP}(2,n)$ and suppose that $M_1 < M_2$. Let $z_2 \in \mathcal{V}^*(M_2) \setminus \{0\}$. Then the set $A = \{z \in \mathcal{V}^*(M_1) \setminus \{0\} : z \leq z_2 \}$ has a maximal element. 
 \end{lemma}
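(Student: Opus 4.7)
Since $\mathcal{V}^*(M_1)$ is finite, any nonempty subset of it has maximal elements, so the lemma reduces to showing that $A$ is nonempty. My plan is to exhibit a nonzero covector $z \in \mathcal{V}^*(M_1)$ with $z \leq z_2$ by induction along a maximal chain $M_1 = N_0 \lessdot N_1 \lessdot \cdots \lessdot N_k = M_2$ in $\mathrm{MacP}(2,n)$, constructing at each step a nonzero covector of $N_i$ beneath a given nonzero covector of $N_{i+1}$ and then composing the resulting inequalities.

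For a single covering $N \lessdot N'$ I would invoke the dichotomy of Proposition \ref{cor1}. In case CR1, where a single element $i$ with $|P_{N'}(i)|\geq 2$ becomes a loop in $N$ and the chirotope is otherwise preserved, I would realize $N'$ by vectors $v_1,\ldots,v_n$ in $\mathbb{R}^2$ and realize $N$ by the same vectors with $v_i$ replaced by $0$. The linear functional $\alpha$ producing $z'$ on $N'$ then also produces a covector $z$ on $N$ that agrees with $z'$ outside $i$ and vanishes at $i$, so $z \leq z'$. For nonvanishing, if the support of $z'$ were contained in $\{i\}$, then any parallel mate $j \in P_{N'}(i)\setminus\{i\}$ would force $z'(j) = \pm z'(i) \neq 0$, contradicting the support assumption; hence $z'$ has a nonzero entry at some $j\neq i$, which remains a non-loop of $N$ and persists in $z$.

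In case CR2, two parallel classes $P_1, P_2$ of $N'$ merge into a single class in $N$. I would realize $N$ by keeping the vectors of $N'$ on $[n]\setminus(P_1\cup P_2)$ and collapsing $P_1 \cup P_2$ onto a common line $\mathbb{R}\cdot d$, with the orientation prescribed by the covering. The covector $z$ is then constructed from a functional $\alpha'$ chosen according to whether the signs of $z'$ on $P_1$ and $P_2$ are compatible with the merger of parallel/anti-parallel signs in $N$: in the compatible subcase, $\alpha'$ is taken close to but off the perpendicular of $d$ so that $z$ agrees with $z'$ on $P_1 \cup P_2$ and on the complement; in the incompatible subcase, $\alpha'$ is taken perpendicular to $d$, forcing $z$ to vanish identically on $P_1 \cup P_2$, and the nonvanishing of $z$ must then be extracted from a nonzero entry of $z'$ on $[n]\setminus(P_1\cup P_2)$.

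The step I anticipate as the main obstacle is the incompatible subcase of CR2, where I need to exploit the freedom in the realization of $N$ (the direction $d$ may be varied within the open arc strictly between the $P_1$ and $P_2$ directions in $N'$) to ensure that the perpendicular $\alpha'$ produces signs on $[n]\setminus(P_1\cup P_2)$ that are componentwise $\leq z'$. Since the support of a nonzero covector in a rank-$2$ oriented matroid is always the complement of a single parallel class together with the loops, the argument will hinge on a careful analysis of the support pattern of $z'$: when the support is essentially confined to $P_1\cup P_2$, I expect the incompatible subcase not to arise, and when the support is larger, a choice of $\alpha'$ orthogonal to $d$ and aligned with the cocircuit of $z'$ on the complement will do the job.
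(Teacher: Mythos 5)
The paper cites this lemma from \cite{And:Jim} rather than proving it, so there is no in-paper proof to compare against, but your proposal has one unacknowledged gap and one acknowledged gap that is in fact resolvable.

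First, the reduction to nonemptiness is only valid for the literal reading ``has a maximal element.'' Look at the Notation immediately following the lemma, which speaks of \emph{the} maximal covector $z_2^*$, and at Proposition~\ref{rec1}, which writes $z_t^* = \max\{w \in \mathcal{V}^*(A_t)\setminus 0: w\leq z\}$: the paper uses a unique greatest element of $A$, not merely some maximal one. That uniqueness is not automatic from finiteness, so your opening sentence is too quick. Fortunately the fix is short and does not require your chain induction at all for the uniqueness part: $A$ is closed under composition. If $z, z'\in A$ then $z\circ z' \in \mathcal{V}^*(M_1)$ by the covector axioms, $z\circ z' \geq z$ so it is nonzero, and since any nonzero coordinate of $z$ or $z'$ must already equal the corresponding coordinate of $z_2$, one checks $z\circ z' \leq z_2$ and $z\circ z' \geq z'$. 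Hence $A$ is directed, and a finite directed poset has a unique greatest element.

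Second, the obstacle you flag in the CR2 step does close, but you should say why. Because $P_1$ and $P_2$ are adjacent classes of $N'$ (no other non-loop direction lies in the open arc between $\theta_1$ and $\theta_2$ or its antipode), a functional $\alpha$ that is perpendicular to the direction of a third parallel class $P_3$ can never lie in the ``mixing arc'' $(\theta_1-\tfrac{\pi}{2},\theta_2-\tfrac{\pi}{2})\cup(\theta_1+\tfrac{\pi}{2},\theta_2+\tfrac{\pi}{2})$; so cocircuits $z'$ of $N'$ are always in your compatible subcase (or vanish on one of $P_1,P_2$). The genuinely incompatible subcase occurs only for topes $z'$. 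In that case taking $\alpha'$ perpendicular to $d$ on the same side as $\alpha$ does produce a cocircuit $z$ of $N$ with zero set exactly $(P_1\cup P_2)$ together with the loops, and it agrees with $z'$ on the complement because no direction $\theta_e\pm\tfrac{\pi}{2}$ separates $\alpha$ from $\alpha'$ inside that arc. Nonvanishing of $z$ is then automatic: since $N$ has rank $2$, it has at least two parallel classes, so some non-loop lies outside $P_1\cup P_2$. With these two points supplied, the induction along a maximal chain in $\mathrm{MacP}(2,n)$, together with the CR1 step (which you handled correctly), gives the result.
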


\begin{nota}
 We will denote by $z_2^*$ the maximal non-zero covector in $\mathcal{V}^*(M_1)$ obtained above.
 \end{nota}

The following proposition follows as in Proposition \ref{prop2} applied to elements  $M' \in \mathrm{MacP}(2,n+1)$, and by the identification of the interval  $\mathrm{MacP}(1,2,n)_{\geq (N_0, M_0)}$ with the interval $\mathrm{MacP}(2, n+1)_{\geq M_0'}$ in (\textbf{SUBP}).
 
 \begin{prop}\label{bdry1}
 	For any $(N_0, M_0) \in \mbox{MacP}(1,2,n)$, we have
 	$$\partial\overline{(\nu^{-1}(N_0,M_0))} = \bigcup_{(N, M) < (N_0,M_0)} \nu^{-1}(N,M).$$  
 \end{prop}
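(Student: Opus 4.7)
The strategy is to reduce Proposition~\ref{bdry1} to Proposition~\ref{prop2} applied in $\mathrm{Gr}(2, \mathbb{R}^{n+1})$, using the embedding $\varphi$ and the order-isomorphism (\textbf{SUBP}) between $\mathrm{MacP}(1,2,n)_{\geq (N_0, M_0)}$ and $\mathrm{MacP}(2, n+1)_{\geq M_0'}$, where $M_0' := \iota(N_0, M_0)$. Under (\textbf{SUBP}), strict inequalities are preserved; in particular, the pairs $(N, M) < (N_0, M_0)$ correspond bijectively with the matroids $M' < M_0'$ in $\mathrm{MacP}(2, n+1)$.

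For the ``easy'' inclusion $\partial\overline{\nu^{-1}(N_0, M_0)} \subseteq \bigcup_{(N, M) < (N_0, M_0)} \nu^{-1}(N, M)$, I mimic the first half of the proof of Proposition~\ref{prop2}: if $(Y_k, X_k) \to (Y, X)$ is a sequence in $\nu^{-1}(N_0, M_0)$, then continuity of the sign-vector assignment forces $\mu(X) \leq M_0$ and $\mu(Y) \leq N_0$, so $\nu(Y, X) \leq (N_0, M_0)$. A boundary point cannot lie in the open stratum $\nu^{-1}(N_0, M_0)$, so the inequality is strict and $(Y, X) \in \nu^{-1}(N, M)$ for some $(N, M) < (N_0, M_0)$.

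For the reverse inclusion, fix $(N, M) < (N_0, M_0)$ and $(Y_1, X_1) \in \nu^{-1}(N, M)$. By (\textbf{SUBP}), $\iota(N, M) < M_0'$, and $\varphi(Y_1, X_1) \in \mu^{-1}(\iota(N, M))$. Proposition~\ref{prop2}, applied to $\mathrm{Gr}(2, \mathbb{R}^{n+1})$, therefore produces a sequence $X_k' \in \mu^{-1}(M_0')$ with $X_k' \to \varphi(Y_1, X_1)$. The perturbation recipes of Lemma~\ref{le} --- inserting a small nonzero vector for each loop in the \textbf{CR1} case, or rotating vectors within a parallel class by a small angle in the \textbf{CR2} case --- change only vectors that were already present in the realization, and hence can be chosen to preserve the $(n+1)$-th coordinate as a unit vector with argument in $[0, \pi)$. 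Consequently each $X_k'$ lies in the image of $\varphi$, and $(Y_k, X_k) := \varphi^{-1}(X_k') \in \nu^{-1}(N_0, M_0)$ converges to $(Y_1, X_1)$, proving $(Y_1, X_1) \in \partial\overline{\nu^{-1}(N_0, M_0)}$.

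The principal technical obstacle is to verify that the approximating sequences from Proposition~\ref{prop2} can indeed be selected inside $\mathrm{Im}(\varphi)$, i.e.\ with $v_{n+1}$ normalized to unit length and with argument in $[0, \pi)$. This reduces to a case-by-case inspection of \textbf{CR1} and \textbf{CR2} from Proposition~\ref{cor1}: one checks that the perturbation either leaves $v_{n+1}$ untouched or else can be composed with a harmless rescaling/rotation of $v_{n+1}$ that does not change the rank-$1$ strong map image $N$. Modulo this bookkeeping, the statement is a direct transport of Proposition~\ref{prop2} via $\varphi$ and (\textbf{SUBP}), exactly as the paper's hint suggests.
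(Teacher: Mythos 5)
Your strategy --- transferring the problem to $\mathrm{Gr}(2,\mathbb{R}^{n+1})$ via $\varphi$ and (\textbf{SUBP}) and invoking Proposition~\ref{prop2} there --- is exactly what the paper's one-sentence proof sketches. Two points of the fill-in, however, need tightening.

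First, the claim that (\textbf{SUBP}) gives a bijection between $\{(N,M) < (N_0,M_0)\}$ and $\{M' < M_0'\}$ is not right: (\textbf{SUBP}) only identifies the up-intervals $\mathrm{MacP}(1,2,n)_{\geq(N_0,M_0)}$ and $\mathrm{MacP}(2,n+1)_{\geq M_0'}$, and the analogous statement for down-intervals fails (any $M' < M_0'$ in which $n+1$ is a loop has no preimage under $\iota$, since $\iota$ always produces a matroid in which $n+1$ is a nonloop). Moreover $\iota(N,M)$ and $\varphi$ on $\nu^{-1}(N,M)$ are only defined when $\{1,2\}$ is a basis of $M$, which need not hold for $M < M_0$. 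The remedy is to re-base the identification at the lower flag: given $(N,M) < (N_0,M_0)$, relabel so that $\{1,2\}$ is a basis of $M$ (hence also of $M_0$, since bases of $M$ remain bases of any $M_0 \geq M$); then $(N_0,M_0)$ lies in $\mathrm{MacP}(1,2,n)_{\geq(N,M)}$, one has $\iota(N,M) < \iota(N_0,M_0)$, and Proposition~\ref{prop2} in $\mathrm{MacP}(2,n+1)$ applies.

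Second, the ``principal technical obstacle'' of keeping the approximating sequence inside $\mathrm{Im}(\varphi)$ can be bypassed rather than resolved by the case analysis you defer. The inverse of $\varphi$ extends to a continuous map on all subspaces $\mathrm{Rowspace}(e_1\; e_2\; v_3\cdots v_{n+1})$ with $v_{n+1}\neq 0$ and $\{1,2\}$ a basis of the restriction to $[n]$, sending such a subspace to the flag $\bigl(\mathrm{Rowspace}(\alpha),\, \mathrm{Rowspace}(e_1\; e_2\; v_3\cdots v_n)\bigr)$ with $\alpha \perp v_{n+1}$; the normalisation $\|v_{n+1}\|=1$ plays no role in this formula. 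Since $n+1$ is a nonloop of $\iota(N_0,M_0)$ by construction, every term $X_k'$ of the sequence supplied by Proposition~\ref{prop2} lies in the domain of this extension, and by the paper's observation that the rank-$1$ factor is read off from $\pm\chi'_{n+1}$, the extension carries $\mu^{-1}(\iota(N_0,M_0))$ into $\nu^{-1}(N_0,M_0)$. Applying it termwise yields flags in $\nu^{-1}(N_0,M_0)$ converging to $(Y_1,X_1)$, with no bookkeeping about the norm of $v_{n+1}$ required.
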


\begin{thm}
	$\{\nu^{-1}(N,M) : (N, M) \in \mathrm{MacP}(1,2,n) \}$ forms a regular cell decomposition for $Gr(1,2,\mathbb{R}^n)$, where  $\nu : Gr(1,2, \mathbb{R}^n) \rightarrow \mbox{MacP}(1,2,n)$ is the map described in section 1.
\end{thm}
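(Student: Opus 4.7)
The plan is to mirror the strategy used for Theorem \ref{main}, but to bootstrap all of the combinatorial input via the embedding \textbf{SUBP} that identifies $\mathrm{MacP}(1,2,n)_{\geq (N_0, M_0)}$ with $\mathrm{MacP}(2, n+1)_{\geq \iota(N_0, M_0)}$. Concretely, I will prove by induction on the rank function $h(M) + h_M(N) = \dim \nu^{-1}(N,M)$ that each $\overline{\nu^{-1}(N,M)}$ is a closed topological ball of the appropriate dimension whose boundary is $\bigcup_{(N',M')<(N,M)} \nu^{-1}(N',M')$, exactly as in the proof of Theorem \ref{cball}.

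First I would handle the combinatorics of the closed interval $\mathrm{MacP}(1,2,n)_{\leq (N,M)} \cup \{\hat 0\}$. Reorientation and relabeling reduce to the case where $\{1,2\}$ is a basis of $M$, so \textbf{SUBP} gives a poset isomorphism from $\mathrm{MacP}(1,2,n)_{\leq (N,M)}$ to $\mathrm{MacP}(2,n+1)_{\leq \iota(N,M)}$ sending covers to covers. Proposition \ref{rao} and Lemma \ref{thin} applied to $\iota(N,M)\in\mathrm{MacP}(2,n+1)$ then transfer across, producing a recursive atom ordering and the thin property for $\mathrm{MacP}(1,2,n)_{\leq (N,M)} \cup \{\hat 0\}$. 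By Theorem \ref{bjorner}, this interval is the augmented face poset of a regular cell decomposition of a PL sphere of dimension $h(M)+h_M(N)-1$. Combined with Proposition \ref{bdry1} and the inductive hypothesis (which says the closures indexed by $(N',M')<(N,M)$ are closed balls glued along lower-dimensional balls), this identifies $\partial\overline{\nu^{-1}(N,M)}$ as a regular cell complex homeomorphic to a sphere of dimension $h(M)+h_M(N)-1$.

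Next I would prove the analog of Theorem \ref{top}: that $\overline{\nu^{-1}(N,M)}$ is a topological manifold with boundary. Using identification \textbf{(ID2)}, I construct an ambient closed product ball
\[
B_{(N,M)} = \bigl\{(\theta_{l_i}):\,0\le\theta_{l_1}\le\cdots\le\Theta\le\cdots\le\theta_{l_{n_1}}\le\tfrac{\pi}{2}\bigr\}
\times\bigl\{(\theta_{j_i}):\,\tfrac{\pi}{2}\le\theta_{j_1}\le\cdots\le\theta_{j_{n_2}}\le\pi\bigr\}\times\prod_j[\tfrac{r_j}{2},r_j+1],
\]
with the marker coordinate $\Theta$ encoding the position of the flag direction, and the obvious surjection $T\colon B_{(N,M)}\to\overline{\nu^{-1}(N,M)}$. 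As in Theorem \ref{top}, $T$ fails to be injective only on the boundary faces where some $r_j$ or some gap in the angular coordinates collapses; on those faces I apply Hersh's parallel-like cell collapses (Theorem \ref{hersh:thm}) to obtain an identification $\sim$ on $\partial B_{(N,M)}$ under which $T$ factors through a homeomorphism $B_{(N,M)}/{\sim}\to \overline{\nu^{-1}(N,M)}$ (compactness plus Hausdorff). This provides, around every boundary point, a closed ball neighborhood of the correct dimension, so $\overline{\nu^{-1}(N,M)}$ is a topological manifold with boundary $\bigcup_{(N',M')<(N,M)}\nu^{-1}(N',M')$.

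Finally, Proposition \ref{homeo} gives that the interior $\nu^{-1}(N,M)$ is an open ball, the previous step gives that the boundary is a sphere and that the closure is a topological manifold, so Proposition \ref{ball} yields that $\overline{\nu^{-1}(N,M)}$ is a closed ball. Assembling over all $(N,M)$ then shows $\{\nu^{-1}(N,M)\}$ is a regular cell decomposition of $\mathrm{Gr}(1,2,\mathbb{R}^n)$. The main obstacle I expect is verifying that the collapsing pattern needed in the manifold step is the same one produced by the embedding $\iota$ — that is, that the cell collapses on $B_{(N,M)}$ needed to fix up non-injectivity of $T$ match, via the extra coordinate $\Theta$, the collapses used in the Grassmannian argument for $\iota(N,M)\in\mathrm{MacP}(2,n+1)$; once this compatibility is set up cleanly, the remainder of the induction is essentially a transcription of Section \ref{sec:top_1}.
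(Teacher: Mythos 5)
Your overall strategy matches the paper's: establish that each $\overline{\nu^{-1}(N,M)}$ is a closed topological ball by combining (a) shellability of $\mathrm{MacP}(1,2,n)_{\leq (N,M)}\cup\{\hat 0\}$ (via recursive atom ordering and thinness and Theorem \ref{bjorner}), (b) the fact that $\overline{\nu^{-1}(N,M)}$ is a topological manifold with boundary built from a product ball $B_{(N,M)}$ and Hersh-style parallel-like collapses, and (c) Proposition \ref{ball} together with induction (Theorem \ref{tops}). Steps (b) and (c) are essentially a transcription of Section \ref{sec:top_1}, as you say, and match Theorem \ref{top1} and Theorem \ref{tops} in the paper.

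However, there is a genuine gap in step (a). You assert that \textbf{SUBP} gives ``a poset isomorphism from $\mathrm{MacP}(1,2,n)_{\leq(N,M)}$ to $\mathrm{MacP}(2,n+1)_{\leq\iota(N,M)}$.'' It does not. \textbf{SUBP} identifies \emph{up-intervals}: $\mathrm{MacP}(1,2,n)_{\geq(N_0,M_0)}\cong\mathrm{MacP}(2,n+1)_{\geq\iota(N_0,M_0)}$. The map $\iota$ is a poset embedding but is far from surjective onto the down-interval $\mathrm{MacP}(2,n+1)_{\leq\iota(N,M)}$: any $M''\leq\iota(N,M)$ in which the element $n+1$ has degenerated to a loop, or in which $\{1,2\}$ is no longer a basis, is in that down-interval but not in the image of $\iota$. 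Worse, the images under $\iota$ of the atoms of $\mathrm{MacP}(1,2,n)_{\leq(N,M)}\cup\{\hat 0\}$ are typically not atoms of $\mathrm{MacP}(2,n+1)_{\leq\iota(N,M)}\cup\{\hat 0\}$ (adjoining $v_{n+1}$ generally adds a second basis), so the two posets do not even have matching atom sets, and a recursive atom ordering cannot simply be transported across. The paper uses \textbf{SUBP} only for properties of \emph{bounded} intervals, which do sit inside an up-set: total semimodularity (Lemma \ref{lemod}) and thinness away from $\hat 0$ (Lemma \ref{thin1}) transfer this way. The recursive atom ordering itself is proved directly in Proposition \ref{rec1} by splitting each atom $A_t=(i,j)$ of $\mathrm{MacP}(2,n)_{\leq M}\cup\{\hat 0\}$ into one or two flag atoms $(N_t^0,A_t),(N_t^1,A_t)$ and ordering them according to the position of the added direction $v_{n+1}$ (i.e.\ the label $L_z$) relative to the affine line $L$ from Proposition \ref{rao}. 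If you replace your ``transfer via SUBP'' step with a direct construction of this form, the rest of your outline goes through.
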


\section{Shellability of interval $\mathrm{MacP}(1,2,n)_{\leq (\pm z, M)}$}\label{sec:shell_2}

We prove the analogue of Lemma \ref{lem} here. We again use the identification of  $\mathrm{MacP}(1,2,n)_{(N_0, M_0)}$ with the interval $\mathrm{MacP}(2, n+1)_{\geq M_0'}$. An interval $[(N_1, M_1), (N_2, M_2)]$ in $\mathrm{MacP}(1,2,n)_{\geq (N_0, M_0)}$ can be identified with the interval $[M_1', M_2']$ in $\mathrm{MacP}(2, n+1)_{\geq M_0'}$ where $M_1', M_2'$ are the rank $2$ oriented matroid on $n+1$ elements corresponding to $M_1$ and $M_2$ respectively in \textbf{SUBP}.

\begin{lemma}\label{lemod}
Let $[(N_1, M_1), (N_2, M_2)]$ be an interval in $\mathrm{MacP}(1,2,n)$. Then the interval $[(N_1, M_1), (N_2, M_2)]$ is totally semimodular. 
\end{lemma}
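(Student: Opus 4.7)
The plan is to reduce the statement directly to Lemma \ref{lem} by exploiting the poset identification (SUBP). First, by relabeling elements (which preserves the poset structure of every upper set and every interval), I may assume without loss of generality that $\{1,2\}$ is a basis of $M_1$. Since $M_1 \leq M_2$ in the weak map order on rank $2$ oriented matroids, every basis of $M_1$ is automatically a basis of $M_2$, so $\{1,2\}$ is a basis of $M_2$ as well. In particular, the pair $(N_1, M_1)$ satisfies the hypothesis needed to apply (SUBP).

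Next I apply (SUBP) to the pair $(N_1, M_1)$: the upper interval $\mathrm{MacP}(1,2,n)_{\geq (N_1, M_1)}$ is identified as a poset with $\mathrm{MacP}(2, n+1)_{\geq \iota(N_1, M_1)}$. Under this identification, each pair $(N, M)$ in the interval $[(N_1, M_1), (N_2, M_2)]$ corresponds to a rank $2$ oriented matroid $\iota(N, M)$ on $n+1$ elements, and the interval $[(N_1, M_1), (N_2, M_2)]$ maps onto the interval $[\iota(N_1, M_1), \iota(N_2, M_2)]$ in $\mathrm{MacP}(2, n+1)$.

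Finally, Lemma \ref{lem} applied in $\mathrm{MacP}(2, n+1)$ tells us that $[\iota(N_1, M_1), \iota(N_2, M_2)]$ is totally semimodular. Since total semimodularity is a purely poset-theoretic property, it is preserved under the poset isomorphism from (SUBP), and therefore $[(N_1, M_1), (N_2, M_2)]$ is itself totally semimodular.

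The conceptual work has already been done in the rank $2$ case (Lemma \ref{lem}) and in setting up the embedding $\iota$ (the proposition preceding (SUBP)), so the reduction is essentially immediate. The only point that might require explicit verification is that $\iota$ respects the covering relations of the poset in a way compatible with the characterization of covers in Proposition \ref{cor1}; this follows directly from the description of $N$ as $\pm \chi'_{n+1}$ given just before the second proposition on the embedding, since altering or merging parallel classes of $M$ together with choosing the corresponding vector $v_{n+1}$ produces exactly the two types of covers \textbf{CR1} and \textbf{CR2} in the enlarged rank $2$ MacPhersonian.
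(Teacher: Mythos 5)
Your proof is correct and follows exactly the paper's intended route: the paper states in the paragraph preceding Lemma \ref{lemod} that the interval $[(N_1, M_1), (N_2, M_2)]$ is to be identified via \textbf{SUBP} with an interval $[M_1', M_2']$ in $\mathrm{MacP}(2, n+1)$, and then Lemma \ref{lem} is applied there. You have in fact spelled out the minor details the paper elides (relabeling so that $\{1,2\}$ is a basis, observing that bases of $M_1$ are bases of $M_2$, and noting that total semimodularity is preserved under poset isomorphism), so this is a slightly more explicit version of the same argument.
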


\begin{prop}\label{rec1}
	 Let $(N ,M) \in Gr(1,2,\mathbb{R}^n)$. Then the interval $\mathrm{MacP}(1,2,n)_{\leq (N, M)} \cup \{\hat{0}\}$ has a recursive atom ordering.  
\end{prop}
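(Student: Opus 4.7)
The plan is to mirror the proof of Proposition \ref{rao}, using Lemma \ref{lemod} to dispose of condition (i) of Definition \ref{raon} automatically. For any atom $a$ of $\mathrm{MacP}(1,2,n)_{\leq (N, M)} \cup \{\hat{0}\}$, Lemma \ref{lemod} tells us the interval $[a, (N, M)]$ is totally semimodular, so by Theorem \ref{bw} every atom ordering of $[a, (N, M)]$ is already a recursive atom ordering. Once a linear ordering $a_1, \ldots, a_t$ of the atoms of $\mathrm{MacP}(1,2,n)_{\leq (N, M)} \cup \{\hat{0}\}$ is chosen, I can then order the atoms of each $[a_j, (N, M)]$ so that those covering some earlier $a_i$ appear first, yielding a recursive atom ordering of the whole interval. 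The remaining task is to produce an atom-level ordering satisfying condition (ii) of Definition \ref{raon}.

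For this, I will fix a realization $(Y, X) \in \nu^{-1}(N, M)$ of the form $X = \mathrm{Rowspace}(e_1 \; e_2 \; v_3 \; \cdots \; v_n)$ (assuming without loss of generality that $\{1, 2\}$ is a basis of $M$), let $\alpha \in \mathbb{R}^2$ be the direction of $Y$, and set $v_{n+1}$ to be the unit vector with $\mathrm{Arg}(v_{n+1}) \in [0, \pi)$ perpendicular to $\alpha$; this yields a vector realization of $\iota(N, M) \in \mathrm{MacP}(2, n+1)$ as in the construction preceding Proposition \ref{homeo}. I will then pick an affine line $L \subset \mathbb{R}^2$ not parallel to any nonzero $v_i$ ($i \in [n+1]$) and label the non-loops of $\iota(N, M)$ by $1, 2, \ldots$ in the order their spans cross $L$, exactly as in Proposition \ref{rao}. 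An atom of $\mathrm{MacP}(1,2,n)_{\leq (N, M)} \cup \{\hat{0}\}$ is a flag $(N', M')$ whose $M'$ has a unique basis $\{i, j\} \subseteq [n]$ (necessarily a basis of $M$) and $\mathcal{V}^*(N') = \{0, \pm e_k\}$ for some $k \in \{i, j\}$ which is a non-loop of $N$; I will encode such an atom as the ordered pair $(k, \ell)$, with $\ell = \{i, j\} \setminus \{k\}$, read in the $L$-labels, and order atoms by the dictionary order on these pairs.

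Condition (ii) is then verified by the same case split as in Proposition \ref{rao}. If two atoms encoded by $(k_r, \ell_r) < (k_s, \ell_s)$ both sit below $y = (N^*, M^*)$, I distinguish the subcase $k_r = k_s$, analogous to Case 1 of Proposition \ref{rao}, from the subcase $k_r < k_s$, analogous to Case 2. In each subcase I pick the largest $L$-label strictly between the two coordinates under consideration which is a non-loop of $M^*$ (and additionally a non-loop of $N^*$ when it will serve as a $k$-coordinate), and glue it to $a_s$ by a parallel/anti-parallel relation as in Proposition \ref{rao} to produce a cover $z \leq y$ of $a_s$. The main obstacle will be the subcase $k_r < k_s$: here, unlike in Proposition \ref{rao}, the intermediate label must be a non-loop of both $M^*$ and $N^*$. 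I plan to handle this by transporting the argument through \textbf{SUBP}, which translates the $N^*$ non-loop condition into the chirotope condition $\chi_{\iota(N^*, M^*)}(n+1, m) \neq 0$ for the intermediate label $m$; the existence of such an $m$ then reduces to the affine-line argument of Proposition \ref{rao} applied inside $\mathrm{MacP}(2, n+1)_{\leq \iota(N, M)}$.
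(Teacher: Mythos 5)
Your high-level framing (handle condition (i) via Lemma \ref{lemod} and Theorem \ref{bw}, then hand-build an atom ordering satisfying condition (ii)) matches the paper, but your proposed atom ordering is genuinely different from the paper's and your verification of condition (ii) has a concrete gap. The paper first lists the atoms by their rank-2 parts $A_1, \ldots, A_k$ using the recursive atom ordering from Proposition \ref{rao}, and then, \emph{within} each $A_t$, decides which of the one or two flag-atoms $(N_t^0, A_t)$, $(N_t^1, A_t)$ comes first by comparing the $L$-label $L_z$ of the auxiliary element $n+1$ to the two basis labels $i,j$ of $A_t$ (Figures \ref{ord1}--\ref{ord3}). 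You instead order the atoms by dictionary order on $(k,\ell)$, where $k$ is the basis index on which the rank-1 part is nonzero; this primary-sorts by $k$ rather than by $A_t$, so the two orders interleave the $A_t$-groups differently. (Also, $\mathcal{V}^*(N')$ is $\{0,\pm c\}$ for $c$ the cocircuit of $M'$ supported off $\ell$, not $\{0,\pm e_k\}$; this is a minor slip.)

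The gap appears in your subcase $k_r < k_s$. Because your encoding does not enforce $k < \ell$, the same $L$-label can occupy the $\ell$-slot of $a_s$ and land in the search window $[k_r, k_s)$, and then the Proposition~\ref{rao}-style gluing you invoke is not even well-formed. For example, take $M$ uniform on $\{1,2,3\}$ with $L$-order $1<2<3$ and $N$ the rank-1 image with tope covector $(+,+,+)$. In your dictionary order, $(1,3)$ precedes $(2,1)$, and both lie below $y=(N,M)$; here $k_r=1$, $k_s=2$, $\ell_s=1$. The only candidate $m$ in $[k_r,k_s)$ is $m=1=\ell_s$, so ``glue $m$ to $k_s$ inside $a_s$'' would produce $(mk_s, \ell_s) = (12,1)$ with $m=\ell_s$, which is degenerate. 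A valid $z$ does exist for this pair — namely the flag with $M$-part $(1,2)$ and $N$-part the tope $(+,+,0)$ — but it arises by raising the rank-1 part while keeping the rank-2 part fixed, a move your $k_r<k_s$ case does not contemplate (it corresponds to the paper's Case~1, ``$A^1=A^2$''). In the paper's ordering this problematic pair simply never occurs as $a_r<a_s$, precisely because the primary sort is by $A_t$ and the $L_z$-dependent tie-break is designed so that the Proposition~\ref{rao} cover constructions lift correctly to the flag poset. Your remark about using \textbf{SUBP} to ensure the intermediate label is a non-loop of $N^*$ is sensible but orthogonal to this issue; you would still need either to prove that your dictionary order satisfies condition (ii) by a different argument for these colliding pairs, or to adopt an ordering, like the paper's, that avoids them.
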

\begin{proof}
	 We proceed as in Proposition ~\ref{rao}. Let $(Y, X) \in \nu^{-1}(N, M)$. We fix a vector arrangement $(e_1 \; e_2 \; v_3 \; v_4 \cdots v_{n+1})$ such that $(Y, X)$ is identified with $\mathrm{Rowspace}(e_1 \; e_2 \; v_3 \; v_4 \cdots v_{n+1})$. As in Proposition \ref{rao}, we also fix an affine line $L$ not parallel to any of the vectors in the above arrangement.
	 
	 Let $A_1, A_2, \ldots A_k$ be the recursive atom ordering of atoms of $\mathrm{MacP}(2,n)_{\leq M} \cup \{\hat{0}\}$ as in Proposition ~\ref{rao}. We denote by $L_z$ the labeling on the element $n+1$ as in Figure ~\ref{rc1} . 
	 
	 \begin{figure}[htb]
	 	\centering
	 	\begin{tikzpicture}
	 	\path[->] (0,0) edge node[at end, right]{$5$} (3,1) ;
	 	\path[->] (0,0) edge node[at end, left]{$1$} (0,3) ;
	 	\path[->] (0,0) edge node[at end, below= 1mm]{$6$} (3,0) ;
	 	\path[->] (0,0) edge node[at end, right]{$4$} (3,2)  ;
	 	\path[->] (0,0) edge node[at end, right]{$2$} (3,4) ;
	 	\draw[dashed] (0,0) edge node[at end, right]{$L_z= 3$} (3, 3);
	 	\draw[dashed, ->] (0,0) edge node[at end, right]{$v_{n+1}$} (2.5, 2.5);
	 	\draw[dashed] (0,0) edge  (2,2);
	 	\path[dashed] (0, 3) edge (0,3.5);
	 	\path[dashed] (3,0) edge (3.5,0);
	 	\path[dashed] (-0.5, 4) edge (4, -0.5); 
	 	\end{tikzpicture}
	 	\caption{}
	 	\label{rc1}
	 \end{figure}

Let $A_t = (i,j)$ be an atom of $\mathrm{MacP}(2,n)_{\leq M} \cup \{\hat{0}\}$. Let $z$ be a non-zero covector of $N$. We denote by $z_t^* = \mbox{max}\{w \in \mathcal{V}^*(A_t) \setminus 0: w \leq z\}$. Let $N^*_t$ be a rank $1$ oriented matroid with covectors $\{0, \pm z_t^*\}$. Then $(N_t^*, A_t) \leq (N, M)$. 

If $z_t^*$ is not a cocircuit of $A_t$, then there are cocircuits $z_t^0$, and $z_t^1$ satisfying $z_t^0(i) = 0$ and $z_t^1(j) = 0$ respectively. Let $N_t^0$ be a rank $1$ oriented matroid with covectors $\{0, \pm z_t^0\}$, and let $N_t^1$ be a rank $1$ oriented matroid with covectors $\{0, \pm z_t^1\}$. Then $(N_t^0, A_t)$ and $(N_t^1, A_t)$ are atoms in $\mathrm{MacP}(1,2,n)_{\leq (N, M)} \cup \{\hat{0}\}$ and are covered by $(N^*_t, A_t)$. If $z_t^*$ is a cocircuit of $A_t$, then $(N^*_t, A_t)$ is an atom in $\mathrm{MacP}(1,2,n)_{\leq (N, M)} \cup \{\hat{0}$. In this case, we say that $N^*_t = N_t^0 = N_t^1$.

In the case when when $z_t^*$ is not a cocircuit of $A_t$, we decide which of the atoms $(N_t^0, A_t)$ and $(N_t^1, A_t)$ is ordered just before the other. We consider the labeling obtained from the affine line $L$ as in Figure \ref{rc1}.

If $L_z \leq  i < j$ we order $(N_t^0, A_t)$ just before $(N_t^1, A_t)$ (see Figure \ref{ord1}). If $i < L_z \leq j$ we order $(N_t^1, A_t)$ just before $(N_t^0, A_t)$ (see Figure \ref{ord2}). If $L_z > j$ we order $(N_t^1, A_t)$ just before $(N_t^0, A_t)$ (see Figure \ref{ord3}). In the ordering of atoms of $\mathrm{MacP}(1,2,n)_{\leq (N_t^*, A_t)} \cup \{\hat{0}$, we denote the atom ordered first by $(N_t, A_t)$ while the second is denoted by $(N_t', A_t)$.

We then order the atoms of $\mathrm{MacP}(1,2,n)_{\leq (N, M)}$ as $(N_1, A_1), (N_1',A_1),\\
(N_2, A_2), (N_2', A_2) , \ldots 
(N_k, A_k), (N_k, A_k)$.

For each $(N_t, A_t)$ or $(N_t', A_t)$ in the above list, the intervals $[(N_t, A_t), (N, M)]$ and $[(N_t', A_t), (N, M)]$  are totally semimodular by Lemma \ref{lemod}. So by Theorem \ref{bw} the intervals $[(N_t, A_t), (N, M)]$, $[(N_t', A_t), (N, M)]$  have recursive atom ordering; in fact, any atom ordering gives a recursive atom ordering for the interval.

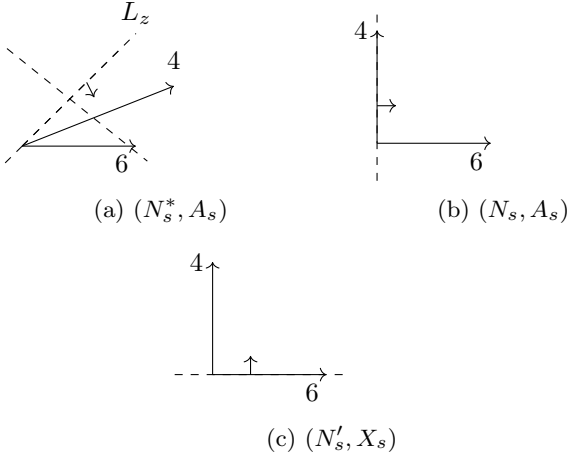
\begin{figure}[!htb]
	
	\centering
	
	\begin{subfigure}[t]{0.35\textwidth}
		\begin{tikzpicture}
		\path[->] (0,0) edge node[at end, above=1mm]{$4$} (2.0,0.8) ;
		\path[->] (0,0) edge node[very near end, below]{$6$} (1.5,0) ;
		\draw[dashed] (0,0) edge node[at end, above]{$L_z$} (1.5, 1.5);
		\draw[dashed] (0,0) edge node[at end ,above = 2mm]{} (1.3,1.3);
		\draw[dashed] (0,0) edge (-0.25, -0.25);
		\draw[dashed] (-0.2, 1.3) edge (1.65, -0.2);
		\path[->] (0.85, 0.85) edge (0.95, 0.65);
		\end{tikzpicture}
		\caption{$(N_s^*, A_s)$}
	\end{subfigure}
	~
	\begin{subfigure}[t]{0.35\textwidth}
		\begin{tikzpicture}
		\path[->] (0,0) edge node[at end, left]{$4$} (0,1.5) ;
		\path[->] (0,0) edge node[very near end, below]{$6$} (1.5,0) ;
		\draw[dashed] (0,-0.5) edge node[very near end, above]{} (0,1.80);
		\path[->] (0,0.5) edge (0.25, 0.5);
		\end{tikzpicture}
		\caption{$(N_s, A_s)$}
	\end{subfigure}
	~
	\begin{subfigure}[t]{0.35\textwidth}
		\begin{tikzpicture}
		\path[->] (0,0) edge node[at end, left]{$4$} (0,1.5) ;
		\path[->] (0,0) edge node[very near end, below]{$6$} (1.5,0) ;
		\draw[dashed] (-0.5,0) edge node[at end, above]{} (1.75,0);
		\path[->] (0.5,0) edge (0.5, 0.25);
		\end{tikzpicture}
		\caption{$(N_s', X_s)$}
	\end{subfigure}
	\caption{$L_z \leq  L_4 < L_6$}
	\label{ord1}
\end{figure}
  
 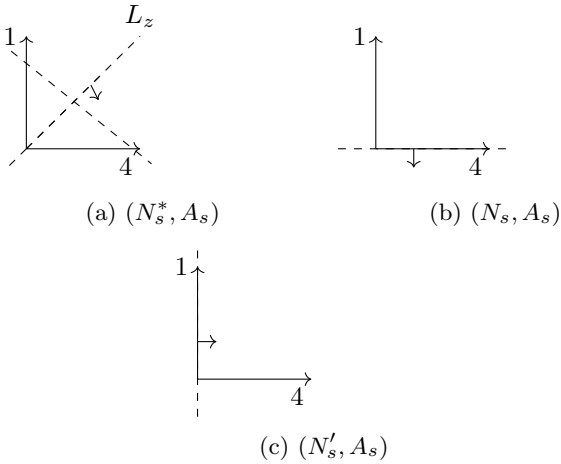
\begin{figure}[htb]
 	
 	\centering
\begin{subfigure}[t]{0.35\textwidth}
 	\begin{tikzpicture}
 	\path[->] (0,0) edge node[at end, left]{$1$} (0,1.5) ;
 	\path[->] (0,0) edge node[very near end, below]{$4$} (1.5,0) ;
 	\draw[dashed] (0,0) edge node[at end, above]{$L_z$} (1.5, 1.5);
 	\draw[dashed] (0,0) edge  (1,1);
 	\draw[dashed] (0,0) edge (-0.25, -0.25);
 	\draw[dashed] (-0.2, 1.3) edge (1.65, -0.2);
 	\path[->] (0.85, 0.85) edge (0.95, 0.65);
 	\end{tikzpicture}
 	\caption{$(N^*_s, A_s)$}
\end{subfigure}
~
\begin{subfigure}[t]{0.35\textwidth}
 	\begin{tikzpicture}
 	
 	\path[->] (0,0) edge node[at end, left]{$1$} (0,1.5) ;
 	\path[->] (0,0) edge node[very near end, below]{$4$} (1.5,0) ;
 	\draw[dashed] (-0.5,0) edge node[at end, above]{} (1.75,0);
 	\path[->] (0.5,0) edge (0.5, -0.25);
 	\end{tikzpicture}
 	\caption{$(N_s, A_s)$}
 	\end{subfigure}
 ~
 \begin{subfigure}[t]{0.35\textwidth}
 	\begin{tikzpicture}
 	\path[->] (0,0) edge node[at end, left]{$1$} (0,1.5) ;
 	\path[->] (0,0) edge node[very near end, below]{$4$} (1.5,0) ;
 	\draw[dashed] (0,-0.5) edge node[very near end, above]{} (0,1.75);
 	\path[->] (0,0.5) edge (0.25, 0.5);
 	\end{tikzpicture}
 	\caption{$(N_s', A_s)$}
 	\end{subfigure}
	\caption{$L_1 < L_z \leq L_4 $}
	\label{ord2}
 \end{figure}

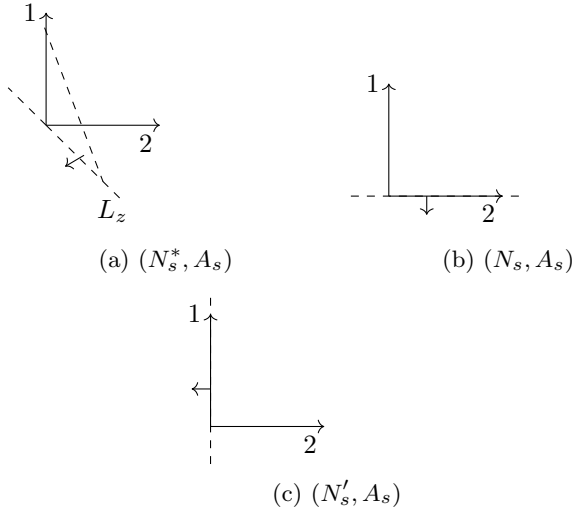
\begin{figure}[htb]
	
	\centering
	\begin{subfigure}[t]{0.35\textwidth}
		\begin{tikzpicture}
		\path[->] (0,0) edge node[at end, left]{$1$} (0,1.5) ;
		\path[->] (0,0) edge node[very near end, below]{$2$} (1.5,0) ;
		\draw[dashed] (0,0) edge node[very near end, below]{$L_z$} (1.0, -1.0);
		\draw[dashed] (0,0) edge (-0.5, 0.5);
		\draw[dashed] (-0.02, 1.3) edge (0.75, -0.75);
		\path[->] (0.50, -0.40) edge (0.25, -0.55);
		\end{tikzpicture}
		\caption{$(N^*_s, A_s)$}
		\end{subfigure}
		~
\begin{subfigure}[t]{0.35\textwidth}
	\begin{tikzpicture}
	\path[->] (0,0) edge node[at end, left]{$1$} (0,1.5) ;
	\path[->] (0,0) edge node[very near end, below]{$2$} (1.5,0) ;
	\draw[dashed] (-0.5, 0) edge node[very near end, above]{} (1.75, 0);
	\path[->] (0.5,0) edge (0.5, -0.25);
	\end{tikzpicture}
	\caption{$(N_s, A_s)$}
\end{subfigure}
~
	\begin{subfigure}[t]{0.35\textwidth}
		\begin{tikzpicture}
		\path[->] (0,0) edge node[at end, left]{$1$} (0,1.5) ;
		\path[->] (0,0) edge node[very near end, below]{$2$} (1.5,0) ;
		\draw[dashed] (0, -0.5) edge node[very near end, above]{} (0, 1.75);
		\path[->] (0, 0.5) edge (-0.25, 0.5);
		\end{tikzpicture}
		\caption{$(N'_s, A_s)$}
	\end{subfigure}
	\caption{$L_1 < L_2 < L_z $}
	\label{ord3}
\end{figure}

Let $(N_1, A_1), (N_1', A_1), (N_2, A_2), (N_2', A_2) \ldots, (N_k, A_k), (A_k', A_k)$ be the ordering of atoms in the interval $\mathrm{MacP}(1,2,n)_{\leq (N,M)} \cup \{\hat{0}\}$. We will now verify the second condition in Definition ~\ref{rao}. 

Let $(W, T) \in \mathrm{MacP}(1,2,n)_{\leq (N, M)}$, and $(N^1, A^1), (N^2, A^2)$ atoms in $\mathrm{MacP}(1,2,n)_{\leq (N,M)} \cup \{\hat{0}\}$. Let $A^1 = (i_1, j_1)$ and $A^2 = (i_2, j_2)$. Suppose $(N^1, A^1)$ precedes $(N^2, A^2)$ in the ordering of atoms in $\mathrm{MacP}(1,2,n)$, and $(N^1, A^1), (N^2, A^2) < (W, T)$. 

We recall from Proposition \ref{rao} that  $(i, jk)$  denote a rank $2$ oriented matroid covering $(i,j)$, $(i,k)$, and $j$ is parallel/anti-parallel to $k$. Similarly, we have the notation $(ik, j)$ . We will then consider the following cases:

\textbf{Case 1:} Suppose $A^1 = A^2 = (i_2, j_2)$ so that $N^1 \neq N^2$. Let $w$ be a non-zero covector of $W$. We denote by $w^* = \max\{c\in \mathcal{V}^*(A^2)\setminus 0 : c \leq w\}$. Let $Z$ be a rank $1$ oriented matroid with covectors $\{0, \pm w^*\}$. Then $(Z, A^2) \leq (W, T)$ and covers both $(N^1, A^2)$ and $(N^2, A^2)$.

\textbf{Case 2:} Suppose $A^1 \neq A^2$ so that $A^1$ precedes $A^2$ in atom ordering in $\mathrm{MacP}(2,n)_{\leq M} \cup \{\hat{0}$. Suppose $i_1 = i_2$ so that $j_1 < j_2$. We obtained in Proposition ~\ref{rao} the rank 2 oriented matroid $L = (i_2, jj_2)$ such that $L \leq T$ and $L$ covers $A^2$ and $L' = (i_2, j)$ for some $j$ satisfying $j_1 \leq j < j_2$. 

Suppose $L_z \leq  i_2$ as in Figure \ref{ord1}. Let $w$ be a cocircuit of $L$ satisying $w(i_2) = 0$.  Let $w_1^* = \max\{c\in \mathcal{V}^*(L')\setminus 0 : c \leq w\}$. Let $Z, Z_1$ be rank $1$ oriented matroids with covectors $\{0, \pm w\} \; \mathrm{and} \; \{0, \pm w_1^*\}$ respectively. Then $(Z, L) \leq (W, T)$ and $(Z, L)$ covers $(Z_1, L')$ and $(N^2, A^2)$.

Suppose $ i_2 < L_z \leq j_2$ or $L_z \geq j_2$ . Let $w$ be a cocircuit of $L$ with $w(j) = w(j_2) = 0$. We obtain $Z$ and  $Z_1$ as described above, so that $(Z, L) \leq (W, T)$ and $(Z, L)$ covers $(Z_1, L')$ and $(N^2, A^2)$.

\textbf{Case 3:} Suppose $A^1 \neq  A^2$ and $i_1 < i_2$. As in the proof of Proposition ~\ref{rao}, we obtain $L = (ii_2 ,j_2) \leq T $ such that $L$ covers $A^2$ and $(i, j_2)$ for some $i_1 \leq i < i_2$. Let $L' = (i, j_2)$. The flags $(Z, L)$ and $(Z_1, L')$ can be obtain as in \textbf{Case 2} by considering the cases when $L_z \leq i$, $i < L_z \leq j_2$ or $L_z > j_2$.
\end{proof}

The following lemma also follows from the thinness of intervals of length $2$ in $\mathrm{MacP}(2, n+1)$, and the identification of an interval in  $\mathrm{MacP}(1,2,n)$ with an interval in $\mathrm{MacP}(2,n+1)$.

\begin{lemma}\label{thin1}
	Every interval of length $2$ in $\mathrm{MacP}(1,2,n)_{\leq (\pm z, M)} \cup \hat{0}$ is thin.
\end{lemma}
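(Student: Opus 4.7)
The plan is to leverage the identification \textbf{SUBP} between $\mathrm{MacP}(1,2,n)_{\geq (N_0, M_0)}$ and $\mathrm{MacP}(2, n+1)_{\geq \iota(N_0, M_0)}$ and then transfer thinness from $\mathrm{MacP}(2, n+1)$, where it has already been established in Lemma \ref{thin}. So the hard work has already been done in the rank $2$ case on $n+1$ elements; what remains is a bookkeeping argument plus a short base case.

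Fix a length-$2$ interval $I = [(N_1, M_1), (N_2, M_2)]$ in $\mathrm{MacP}(1,2,n)_{\leq (\pm z, M)} \cup \{\hat{0}\}$. First I would handle the main case, where $(N_1, M_1) \neq \hat{0}$. Taking $(N_0, M_0) = (N_1, M_1)$ in \textbf{SUBP}, the upper set $\mathrm{MacP}(1,2,n)_{\geq (N_1, M_1)}$ becomes poset-isomorphic to $\mathrm{MacP}(2, n+1)_{\geq \iota(N_1, M_1)}$, and under this isomorphism $I$ is carried to the length-$2$ interval $[\iota(N_1, M_1), \iota(N_2, M_2)]$ inside $\mathrm{MacP}(2, n+1)_{\leq \iota(N_2, M_2)} \cup \{\hat{0}\}$. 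Lemma \ref{thin} applied to this latter augmented poset says that every length-$2$ interval inside it has exactly four elements, and pulling back through the isomorphism gives $|I| = 4$.

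The remaining situation is when $(N_1, M_1) = \hat{0}$, so that $I = [\hat{0}, (N_2, M_2)]$ for some $(N_2, M_2)$ sitting at height $1$ in $\mathrm{MacP}(1,2,n)_{\leq (\pm z, M)}$. Here \textbf{SUBP} is unavailable since it only identifies upper intervals, and we must directly count the atoms of the augmented poset below $(N_2, M_2)$. By Proposition \ref{homeo}, a height-$1$ flag splits into two subcases, $h(M_2) = 1, h_{M_2}(N_2) = 0$ or $h(M_2) = 0, h_{M_2}(N_2) = 1$. A direct inspection in each subcase, modelled on the base-case argument in Lemma \ref{thin}, produces exactly two atoms beneath $(N_2, M_2)$: in the first subcase the two atoms come from the two atoms of $[\hat{0}, M_2]$ in $\mathrm{MacP}(2,n)$ paired with the unique minimal rank $1$ strong map image, and in the second from the two cocircuit-type choices of rank $1$ image of the fixed minimal $M_2$.

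The principal obstacle is this base case, since the elegant reduction via \textbf{SUBP} fails precisely when the bottom of the interval is the formal $\hat{0}$; once the base case is dispatched by the explicit atom count just described, every other length-$2$ interval transfers directly to $\mathrm{MacP}(2, n+1)$ and inherits thinness from Lemma \ref{thin}.
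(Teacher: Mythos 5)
Your reduction via \textbf{SUBP} to Lemma \ref{thin} is exactly the paper's argument. You also correctly flag that \textbf{SUBP} only identifies up-sets of actual flags $(N_0,M_0)$, so intervals of the form $[\hat{0}, (N_2, M_2)]$ require separate treatment; your two-subcase atom count (two cocircuits of the fixed minimal $M_2$ when $h_{M_2}(N_2)=1$, and, when $h(M_2)=1$, the unique cocircuit below $N_2$ in each of the two atoms of $\mathrm{MacP}(2,n)_{\leq M_2}\cup\{\hat{0}\}$) gives the needed four-element conclusion and makes explicit a detail the paper's one-line justification glosses over.
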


Using the above recursive atom ordering and Lemma ~\ref{thin1}, we have the following proposition. 
\begin{prop}
	Let $(N, M) \in \mbox{MacP}(1,2,n)$. The interval $\mathrm{MacP}(1,2,n)_{\leq (N, M)} \cup \{\hat{0}\}$ is an augmented face poset of a regular cell decomposition of a PL sphere.
\end{prop}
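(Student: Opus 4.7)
The plan is to invoke Theorem \ref{bjorner} directly. That theorem states that a bounded poset which is thin and admits a recursive atom ordering is the augmented face poset of a regular cell decomposition of a PL sphere. So I only need to check that the interval $P = \mathrm{MacP}(1,2,n)_{\leq (N,M)} \cup \{\hat{0}\}$ is bounded, thin, and admits a recursive atom ordering.

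Boundedness is immediate: $\hat{0}$ was adjoined as a bottom element, and $(N,M)$ serves as the top element $\hat{1}$. Thinness of every length-$2$ interval in $P$ is precisely the content of Lemma \ref{thin1}, which was obtained by identifying the interval $\mathrm{MacP}(1,2,n)_{\geq (N_0,M_0)}$ with an interval in $\mathrm{MacP}(2,n+1)$ via \textbf{SUBP} and transferring thinness from the rank-$2$ setting established earlier for the poset $\mathrm{MacP}(2,n)$. The existence of a recursive atom ordering on $P$ is precisely Proposition \ref{rec1}, where the atoms $(N_t,A_t),(N_t',A_t)$ were explicitly ordered in dictionary fashion following the ordering of atoms $A_1,\dots,A_k$ of $\mathrm{MacP}(2,n)_{\leq M}\cup\{\hat{0}\}$ from Proposition \ref{rao}, and the upper intervals were shown to satisfy the recursive conditions via total semimodularity (Lemma \ref{lemod}) combined with Theorem \ref{bw}.

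With these three ingredients in hand, the proof is a one-line application of Theorem \ref{bjorner}. There is no real obstacle here since all the substantive work has been done in the preceding propositions and lemmas; this statement is just the combinatorial packaging of Proposition \ref{rec1}, Lemma \ref{thin1}, and Theorem \ref{bjorner}. It plays the same role relative to Section \ref{sec:shell_2} that Proposition \ref{shell} played relative to Section \ref{sec:shell_1}, and will be used together with Proposition \ref{bdry1} and an analogue of Theorem \ref{top} for $\overline{\nu^{-1}(N,M)}$ to conclude that $\overline{\nu^{-1}(N,M)}$ is a closed ball, yielding the regular cell decomposition $\{\nu^{-1}(N,M)\}$ of $\mathrm{Gr}(1,2,\mathbb{R}^n)$ promised by Theorem \ref{main1}.
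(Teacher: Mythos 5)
Your proof is correct and is essentially the same as the paper's: both cite Proposition \ref{rec1} for the recursive atom ordering, Lemma \ref{thin1} for thinness, and then apply Theorem \ref{bjorner}. The extra contextual remarks about how this proposition fits into the larger argument are accurate but not part of the proof itself.
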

\begin{proof}
	The interval $\mathrm{MacP}(1,2,n)_{\leq (N, M)} \cup \{\hat{0}\}$ has a  recursive atom ordering by Proposition \ref{rec1} , and the interval is thin by Lemma \ref{thin1}. Hence by Theorem \ref{bjorner}, the interval $\mathrm{MacP}(1,2,n)_{\leq (N, M)} \cup \{\hat{0}\}$ is an augmented face poset of a regular cell decomposition of a PL sphere.
\end{proof}

\section{Topology of $\overline{\nu^{-1}(N,M)}$} \label{sec:top_2}
\begin{thm}\label{top1}
	Let $(N, M) \in \mbox{MacP}(1,2,n)$. Then the closure $\overline{\nu^{-1}(N,M)}$ is a topological manifold whose boundary is $\bigcup_{(N', M') < (N,M)} \nu^{-1}(N', M').$
\end{thm}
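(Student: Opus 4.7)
The boundary identification $\partial\overline{\nu^{-1}(N,M)} = \bigcup_{(N', M') < (N, M)} \nu^{-1}(N', M')$ is already given by Proposition \ref{bdry1}, so only the local manifold structure along the boundary needs to be established. The plan is to mirror the proof of Theorem \ref{top} on the rank $2$ side: for each point $(Y_0, X_0) \in \partial\overline{\nu^{-1}(N, M)}$, I would construct a closed neighborhood inside $\overline{\nu^{-1}(N, M)}$ homeomorphic to a closed ball of dimension $h(M) + h_M(N)$, with $(Y_0, X_0)$ lying on the boundary of that ball.

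The construction goes through the identification \textbf{ID2}, under which $\nu^{-1}(N, M)$ is the interior of a product of two ``angle simplices'' (with the argument $\Theta$ of the vector $v_{n+1}$ determining $Y$ inserted in its correct slot among the $\theta_{l_k}$) together with a product of half-lines indexed by the parallel/anti-parallel classes of $M$. Writing $(Y_0, X_0) = \mathrm{Rowspace}(e_1\;e_2\;v_3\;\cdots\;v_{n+1})$ with $\|v_{n+1}\|=1$ and using the coordinates $(\theta_j, r_j)$, I would define the model closed polytope
\[
B_{(N,M)} \;=\; \bigl\{0 \le x_1 \le \cdots \le x_{n_1}\le \tfrac{\pi}{2}\bigr\}\times \bigl\{\tfrac{\pi}{2}\le y_1\le\cdots\le y_{n_2}\le\pi\bigr\}\times\prod_j [\tfrac{r_j}{2},\, r_j+1],
\]
with the $\Theta$-coordinate forming one of the $x_k$'s (and $r_{n+1}$ constrained to a suitable interval around $1$), together with the natural evaluation map $T : B_{(N,M)} \to \overline{\nu^{-1}(N, M)}$ sending a tuple of angles/radii to the corresponding subspace-plus-line.

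As in Theorem \ref{top}, $T$ is a homeomorphism from $\mathrm{int}\,B_{(N,M)}$ onto a neighborhood of $(Y_0, X_0)$ in $\nu^{-1}(N, M)$, but fails to be injective on $\partial B_{(N,M)}$, where certain degenerations (a radius $r_j$ going to $0$, or two adjacent angles coalescing — including $\Theta$ coalescing with a neighboring $\theta_{l_k}$) collapse faces together. These identifications have exactly the parallel-like-segment form covered by Theorem \ref{hersh:thm}, as illustrated in Example \ref{ex} and Figure \ref{viz_0}. I would then pass to the quotient $B_{(N,M)}/\!\sim$, invoke Theorem \ref{hersh:thm} to conclude it is still homeomorphic to a closed ball of the correct dimension, and use compactness of $B_{(N,M)}/\!\sim$ together with Hausdorffness of $\overline{\nu^{-1}(N, M)}$ to conclude that the induced injection $T' : B_{(N,M)}/\!\sim \;\to \overline{\nu^{-1}(N, M)}$ is a homeomorphism onto a closed neighborhood of $(Y_0, X_0)$.

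The main obstacle is the identification bookkeeping on $\partial B_{(N,M)}$ involving the extra coordinate $\Theta$: unlike the rank $2$ case, coalescences can now occur either among the $\theta_{l_k}$ (dropping $p_M$) or between $\Theta$ and an adjacent $\theta_{l_k}$ (dropping $h_M(N)$), and both types must be verified to fit the parallel-like framework of Theorem \ref{hersh:thm}. A clean way to avoid doing this from scratch is to use \textbf{SUBP}, which identifies $\mathrm{MacP}(1,2,n)_{\geq (N, M)}$ with $\mathrm{MacP}(2, n+1)_{\geq \iota(N, M)}$: under the embedding $\varphi$ into $\mathrm{Gr}(2, \mathbb{R}^{n+1})$, $\nu^{-1}(N, M)$ sits as the transverse slice $\{r_{n+1}=1\}$ inside $\mu^{-1}(\iota(N, M))$, and the manifold-with-boundary conclusion of Theorem \ref{top} for $\overline{\mu^{-1}(\iota(N,M))}$ can be restricted to this slice. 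Either way, the identifications needed on $\partial B_{(N,M)}$ are of the type already handled in Theorem \ref{top}, and the theorem follows.
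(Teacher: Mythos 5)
Your primary argument — run the proof of Theorem \ref{top} verbatim with $B_M$ replaced by the polytope $B_{(N,M)}$ in which the extra angle $\Theta$ of $v_{n+1}$ is inserted into the first angle simplex, then invoke Proposition \ref{bdry1}, Theorem \ref{hersh:thm}, and compactness/Hausdorffness — is exactly the paper's proof. One tiny note: $r_{n+1}$ does not appear as a free coordinate in $B_{(N,M)}$ because $\|v_{n+1}\|=1$ is already built into the identification \textbf{ID2}, so the parenthetical about constraining $r_{n+1}$ near $1$ is unnecessary; the SUBP ``slice'' alternative is a reasonable extra observation but would need a short transversality argument that the paper does not attempt.
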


\begin{proof}
	Let $(Y,X) \in \partial \overline{\nu^{-1}(N,M)}$. The proof follows as in Theorem \ref{top}, with $B_M$ replaced with the ball
	$$ B_{(N,M)} =  \left \{(x_1, x_2, \ldots \Theta, \ldots , x_{n_1}) : 0 \leq x_1 \leq x_2 \cdots \leq \Theta \cdots \leq  x_{n_1} \leq\frac{\pi}{2}\right \} 
	 \times$$  $$\left\{(y_1, y_2, \ldots, y_{n_2}) : \frac{\pi}{2} \leq y_1 \leq y_2 \leq \cdots \leq y_{n_2}\leq\pi \right\} \times  \prod_{j=1}^{L_M-2} [\frac{r_j}{2}, r_j+1].$$ 
	 To obtain a map $T' : B_{(N, M)}/\sim  \; \rightarrow \overline{\nu^{-1}(N,M)}$. The image $T'(B_{(N, M)}/\sim)$ is the desired neighborhood of the point $(Y, X)$, and $T'(B_{(N,M)}/\sim)$ is homeomorphic to a closed ball.
\end{proof}

\begin{thm}\label{tops}
Let $(N, M) \in \mbox{MacP}(1,2,n)$. There is an homeomorphism from $\overline{\nu^{-1}(N, M)}$ to an $h(M) + h_M(N)$-dimensional closed ball.   
\end{thm}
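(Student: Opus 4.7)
The plan is to mirror the proof of Theorem \ref{cball} essentially step-for-step, since all the analogous ingredients for the flag setting have now been established earlier in the paper. I would proceed by induction on the height $h(M) + h_M(N)$ of $(N,M)$ in $\mathrm{MacP}(1,2,n)$. The base case is trivial: when $h(M) + h_M(N) = 0$ the stratum $\overline{\nu^{-1}(N,M)}$ is a single point.

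For the inductive step, assume the result holds for every $(N',M')$ with $h(M') + h_{M'}(N') < h(M) + h_M(N)$. First, by Proposition \ref{bdry1}, we have
\[\partial\overline{\nu^{-1}(N,M)} = \bigcup_{(N',M')<(N,M)}\nu^{-1}(N',M').\]
The inductive hypothesis together with Proposition \ref{homeo} then implies that this boundary is a regular cell complex whose face poset (augmented with $\hat 0$) is exactly the interval $\mathrm{MacP}(1,2,n)_{\leq (N,M)} \cup \{\hat{0}\}$. By Proposition \ref{rec1} this interval admits a recursive atom ordering, and by Lemma \ref{thin1} it is thin. Theorem \ref{bjorner} therefore identifies $\partial\overline{\nu^{-1}(N,M)}$ with a regular cell decomposition of a $PL$ sphere of dimension $h(M) + h_M(N) - 1$.

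Next, Theorem \ref{top1} asserts that $\overline{\nu^{-1}(N,M)}$ is a topological manifold with boundary equal to the stratification above. Combining this with the previous step, $\overline{\nu^{-1}(N,M)}$ is a topological manifold whose boundary is a $(h(M) + h_M(N) - 1)$-sphere, and whose interior $\nu^{-1}(N,M)$ is an open ball of dimension $h(M) + h_M(N)$ by Proposition \ref{homeo}. Proposition \ref{ball} (the Cantrell-style consequence of the collar neighborhood theorem) then produces the desired homeomorphism to a closed ball of dimension $h(M) + h_M(N)$.

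The real work behind this theorem has already been absorbed into the earlier sections: the hard step is Theorem \ref{top1}, whose proof requires the Hersh-style cell-collapse construction applied to the modified ball $B_{(N,M)}$ with the $\Theta$-slot inserted, and the recursive atom ordering of Proposition \ref{rec1}, which is delicate because the preferred order of the two atoms $(N_t^0, A_t)$ and $(N_t^1, A_t)$ above a given rank-$2$ atom $A_t$ depends on the relative position of the label $L_z$ with respect to $(i,j)$. Given those, the present statement is a clean assembly: identify the boundary combinatorially (Proposition \ref{bdry1}) with the face poset of a shellable sphere (Proposition \ref{rec1}, Lemma \ref{thin1}, Theorem \ref{bjorner}), promote to a topological manifold with boundary (Theorem \ref{top1}), and conclude via Proposition \ref{ball}.
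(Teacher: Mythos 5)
Your proposal is correct and takes exactly the route the paper intends: the paper's own proof is the one-line "The proof follows as in Theorem \ref{cball}," and you have simply unpacked that inductive argument with the correct flag analogues (Propositions \ref{bdry1}, \ref{homeo}, \ref{rec1}, Lemma \ref{thin1}, Theorem \ref{top1}, and Proposition \ref{ball}).
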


\begin{proof}
	The proof follows as in Theorem \ref{cball}.
\end{proof}

\begin{proof}[Proof of Theorem \ref{main1}]
We conclude from Theorem \label{tops} that $\{\nu^{-1}(N, M) : (N, M) \in \mbox{MacP}(1,2,n) \}$ is a regular cell decomposition of $\mathrm{Gr}(1,2,\mathbb{R}^n)$.

Hence, $\|\mathrm{MacP}(1,2,n)\|$ is homeomorphic to $\mathrm{Gr}(1,2,\mathbb{R}^n)$.
\end{proof}

\section{Conclusion}
We have proven that the complexes $\|\mathrm{MacP}(2,n)\|$ and $\|\mathrm{MacP}(1,2,n)\|$ associated to combinatorial Grassmannians have the same heomorphism type as the Grassmannian manifolds $\mathrm{Gr}(2, \mathbb{R}^n)$ and $\mathrm{Gr}(1,2, \mathbb{R}^n)$ respectively. Our argument relies majorly on the realizability of rank $2$ oriented matroids, and the fact that stratas $\mu^{-1}(M)$ and $\nu^{-1}(\pm z, M)$ are homeomorphic to an open ball - the argument thus does not apply to oriented  matroids of ranks at least $3$.

\bibliography{MacP2}


\end{document}